\newcommand{\R}{\mathbb R}
\newcommand{\N}{\mathbb N}
\newcommand{\Z}{\mathbb Z}
\newcommand{\ball}{\mathcal B}
\newcommand{\X}{\bm{\mathcal X}}
\newcommand{\XX}{\mathcal X}
\newcommand{\YY}{\mathcal Y}
\newcommand{\CC}{\mathcal C}
\newcommand{\p}{\bm{p}}
\newcommand{\f}{\bm{f}}
\newcommand{\F}{\bm{F}}
\newcommand{\g}{\bm{g}}
\newcommand{\T}{\bm{T}}
\newcommand{\0}{\bm{0}}
\newcommand{\M}{\bm{M}}
\newcommand{\A}{\bm{A}}
\newcommand{\B}{\bm{B}}
\newcommand{\bu}{\bm{u}}
\newcommand{\bv}{\bm{v}}
\newcommand{\bC}{\bm{C}}
\newcommand{\barX}{\bar{X}}
\newcommand{\br}{\bar r}
\newcommand{\brho}{\bar\rho}
\newcommand{\bX}{\bm{X}}
\newcommand{\cA}{{\mathcal A}}  
\newcommand{\cB}{{\mathcal B}}  
\newcommand{\cF}{{\mathcal F}}  
\def\txtd{{\textnormal{d}}}
\def\txtD{{\textnormal{D}}}
\def\ra{\rightarrow}
\def\I{\infty}
\newcommand{\fC}{\mathfrak{C}}
\definecolor{Blue}{RGB}{55,65,180}
\definecolor{Red}{RGB}{200,0,0}
\definecolor{Green}{RGB}{0,120,0}
\theoremstyle{plain}
\newtheorem{theorem}{Theorem}[section]
\newtheorem{proposition}[theorem]{Proposition}
\newtheorem{lemma}[theorem]{Lemma}
\newtheorem{corollary}[theorem]{Corollary}
\newtheorem{remark}[theorem]{Remark}
\newtheorem{definition}[theorem]{Definition}
\DeclareFontFamily{U}{matha}{\hyphenchar\font45}
\DeclareFontShape{U}{matha}{m}{n}{
      <5> <6> <7> <8> <9> <10> gen * matha
      <10.95> matha10 <12> <14.4> <17.28> <20.74> <24.88> matha12
      }{}
\DeclareSymbolFont{matha}{U}{matha}{m}{n}
\DeclareFontFamily{U}{mathx}{\hyphenchar\font45}
\DeclareFontShape{U}{mathx}{m}{n}{
      <5> <6> <7> <8> <9> <10>
      <10.95> <12> <14.4> <17.28> <20.74> <24.88>
      mathx10
      }{}
\DeclareSymbolFont{mathx}{U}{mathx}{m}{n}
\DeclareMathDelimiter{\VERT}{0}{matha}{"7E}{mathx}{"17}
\title{Rigorous Validation of Stochastic Transition Paths}
\author{Maxime Breden \thanks{Technical University of Munich, 
Faculty of Mathematics, Research Unit ``Multiscale and Stochastic 
Dynamics", 85748 Garching b. M\"unchen, Germany. \texttt{maxime.breden@tum.de}} 
\and Christian Kuehn \thanks{Technical University of Munich, Faculty 
of Mathematics, Research Unit ``Multiscale and Stochastic Dynamics", 
85748 Garching b. M\"unchen, Germany. \texttt{ckuehn@ma.tum.de}}}
\begin{document}

\maketitle

\begin{abstract}
Global dynamics in nonlinear stochastic systems is often difficult to analyze 
rigorously. Yet, many excellent numerical methods exist to approximate these
systems. In this work, we propose a method to bridge the gap between computation
and analysis by introducing rigorous validated computations for stochastic 
systems. The first step is to use analytic methods to reduce the stochastic
problem to one solvable by a deterministic algorithm and to numerically compute
a solution. Then one uses fixed-point arguments, including a combination of 
analytical and validated numerical estimates, to prove that the computed solution
has a true solution in a suitable neighbourhood. We demonstrate our approach by
computing minimum-energy transition paths via invariant manifolds and heteroclinic
connections. We illustrate our method in the context of the classical M\"uller-Brown 
test potential.
\end{abstract}

\begin{center}
\begin{small}
\textbf{Keywords:} validated computation, rigorous numerics, 
stochastic dynamics, fixed-point problem,
minimum-energy path, heteroclinic orbit, invariant manifolds.
\end{small}
\end{center}

\tableofcontents

\section{Introduction}

\subsection{Validated computations for stochastic systems: general strategy}

Steady states, connecting orbits and more general invariant sets are 
key objects to understand the long time behavior of a deterministic 
dynamical system. There are many theoretical existence results, but 
these results are often non-quantitative or difficult to apply, 
specially when the system is highly nonlinear. These limitations 
motivated the development of a field now called rigorous numerics. Its
goal is to use computer-assisted verification in combination with 
analytic estimates to prove theorems. For example, it has turned out
that the existence of many chaotic attractors could only be established 
with computer-assisted methods~\cite{Haiduc1,Tucker}. For detailed 
reviews about the history and applications of rigorous numerics, we refer 
to~\cite{MirMis15,Rum10,Tuc11,BerLes15} and the references therein. 

Understanding the global behavior of a stochastic nonlinear dynamical system 
is even harder. The difficulties dealing with global dynamics induced by 
the deterministic nonlinear part remain, and the noise usually adds another 
layer of technical challenges. Nevertheless, it is frequently possible to 
reduce particular questions to the study of associated deterministic systems. 
Some examples include action functionals in large deviation 
theory~\cite{FreWen13,Varadhan}, variance estimates for sample 
paths~\cite{BerglundGentz}, pathwise descriptions of stochastic 
equations~\cite{Lyons}, moment closure methods~\cite{KuehnMC} 
and partial differential equation (PDE) methods 
for stochastic stability~\cite{ArnoldMarkowichToscaniUnterreiter,Khasminskii1}. 
However, if the original stochastic system has nonlinear terms, an associated 
deterministic reduced problem one will usually also contain nonlinear terms.

In this work, we propose to combine the two strategies outlined above, i.e., to 
first use deterministic reduction and then employ rigorous a-posteriori validation 
to obtain information about the global behavior of stochastic dynamical systems.

\subsection{Validated computations for stochastic systems: possible applications}

To illustrate the feasibility of our approach, we study a particular example. Here we only outline our strategy and we refer for more technical background to Section~\ref{sec:background}.

Consider a metastable stochastic gradient system~\cite{BovierdenHollander} 
with potential $V$ modelled by a stochastic ordinary differential equation 
(SODE) with additive noise. The minima of $V$ correspond to deterministically
stable, and for small noise stochastically metastable, states. Noise-induced 
transitions between minima are known to occur most frequently along 
minimum energy paths (MEPs) of a deterministic action functional. Many 
efficient numerical methods have been developed, particularly in the context
of applications in chemistry, to compute 
MEPs~\cite{ERenVanden-Eijnden,HenUbeJon00,SheppardTerellHenkelman}. 
For our example, MEPs can also be characterized as concatenations of heteroclinic 
connections~\cite{DoedelFriedman,GH} between the minima and intermediate saddle points. 
The main steps that we use in this paper to rigorously validate a MEP are: (I) locate 
and validate saddles and minima, (II) introduce an equivalent polynomial 
vector field, and (III) validate heteroclinic orbits for this polynomial vector 
field. Step (III) is more involved and can be decomposed in three sub-steps:
(IIIa) compute and validate the local unstable manifold of the saddles, (IIIb) 
validate trapping regions around the minima, and (IIIc) compute and validate 
orbits connecting unstable manifolds and trapping regions. The output of the
steps (I)-(III) is a theorem providing the existence and detailed quantitative
location of the MEP. We use the standard M\"uller-Brown potential as a test
case to present these steps.

\begin{remark}
We aim at making the paper as accessible as possible to a broad audience, therefore we choose to focus on using the steps (I)-(III) for the M\"uller-Brown potential, which limits the amount of technicalities required, but emphasize that our framework is of course not limited to this specific example. We also mentioned that the framework itself could be broadened, by using different rigorous numerics techniques than the one presented in full details here (see Section~\ref{sec:background_validation}), and by tailoring the steps (I)-(III) (and in particular step (II)) to the problem at hand. We expand on these possible generalizations of our work in Section~\ref{sec:generalizations}.
\end{remark} 

While we focus on the case of MEPs in this paper, our \emph{general strategy} of reducing a stochastic system to a deterministic one and then studying this (nonlinear) deterministic system using rigorous numerics can find applications in many other stochastic contexts. For instance, the mean first passage time of an SODE to a prescribed boundary satisfies a PDE~\cite{FreWen13}, which could be studied using rigorous numerics. Another example is given by recently developed methods, which 
numerically calculate the  local fluctuations in stochastic systems 
around deterministic equilibria by using matrix 
equations~\cite{KuehnSDEcont1,KuehnSPDEcont}, and for which rigorous numerics would be directly applicable. In a different direction, we mention the recent work~\cite{GalMonNis17}, where the existence of noise induced order is established, using computer-assisted techniques based on transfer operators.

\subsection{Main contributions and outline}
Our main contributions in this work are the 
following:

\begin{itemize}
 \item We establish the first link between stochastic dynamics and rigorous a posteriori validation techniques.
 \item We apply our methodology to the case of MEPs in metastable stochastic
system.
 \item We demonstrate the feasibility using the standard test case
of the M\"uller-Brown potential.
 \item We also improve validation methods themselves by developing a priori 
almost optimal weights for the function spaces employed (see 
e.g.~Proposition~\ref{prop:frob}).
\end{itemize}

The paper is structured as follows: In Section~\ref{sec:background} we collect
different background from large deviation theory, from dynamical systems, from 
available algorithmic approaches to compute MEPs, and from rigorous numerics.
Although these techniques are certainly well-known within certain communities,
it seems useful to us to briefly introduce them here as we establish new links
between several fields. In Section~\ref{sec:main_steps} we provide the basic
setting for the rigorous validation to be carried out. In 
Sections~\ref{sec:manifold}-\ref{sec:connection} we carry out the technical
estimates and required computations for step (III) regarding the local invariant 
manifolds and connecting orbits. We conclude in Section~\ref{sec:conclusions} by presenting the output of the whole procedure for the M\"uller-Brown potential, and by mentioning possible extensions of this procedure. The required code for the validated numerics
can be found at~\cite{BreKue18_code}.   

\section{Background}
\label{sec:background}

\subsection{Large Deviation Theory and Minimum Energy Paths}
\label{sec:background_stochastics}

Consider a smooth potential $V:\R^n \to\R$ with two local minima $m_{1*}$ 
and $m_{2*}$, separated by a saddle $s_*$. The gradient flow ordinary 
differential equation (ODE) induced by the potential is 
\begin{equation}
\label{eq:ODE}
x'=\frac{\txtd x}{\txtd t}=-\nabla V(x),\qquad x=x(t)\in\R^n,~x(0)=x_0. 
\end{equation}
Observe that $m_{1*}$ and $m_{2*}$ are locally asymptotically stable equilibria for~\eqref{eq:ODE} while $s_*$ is unstable. Since the Jacobian $\txtD (-\nabla V)$ of the vector field is the (negative) Hessian of $V$, it follows that $-\txtD^2 V$ has only real eigenvalues. For the exposition here, we assume that $n\geq 2$ and $s_*$ is a saddle point with a single unstable direction,
i.e., $-\txtD^2V(s_*)$ has a single positive eigenvalue. Denote by $\cB_1$ and $\cB_2$ the basins of attraction of $m_{1*}$ and $m_{2*}$. If $x_0\in \cB_1$ (resp.~$x_0\in\cB_2$) then $x(t)$ converges to $m_{1*}$ (resp.~to $m_{2*}$) as 
$t\ra +\I$. In particular there is no solution that connects $m_{1*}$ and 
$m_{2*}$. For systems under the influence of noise, the situation is
different. Let $(\Omega,\cF,\cF_t,\mathbb{P})$ be a filtered probability space 
and consider a vector $W(t)=W=(W_1,W_2,\ldots,W_n)^\top$ of independent
identically distributed (iid) Brownian motions. Introducing additive noise 
in~\eqref{eq:ODE} gives the SODE
\begin{equation}
\label{eq:SDE}
\txtd x=-\nabla V(x)~\txtd t +\varepsilon~\txtd W,
\end{equation}
where $\varepsilon$ is a small parameter ($0<\varepsilon\ll 1$). \emph{Noise-induced
transitions} occur for~\eqref{eq:SDE}, e.g., fixing small neighbourhoods of
$m_{1*}$ and $m_{2*}$, there exist sample paths $\gamma=\gamma(t)$ 
solving~\eqref{eq:SDE} meeting both neighbourhoods in finite time. For the 
gradient system we considered here, it is well-known that the most 
probable path $\gamma$, also called \emph{minimum energy path (MEP)}, passes 
through the saddle point $s_*$. Also the 
mean-first passage time is well-understood via the classical 
Arrhenius-Eyring-Kramers law~\cite{Arrhenius,Berglund3,BovEckGayKle04,BovGayKle05,Eyr35,Kramers,LelNie15},
which states that transition probabilities are exponentially small in
the noise level $\varepsilon$ as $\varepsilon\ra 0$. A general framework
for the small-noise case is provided by \emph{large deviation 
theory}~\cite{FreWen13,Varadhan}, which can be formulated for more general
SODEs such as 
\begin{equation}
\label{eq:SDE1}
\txtd X=f(X)~\txtd t +\varepsilon~\txtd W,
\end{equation}
where $f$ is a sufficiently smooth vector field. Let $\phi:[0,T]\ra \R^n$
be an absolutely continuous path over fixed time $T>0$ and define the 
\emph{action} (or \emph{action functional})
\begin{equation}
\label{eq:FW}
S_T(\phi)=\frac{1}{2}\int_0^T L(\phi,\phi')~\txtd t,
\end{equation}
where the Lagrangian $L$ is given by
\begin{equation}
\label{eq:FW1}
L(\phi,\phi')=\|\phi'-f(\phi)\|_2^2.
\end{equation}
We can think of $S_T(\phi)$ as the energy spent following a solution $\phi$
against the deterministic ($\varepsilon=0$) flow for~\eqref{eq:SDE1}. Then
Wentzell-Freidlin theory~\cite{FreWen13} states that the probability of
a solution $X=X(t)$ to pass through a $\delta$-tube around 
$\phi(t)$ for $t\in[0,T]$ is given by
\begin{equation*}
\mathbb{P}\left(\sup_{t\in[0,T]}\|X-\phi\|_2<\delta\right)\approx 
\exp\left(-\frac{1}{\varepsilon^2} S_T(\phi)\right)
\end{equation*}
for sufficiently small $\varepsilon$ and $\delta$;~cf.\cite[Sec.~3, Thm.~2.3]{FreWen13}. More generally, one can just take any set $\cA\subset \cF$ of random
events and obtain the large deviation principle
\begin{equation*}
\lim_{\varepsilon\ra 0}\varepsilon^2\ln\left(\mathbb{P}\left(X\in\cA\right)\right)= 
-\inf_{\phi\in\cA}S_T(\phi).
\end{equation*}
Note carefully that this formulation has converted a stochastic problem
of calculating/estimating a probability into a deterministic optimization
problem. 

If we want to specify the transition problem between
two points $x_0$ and $x_1$, we should set
\begin{equation*}
\cA=\{X(0)=x_0,\,X(T)=x_1\},
\end{equation*}
and consider
\begin{equation}
\label{eq:minimization}
\inf_{\substack{\phi\in\cA \\ T>0 }}S_T(\phi).
\end{equation}
If a minimizer $\phi_*$ of the action functional exists, one refers to $\phi_*$ as a minimum action path (MAP). We reserve MEP for 
the case of a minimizer in gradient systems for a transition between states. 

In practice it is often more important to know the geometrical path described by $\phi_*$ (i.e. the set $\{\phi_*(t)\,|\, t\in[0,T]\}$) rather than the function $\phi_*$ itself. One can then consider the \emph{geometric action functional}~\cite{HeyVan08bis}
\begin{equation}
\label{eq:GAF}
\tilde{S}_T(\phi)=\int_0^T \Big(\left\Vert \phi' \right\Vert_2 \left\Vert f(\phi) \right\Vert_2 - \langle \phi',f(\phi) \rangle\Big) \txtd t,
\end{equation}
which is only sensitive to the path described by $\phi$ but not to the function $\phi$ itself. More precisely, for any time reparametrization $\sigma:[0,T]\to[0,\tau]$ preserving orientation, one has $\tilde{S}_T(\phi) = \tilde{S}_\tau(\phi\circ\sigma)$. Besides, the geometric action functional can also be used to characterize MAPs/MEPs since
\begin{equation}
\label{eq:eq_inf}
\inf_{\substack{\phi\in\cA \\ T>0 }}S_T(\phi) = \inf_{\substack{\phi\in\cA \\ T>0 }} \tilde S_T(\phi) = \inf_{\phi\in\cA} \tilde S_1(\phi).
\end{equation}
Indeed, first notice that, using 
\begin{equation}
\label{eq:ab}
\frac{1}{2}\left(\left\Vert \phi' \right\Vert_2^2 + \left\Vert f(\phi) \right\Vert_2^2\right) \geq \left\Vert \phi' \right\Vert_2 \left\Vert f(\phi) \right\Vert_2,
\end{equation}
we get $\tilde S_T(\phi) \leq S_T(\phi)$, hence 
\begin{equation}
\label{eq:ineq_inf}
\inf_{\substack{\phi\in\cA \\ T>0 }} \tilde S_T(\phi) \leq  \inf_{\substack{\phi\in\cA \\ T>0 }} S_T(\phi).
\end{equation}
By considering a reparametrization $\sigma$ such that $\left\Vert (\phi\circ\sigma)' \right\Vert_2 = \left\Vert f(\phi\circ\sigma) \right\Vert_2$ we have that~\eqref{eq:ab} is an equality for $\phi\circ\sigma$, therefore $\tilde S_T(\phi) = \tilde S_\tau(\phi\circ\sigma) = S_T(\phi\circ\sigma)$ which together with~\eqref{eq:ineq_inf} yields the first equality in~\eqref{eq:eq_inf}. The second equality is a direct consequence of the invariance of $\tilde S$ by time reparametrization.

For gradient systems, the geometric action functional also provides another useful characterization of MEPs. Consider for instance the transition between $m_{1*}$ and $s_*$ for~\eqref{eq:SDE}, i.e. set
\begin{equation*}
\cA=\{X(0)=m_{1*},\,X(T)=s_*\}.
\end{equation*}
We then have
\begin{align*}
\tilde{S}_T(\phi) & =\int_0^T \Big(\left\Vert \phi' \right\Vert_2 \left\Vert \nabla V(\phi) \right\Vert_2 + \langle \phi',\nabla V(\phi) \rangle\Big) \txtd t \\
&\geq 2\int_0^T \langle \phi',\nabla V(\phi) \rangle \txtd t \\
&= 2\left(V(s_*)-V(m_{1*})\right),
\end{align*}
where the inequality becomes an equality if $\phi'(t)$ and $\nabla V(\phi(t))$ are positively collinear for all $t\in[0,T]$.

It is
usually not possible to calculate MAPs/MEPs analytically due to the
need to solve a nonlinear deterministic problem. However, there are many
numerical methods, mainly motivated by chemical applications, and mostly based on the geometric action potential. For MEPs 
in gradient systems, there is the classical nudged elastic band 
method~\cite{HenUbeJon00} as well as the string 
method~\cite{WeiRenVan07}. Both methods start with an initial 
piecewise-linear path in the phase space $\R^n$. Making use of the above characterization, the path is then evolved so that $\phi'(t)$ and $\nabla V(\phi(t))$ align. Both methods show often quite similar performance 
as just the re-meshing strategies differ~\cite{SheppardTerellHenkelman}. In cases when the final state $m_{2*}$ is not known a priori, the dimer method~\cite{HenJon99} can be used. Of course, one can also directly discretize the action and try to
minimize it, which leads to another natural set of numerical 
methods~\cite{ERenVanden-Eijnden3}. Another option is to look at the Euler-Lagrange equation associated to the geometrical action path~\cite{HeyVan08}. Finally, one can also numerically
compute MAPs/MEPs by noticing that in many situations they are (concatenations 
of) orbits for the deterministic system for $\varepsilon=0$. For 
example, consider again the gradient system ODE~\eqref{eq:ODE}  
with the three critical points $m_{1*}$, $s_*$, $m_{2*}$, and assume we are interested in the transition going from $m_{1*}$ to $m_{2*}$ via $s_*$. Then the path associated to
\begin{equation}
\phi_*=\{m_{1*}\}\cup \gamma_{s_*m_{1*}}\cup \{s_*\}\cup \gamma_{s_*m_{2*}}\cup \{m_{2*}\},
\end{equation}  
where $\gamma_{s_*m_{1*}}$ is a heteroclinic orbit from $s_*$ to $m_{1*}$  
and $\gamma_{s_*m_{2*}}$ is a heteroclinic orbit from $s_*$ to $m_{2*}$, i.e.,
\begin{equation}
\lim_{t\ra -\I}\gamma_{s_*m_{1*}}(t)=s_*,\quad 
\lim_{t\ra +\I}\gamma_{s_*m_{1*}}(t)=m_{1*},\quad 
\lim_{t\ra -\I}\gamma_{s_*m_{2*}}(t)=s_*,\quad 
\lim_{t\ra +\I}\gamma_{s_*m_{2*}}(t)=m_{2*},
\end{equation}
is a MEP. Of course, the heteroclinic solution going from $s_*$ to $m_{1*}$ does not truly describe the transition, as it goes in the wrong direction. However the function $\tilde \gamma_{s_*m_{1*}}$ defined by $\tilde \gamma_{s_*m_{1*}}(t)=\gamma_{s_*m_{1*}}(-t)$ does, as it satisfies $\tilde \gamma_{s_*m_{1*}}'= \nabla V(\tilde \gamma_{s_*m_{1*}})$ and therefore minimizes the geometric action. Since the path associated to $\tilde \gamma_{s_*m_{1*}}$ and $\gamma_{s_*m_{1*}}$ is the same, the path associated to $\phi_*$ is indeed a MEP.
Hence,
a MEP algorithm can also be built around calculating saddles and heteroclinic
connections, which is the strategy we pursue here using rigorous numerics, i.e, 
we turn a numerical computation into a rigorous proof including error estimates
for the location of the MEP.

\subsection{Rigorous A-Posteriori Validation Methods}
\label{sec:background_validation}

First, let us briefly mention that there are roughly two types of rigorous numerics techniques. The first kind are often described as \emph{geometric} or \emph{topological} methods. They aim at controlling the numerical solution directly in phase space, and are based on shadowing techniques~\cite{KocPalCoo07}, covering relations~\cite{ZglGid04}, cone conditions~\cite{Zgl09} and rigorous integration of the flow (see for instance~\cite{BerMak98,Zgl08}). The second kind are referred to as \emph{functional analytic} methods and aim at validating a posteriori the numerical solution by a fixed point argument (see~\cite{Ces63,Ura65} for some seminal works in this direction, and~\cite{DayLesMis07,Nak01,Plu01} for some more recent references). 

In this paper, we are going to use a functional analytic approach to validate numerically obtained MEPs by rigorously computing a chain of heteroclinic orbits. Let us mention that this is far from being the first time that rigorous numerics are used to study connecting orbits, and that there is a rapidly growing literature on the subject, see e.g.~\cite{AriKoc15,
BreHorMcKPlu06,CooKocPal07,Mir17,She18,BerBreLesMur18,WilZgl03}.

Given a numerical solution $\bar X$, the main idea of this approach is to construct an operator $T$ such that:
\begin{itemize}
\item fixed points of $T$ correspond to \emph{genuine} solutions;
\item $T$ is locally contracting, in a neighborhood of $\bar X$.
\end{itemize}
If such an operator can be defined, one can then hope to apply a fixed point argument, to show that $T$ has a fixed point in a neighborhood of $\bar X$. If this procedure is successful, we say that we have \emph{validated} the numerical solution $\bar X$, since we have proven the existence of a genuine solution close to $\bar X$. The following theorem gives some sufficient conditions to validate a numerical solution $\bar X$. Similar statements can be found in~\cite{AriKoc10,DayLesMis07,Plu01,Yam98} and in many subsequent works.

\begin{theorem}
\label{th:T}
Let $\left(\XX, \left\Vert\cdot  \right\Vert\right)$ be a Banach space and let $T:\XX\to\XX$ be a map of class $\CC^1$. Consider $\barX\in\XX$ and assume there exists a non negative constant $Y$ and a non negative, increasing and continuous function $Z$ such that
\begin{subequations}
\label{def:bounds_T}
\begin{align}
\label{def:Y_T}
\left\Vert T(\barX)-\barX\right\Vert &\leq Y \\
\label{def:Z_T}
\left\Vert \txtD T(X)\right\Vert &\leq Z\left(\Vert X-\barX \Vert\right),\quad\forall X\in\XX.
\end{align}
\end{subequations}
If there exists $\br>0$ such that
\begin{subequations}
\label{cond:r_T}
\begin{align}
\label{cond:rY_T}
Y+\int_0^{\br} Z(s)~\txtd s &<\br \\
\label{cond:rZ_T}
Z(\br) &<1
\end{align}
\end{subequations}
then $T$ has a unique fixed point in $\ball(\barX,\br)$, the closed ball of radius $\br$ centered at $\barX$.
\end{theorem}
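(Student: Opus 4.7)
The plan is a direct application of the Banach fixed-point theorem, with the two hypotheses~\eqref{cond:rY_T}-\eqref{cond:rZ_T} tailored precisely to yield self-mapping of the ball $\ball(\barX,\br)$ and contractivity on that ball, respectively.

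First, I would use the $\CC^1$ regularity of $T$ together with the fundamental theorem of calculus along line segments (the Banach-space mean value inequality). For any $X_1,X_2\in\ball(\barX,\br)$, set $\gamma(t)=\barX+t(X_i-\barX)$ or $\gamma(t)=X_2+t(X_1-X_2)$ as needed, and write $T(X_1)-T(X_2)=\int_0^1 \txtD T(\gamma(t))(X_1-X_2)\,\txtd t$. Since $\gamma(t)\in\ball(\barX,\br)$ by convexity, and $Z$ is increasing, one obtains $\left\Vert \txtD T(\gamma(t))\right\Vert\leq Z(\br)$, hence
\begin{equation*}
\left\Vert T(X_1)-T(X_2)\right\Vert \leq Z(\br)\left\Vert X_1-X_2\right\Vert.
\end{equation*}
By~\eqref{cond:rZ_T}, $T$ is a strict contraction on $\ball(\barX,\br)$, which already yields uniqueness of a fixed point in that ball.

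Next, I would establish self-mapping $T(\ball(\barX,\br))\subset\ball(\barX,\br)$, which is the only part where the full integral condition~\eqref{cond:rY_T} is used. For $X\in\ball(\barX,\br)$, the triangle inequality gives
\begin{equation*}
\left\Vert T(X)-\barX\right\Vert \leq \left\Vert T(X)-T(\barX)\right\Vert + \left\Vert T(\barX)-\barX\right\Vert \leq \left\Vert T(X)-T(\barX)\right\Vert + Y.
\end{equation*}
Applying the mean value integral along the segment from $\barX$ to $X$ and changing variables via $s=t\left\Vert X-\barX\right\Vert$, I get
\begin{equation*}
\left\Vert T(X)-T(\barX)\right\Vert \leq \int_0^1 Z\bigl(t\left\Vert X-\barX\right\Vert\bigr)\left\Vert X-\barX\right\Vert\,\txtd t = \int_0^{\left\Vert X-\barX\right\Vert} Z(s)\,\txtd s \leq \int_0^{\br} Z(s)\,\txtd s,
\end{equation*}
using monotonicity of $Z$ at the last step. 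Combining with~\eqref{cond:rY_T} yields $\left\Vert T(X)-\barX\right\Vert<\br$, so $T(X)\in\ball(\barX,\br)$.

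With self-mapping and strict contractivity on the closed (hence complete) ball $\ball(\barX,\br)$, the Banach fixed-point theorem delivers a unique fixed point in $\ball(\barX,\br)$. There is no real obstacle here; the only slightly delicate point is making sure the mean value step is formulated correctly in a general Banach space, for which one either invokes the Bochner integral representation of $T(X_1)-T(X_2)$ or, to avoid integration of vector-valued functions, applies the scalar mean value theorem to $t\mapsto \langle T(\gamma(t)),\ell\rangle$ for arbitrary $\ell$ in the dual and takes a supremum over $\left\Vert\ell\right\Vert\leq 1$.
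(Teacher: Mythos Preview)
Your proof is correct and follows essentially the same approach as the paper's: both establish that $T$ maps $\ball(\barX,\br)$ into itself via the mean-value integral along the segment from $\barX$ to $X$ combined with~\eqref{cond:rY_T}, and that $T$ is a contraction on this ball via~\eqref{cond:rZ_T}, then invoke Banach's fixed-point theorem. The only cosmetic difference is the order in which you present the two verifications.
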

\begin{proof}
We show that the conditions in~\eqref{cond:r_T} imply that $T$ is a contraction on $\ball(\barX,\br)$, which allows us to conclude by Banach's fixed point theorem. For $X\in \ball(\barX,\br)$, we estimate
\begin{align*}
\Vert T(X)-\barX \Vert &\leq \Vert T(X)-T(\barX) \Vert + \Vert T(\barX)-\barX \Vert \\
&\leq \int_0^1 \Vert \txtD T\left(\barX +t(X-\barX)\right)\Vert \Vert X-\barX\Vert~\txtd t + Y \\
&\leq \int_0^1 Z\left(t\Vert X-\barX\Vert\right) \Vert X-\barX\Vert ~\txtd t + Y \\
&\leq \int_0^1 Z\left(t\br\right) \br~\txtd t + Y \\
&= \int_0^{\br} Z\left(s\right)~\txtd s + Y,
\end{align*}
where we used to fact that $Z$ is increasing to get the last inequality. Therefore \eqref{cond:rY_T} yields that $T$ maps $\ball(\barX,\br)$ into itself, and using~\eqref{cond:rZ_T} we have that $T$ is indeed contracting on $\ball(\barX,\br)$.
\end{proof}

Before going further, let us make some comments about Theorem~\ref{th:T}. While it is a purely theoretical statement, we use it in a context where its application requires the help of the computer at several levels. Firstly, a computer is often mandatory to obtain an approximate solution. Secondly, the definition of $T$ will depend on $\bar X$, as one can only hope to get a contraction close to the solution, and thus the bounds $Y$ and $Z$ that we are going to derive will depend on $\bar X$ (this will become evident in Sections~\ref{sec:manifold} and~\ref{sec:connection}). To be precise, obtaining formulas for $Y$ and $Z$ satisfying~\eqref{def:bounds_T} is done by classical \emph{pen and paper} estimates, but these formulas will involve $\bar X$ in various ways, and thus their evaluation in order to check~\eqref{cond:r_T} requires a computer. To make a mathematically rigorous existence statement out of a numerical computation, one must necessarily control two types of errors:
\begin{itemize}
\item discretization or truncation errors, between the original problem and the finite dimensional approximation that the computer is working with;
\item round-off errors, coming from the fact that a computer can only use a finite subset of the real numbers.
\end{itemize}
The first issue is the most crucial one, and truncation errors must be estimated explicitly when deriving the bounds~\eqref{def:bounds_T} (again see Sections~\ref{sec:manifold} and~\ref{sec:connection} for explicit examples of such bounds). The issue of round-off errors can be handled quite easily thanks to the existence of many software packages and libraries which support interval arithmetic. In this work, we make use of \textsc{Matlab} with the \textsc{Intlab} package~\cite{Rum99}. We point out that one of the advantages of functional analytic methods is that they usually allow for the numerical solution to be computed with standard floating-point arithmetic, and only require interval arithmetic to evaluate the validation bounds, i.e.~to numerically but rigorously check~\eqref{cond:r_T}. Finally, let us point out that Theorem~\ref{th:T} provides a \emph{quantitative} existence statement, in the sense that the solution is proven to exists within an explicit distance, given by the validation radius $\br$ (which in practice is very small), of the known numerical solution $\barX$.

\begin{remark}
In Theorem~\ref{th:T}, assumption~\eqref{cond:rY_T} is in fact enough to ensure the existence of a locally unique fixed point of $T$ in a ball $\ball(\barX,\tilde r)$, for some $\tilde r \leq \br$. Indeed, first notice that~\eqref{cond:rY_T} implies that $Z(0)<1$. We then distinguish two cases. Either $Z(\br)<1$ and we are done. Or $Z(\br)\geq 1$, in which case there exists a unique $r_0\in(0,\br]$ such that $Z(r_0)=1$. We then consider the continuous function $P$ defined (for $r\geq 0$), by
\begin{equation*}
P(r)=Y+\int_0^r Z(s)~\txtd s -r,
\end{equation*}
which is negative at $r=\br$ by assumption~\eqref{cond:rY_T}. Since $P$ reaches its minimum at $r=r_0$, we also have $P(r_0)\leq P(\br)<0$. By continuity, there exists $\tilde r<r_0$ such that $P(\tilde r)<0$, and by definition of $r_0$ we also have $Z(\tilde r)<1$. Therefore, the assumptions~\eqref{cond:r_T} do indeed hold with $\tilde r$ instead of $\br$, which yields the existence of a unique fixed point of $T$ in $\ball(\barX,\tilde r)$.
\end{remark}

\section{The main steps}
\label{sec:main_steps}

We recall the main steps we use to validate a MEP:
\begin{enumerate}[label=(\Roman*)]
\item Locate and validate saddles and minima,
\item Introduce an equivalent polynomial vector field,
\item Validate heteroclinic orbits for this polynomial vector field:
\begin{enumerate}[label=(\alph*)]
\item Compute and validate the local unstable manifold of the saddles,
\item Validate trapping regions around the minima,
\item Compute and validate orbits connecting unstable manifolds and trapping regions.
\end{enumerate}
\end{enumerate}

In this section, we outline what is required for each step. While the method introduced is rather general, we feel that the exposition is made clearer by focusing on an explicit example. Therefore, we consider for the remainder of the paper the M\"uller-Brown potential~\cite{MulBro79}, which is a well-known test case for numerical methods in theoretical chemistry. It is given by

\begin{equation*}
V(x,y)=\sum_{i=1}^4 \alpha^{(i)} \exp\left(a^{(i)}\left(x-x^{(i)}_0\right)^2+b^{(i)}\left(x-x^{(i)}_0\right)\left(y-y^{(i)}_0\right)+c^{(i)}\left(y-y^{(i)}_0\right)^2\right),
\end{equation*}
$V:\R^2\ra \R$ where a standard set of parameters is
\begin{align*}
\alpha=(-200, -100, -170, 15),\quad x_0=(1, 0, -0.5, -1),\quad y_0=(0, 0.5, 1.5, 1) \\
a=(-1, -1, -6.5, 0.7),\quad b=(0, 0, 11, 0.6),\quad c=(-10, -10, -6.5, 0.7).
\end{align*}

\begin{figure}[h!]
\includegraphics[width=0.5\linewidth]{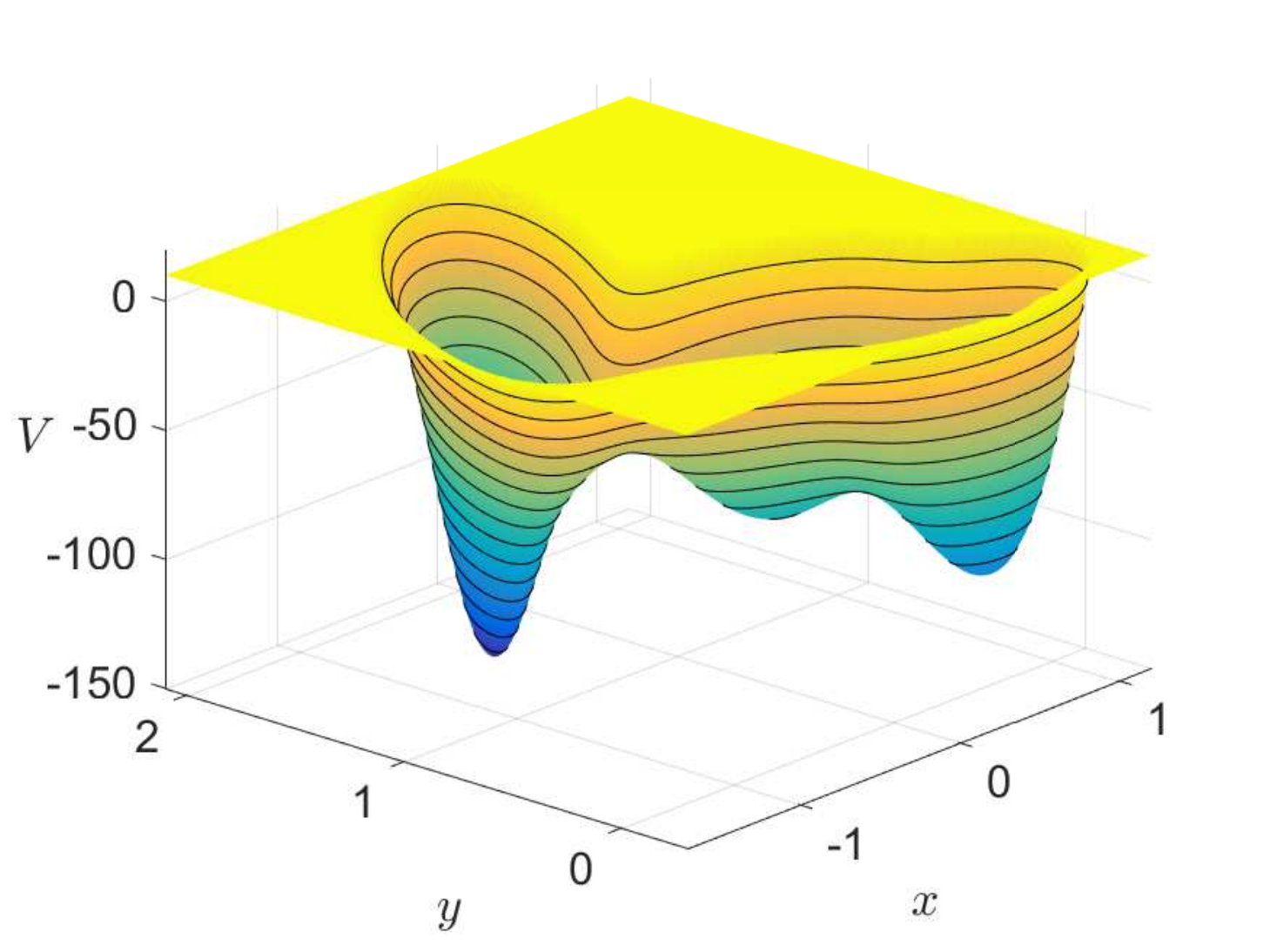}
\includegraphics[width=0.5\linewidth]{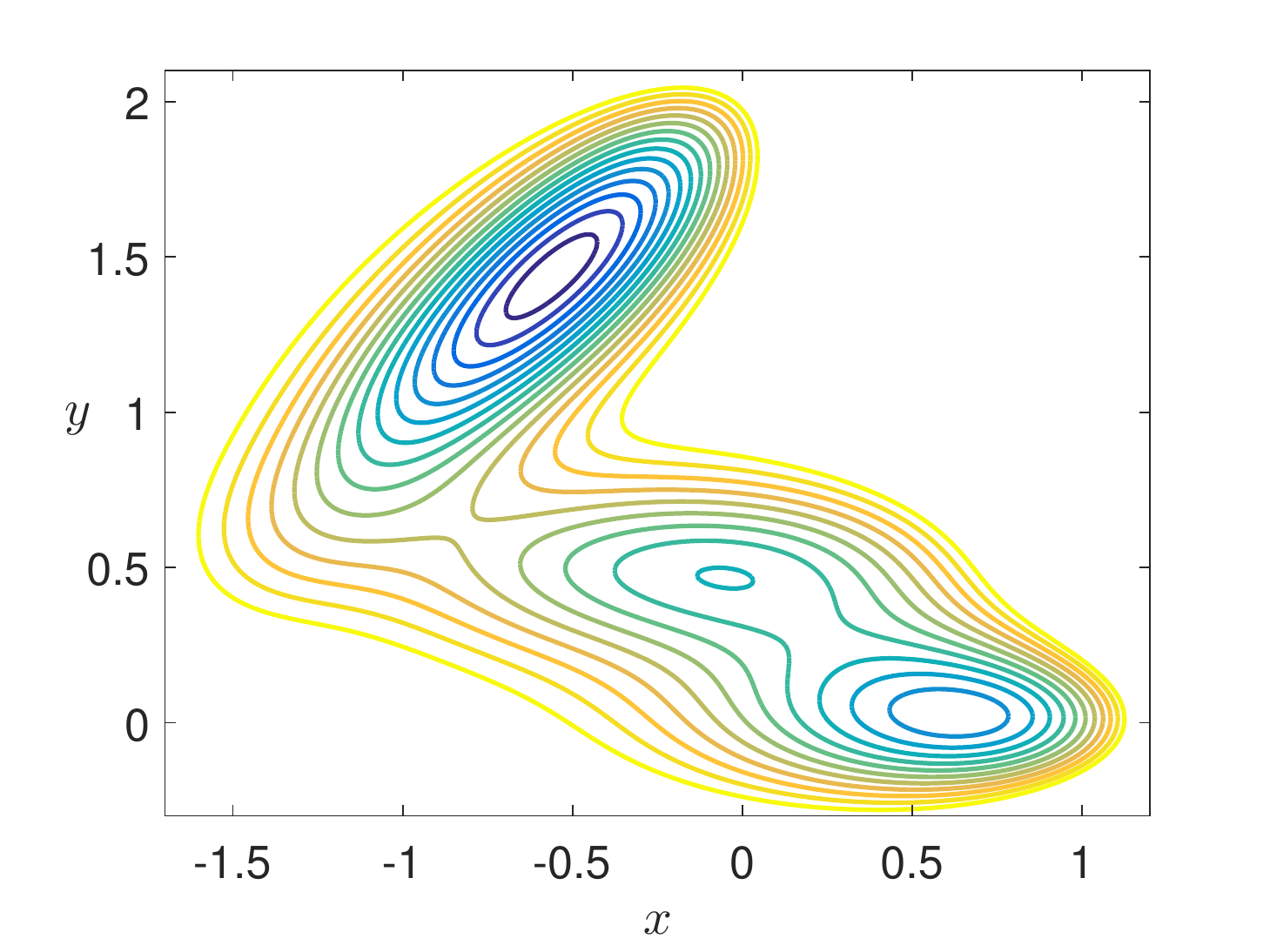}
\caption{The M\"uller-Brown potential (3D representation to the left, and level sets to the right).}
\end{figure}

\subsection{Finding and validating saddles points and local minima}
\label{sec:step_zeros}

For the M\"uller-Brown potential, numerically finding saddles and minima is rather straightforward, but for more complicated potentials this is where the numerical methods mentioned in Section~\ref{sec:background_stochastics} come into play. Once numerical approximations of saddles and minima have been found, one can readily validate such points, for instance by using the built-in \textsc{Intlab} function \texttt{verifynlss}, that provides a tight validated enclosure of solutions of a (finite dimensional) nonlinear system. Alternatively, one could use Theorem~\ref{th:T}. We are going to detail, how this can be done for one saddle point. This example is rather trivial because the problem is already finite dimensional: we want to validate a zero of $\nabla V:\R^2\to \R^2$, and hence there are no truncation errors to take care of. However, we believe that presenting this simple example in details will help the reader to better understand the more complicated applications of Theorem~\ref{th:T} that are to come in Sections~\ref{sec:manifold} and~\ref{sec:connection}. Using Newton's method, we get the following numerical approximation for the first saddle
\begin{equation*}
\barX=(-0.822001558732732, 0.624312802814871).
\end{equation*}
Our aim is now to prove, using Theorem~\ref{th:T}, that there is a zero of $\nabla V$ in a neighborhood of $\barX$. To do so, we introduce the operator
\begin{equation*}
T=I_2-A\nabla V,
\end{equation*}
where $I_2$ is the $2\times 2$ identity matrix, and
\begin{equation*}
A=\begin{pmatrix}
0.000085206327815 & 0.001664239983782 \\
0.001664239983782 & 0.000622806485951
\end{pmatrix}
\end{equation*}
is a numerically computed approximate inverse of the Hessian $\txtD^2 V(\barX)$. Therefore, $T$ is a Newton-like operator and should be a contraction around $\barX$. We now consider on $\R^2$ the supremum norm
\begin{equation*}
\left\Vert (x,y) \right\Vert =\max(\vert x\vert, \vert y\vert)
\end{equation*} 
and compute the bounds $Y$ and $Z$ satisfying~\eqref{def:bounds_T}. We point out that, while the choice of the norm is rather unimportant for this simple example, it will be crucial in the more involved cases presented later. To get a $Y$ bound satisfying~\eqref{def:Y_T}, we simply define
\begin{equation*}
Y=\left\Vert A\nabla V(\barX)\right\Vert.
\end{equation*}
Notice that the evaluation of this formula must be done with interval arithmetic, because we need to be sure that $\left\Vert T(\barX)-\barX\right\Vert \leq Y$. We get
\begin{equation*}
Y=5.291861481039345 \times 10^{-16}.
\end{equation*}
Next, we define a $Z$ bound satisfying~\eqref{def:Z_T}. In fact, the condition~\eqref{def:Z_T} only needs to hold in a neighborhood of $\barX$, therefore we introduce an a priori upper bound $r^*=10^{-5}$ on the validation radius, and aim at obtaining a function $Z$ such that
\begin{equation}
\label{def:Z_star}
\left\Vert \txtD T(X)\right\Vert \leq Z\left(\Vert X-\barX \Vert\right),\quad\forall X\in \ball(\barX,r^*).
\end{equation}
We will only have to check that the validation radius $\br$ that we eventually find satisfies $\br\leq r^*$. Based on the following splitting
\begin{align*}
\txtD T(X) &= I_2-A\txtD^2 V(X) \\
&= I_2-A\txtD^2 V(\barX) -A\left(\txtD^2 V(X) - \txtD^2 V(\barX)\right),
\end{align*}
we define (for $0\leq r\leq r^*$)
\begin{equation*}
Z(r)=Z_1+Z_2r,
\end{equation*}
where
\begin{equation*}
Z_1= \left\Vert I_2-A\txtD^2 V(\barX)\right\Vert \quad \text{and}\quad Z_2= \left\Vert A\right\Vert \sup_{X\in \ball(\barX,r^*)} \left\Vert \txtD^3 V(X)\right\Vert.
\end{equation*}
By the triangle inequality and the mean value theorem, we indeed have that~\eqref{def:Z_star} holds. Again, the evaluation of $Z_1$ and $Z_2$ must be done using interval arithmetic. Notice that (an upper bound of) $\sup_{X\in \ball(\barX,r^*)} \left\Vert \txtD^3 V(X)\right\Vert$ can be easily obtained with \textsc{Intlab}, by evaluating $\Vert \txtD^3 V(\tilde X)\Vert$, where $\tilde X = \ball(\barX,r^*)$.  We obtain
\begin{equation*}
Z_1=6.606610122939668\times 10^{-15} \quad \text{and}\quad Z_2=23.179491548050574.
\end{equation*}
To conclude, we must find $r>0$ satisfying~\eqref{cond:r_T}. The first condition may be re-written as
\begin{equation*}
\frac{Z_2}{2}r^2 - (1-Z_1)r + Y <0,
\end{equation*}
and the second simply as
\begin{equation*}
r <\frac{1-Z_1}{Z_2}.
\end{equation*}
We check, using interval arithmetic, that
\begin{equation*}
 \br=8.277841556061024\times 10^{-16}
\end{equation*} 
satisfies these two inequalities, and thus Theorem~\ref{th:T} yields the existence of a unique zero $X$ of $\nabla V$ such that $\left\Vert X-\barX \right\Vert\leq \br$. We have validated the numerical zero $\barX$.

\begin{remark}
Once a zero $\barX$ of $\nabla V$ has been validated, we can then apply the same techniques to rigorously compute its eigenvalues/eigenvectors, and thus obtain its Morse index. We omit the details here, as this problem is essentially identical to the one we just presented. We simply mention that an eigenvector problem naturally does not have a locally unique solution, and that on has to include a normalization condition to recover uniqueness and being able to use Banach fixed point theorem (see for instance~\cite{CasLes13}).
\end{remark}

\subsection{An equivalent formulation with a polynomial vector field}
\label{sec:polynomial_reformulation}

We are now almost ready to validate a heteroclinic orbit between a saddle and a minimum, for the vector field $-\nabla V$. Before doing so, we extend the dimension of the phase space, to obtain an \emph{equivalent} vector field with only polynomial nonlinearities. As we will see in Sections~\ref{sec:manifold} and~\ref{sec:connection}, this reformulation is a sort of trade-off. On the one hand, having polynomial nonlinearities makes it easier to obtain (and to implement) some of the estimates, whereas on the other hand, having more dimensions increases the computational cost. We point out that it is only the dimension of the phase space that is increased, and not the dimension of the object we have to validate, hence the increase in computational cost is not dramatic for low-dimensional examples. However, it is clear that this reformulation is not suitable for high-dimensional systems. We mention alternative approaches in Section~\ref{sec:generalizations}.

To obtain this new vector field with only polynomial nonlinearities, we introduce variables for the non polynomial terms in $-\nabla V$; that is we define
\begin{equation*}
\psi^{(i)}(x,y)=\alpha^{(i)} \exp\left(a^{(i)}\left(x-x^{(i)}_0\right)^2+b^{(i)}\left(x-x^{(i)}_0\right)\left(y-y^{(i)}_0\right)+c^{(i)}\left(y-y^{(i)}_0\right)^2\right), \quad i=1,\ldots,4,
\end{equation*}
consider $z^{(i)}=\psi^{(i)}(x,y)$ for $i=1,\ldots,4$, and compute the differential equations satisfied by these new quantities, assuming
\begin{equation}
\label{eq:grad_form}
\begin{pmatrix}
x' \\
y'
\end{pmatrix}
=-\nabla V(x,y).
\end{equation}
This leads us to consider the following \emph{extended variables}
\begin{equation}
\label{def:X}
X=(x,y,z^{(1)},z^{(2)},z^{(3)},z^{(4)})^\top,
\end{equation}
where the $z^{(i)}$ are now independent variables, and the associated \emph{extended system} is
\begin{equation}
\label{eq:extended_system}
X'=f(X),
\end{equation}
where the vector field is given by
\begin{align*}
f^{(1)}(X) &= -\sum_{i=1}^4\left(2a^{(i)}X^{(1)}+b^{(i)}X^{(2)}-w^{(i)}_1\right)X^{(i+2)} \\
f^{(2)}(X) &= -\sum_{i=1}^4\left(b^{(i)}X^{(1)}+2c^{(i)}X^{(2)}-w^{(i)}_2\right)X^{(i+2)} \\
f^{(j+2)}(X) &= -\left(\left(2a^{(j)}X^{(1)}+b^{(j)}X^{(2)}-w^{(j)}_1\right)\sum_{i=1}^4\left(2a^{(i)}X^{(1)}+b^{(i)}X^{(2)}-w^{(i)}_1\right)X^{(i+2)} \right. \\
&\qquad\quad \left. + \left(b^{(j)}X^{(1)}+2c^{(j)}X^{(2)}-w^{(j)}_2\right)\sum_{i=1}^4\left(b^{(i)}X^{(1)}+2c^{(i)}X^{(2)}-w^{(i)}_2\right)X^{(i+2)}\right)X^{(j+2)},
\end{align*}
for $j=1,\ldots,4$, and we define
\begin{equation*}
w^{(i)}_1=2a^{(i)}x^{(i)}_0+b^{(i)}y^{(i)}_0,\quad w^{(i)}_2=b^{(i)}x^{(i)}_0+2c^{(i)}y^{(i)}_0.
\end{equation*}
The formulations~\eqref{eq:grad_form} and~\eqref{eq:extended_system} are \emph{equivalent} in the following sense. The system~\eqref{eq:extended_system} was designed so that, if $(x,y)^\top$ is a solution of~\eqref{eq:grad_form} then $X$ defined as in~\eqref{def:X} with $z=\psi(x,y)$ solves~\eqref{eq:extended_system}. Conversely, by the uniqueness statement of the Picard-Lindel\"of Theorem, we get that a solution of~\eqref{eq:extended_system} with a suitable initial (or asymptotic) condition gives a solution of~\eqref{eq:grad_form}. 
\begin{lemma}
\label{lem:equivalence}
Let $X=(x,y,z^{(1)},z^{(2)},z^{(3)},z^{(4)})^\top$ be a solution of~\eqref{eq:extended_system}. Assume one of the following conditions holds:
\begin{enumerate}[label=(\roman*)]
\item there exists a time $t_0$ for which $z(t_0)=\psi\left(x(t_0),y(t_0)\right)$;
\item $X_\infty=lim_{t\to -\infty} X(t)$ exists and is such that $z_\infty=\psi\left(x_\infty,y_\infty\right)$.
\end{enumerate}
Then, $(x,y)$ solves~\eqref{eq:grad_form}.
\end{lemma}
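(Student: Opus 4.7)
The plan is to introduce the "ideal" auxiliary trajectories $\tilde z^{(j)}(t) := \psi^{(j)}(x(t),y(t))$ for $j=1,\ldots,4$ and to prove that $z^{(j)}(t) \equiv \tilde z^{(j)}(t)$. Once this equality is established, the first two equations of the extended system~\eqref{eq:extended_system} read $x' = -\sum_{i} \partial_x \psi^{(i)}(x,y)$ and $y' = -\sum_i \partial_y \psi^{(i)}(x,y)$, since the definition of $f^{(1)}$ and $f^{(2)}$ was engineered precisely so that replacing each $z^{(i)}$ by $\psi^{(i)}(x,y)$ recovers $-\partial_x V$ and $-\partial_y V$. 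Thus $(x,y)$ solves~\eqref{eq:grad_form} as claimed.

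The key observation is that $z^{(j)}$ and $\tilde z^{(j)}$ satisfy the \emph{same} scalar linear non-autonomous ODE along the solution $X(t)$. On the one hand, the definition of $f^{(j+2)}$ gives directly
\begin{equation*}
z^{(j)\prime}(t) = A_j(t)\, z^{(j)}(t),
\end{equation*}
where $A_j(t)$ denotes the bracketed expression in $f^{(j+2)}(X(t))$, which depends on $(x(t),y(t),z^{(1)}(t),\ldots,z^{(4)}(t))$ but is the same coefficient regardless of whether it is multiplied by $z^{(j)}$ or not. On the other hand, differentiating $\tilde z^{(j)}$ via the chain rule and using $\partial_x \psi^{(j)} = (2a^{(j)}x + b^{(j)}y - w_1^{(j)})\psi^{(j)}$, $\partial_y \psi^{(j)} = (b^{(j)}x + 2c^{(j)}y - w_2^{(j)})\psi^{(j)}$, together with the expressions for $x' = f^{(1)}(X)$ and $y' = f^{(2)}(X)$ taken from \eqref{eq:extended_system} (not from \eqref{eq:grad_form}, which is what we are trying to establish), yields after factoring out $\tilde z^{(j)}$ the identity
\begin{equation*}
\tilde z^{(j)\prime}(t) = A_j(t)\, \tilde z^{(j)}(t),
\end{equation*}
with precisely the same coefficient $A_j(t)$.

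Since $\psi^{(j)} = \alpha^{(j)}\exp(\cdots)$ never vanishes, $\tilde z^{(j)}(t) \ne 0$ for all $t$, and the ratio $W^{(j)}(t) := z^{(j)}(t)/\tilde z^{(j)}(t)$ is well defined; a direct computation using the two ODEs above shows $W^{(j)\prime} \equiv 0$, so $W^{(j)}$ is constant on the interval of existence. Under hypothesis~(i), $W^{(j)}(t_0) = 1$, hence $W^{(j)} \equiv 1$. Under hypothesis~(ii), the assumed limit and continuity of $\psi^{(j)}$ give $\tilde z^{(j)}(t) \to \psi^{(j)}(x_\infty,y_\infty) = z_\infty^{(j)}$ as $t\to -\infty$, so $W^{(j)}(t)\to z_\infty^{(j)}/\psi^{(j)}(x_\infty,y_\infty)=1$, and constancy again forces $W^{(j)}\equiv 1$. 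Either way, $z^{(j)} = \tilde z^{(j)} = \psi^{(j)}(x,y)$ identically, which completes the proof.

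I do not anticipate any serious obstacle; the argument is essentially a uniqueness statement for a linear ODE. The only point that deserves care is the derivation of the ODE for $\tilde z^{(j)}$: one must substitute the extended-system expressions for $x'$ and $y'$ (i.e.\ the values prescribed by $f^{(1)}(X(t))$ and $f^{(2)}(X(t))$ with the \emph{given} components $z^{(i)}(t)$), so that the coefficient that comes out matches exactly the $A_j(t)$ read off from $f^{(j+2)}(X(t))$. Substituting the gradient expressions instead would be circular.
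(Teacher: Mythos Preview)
Your proof is correct. The paper takes a slightly different tack: it asserts the identity $f(X)=\bigl(g(x,y,z),\,\txtD\psi(x,y)\,g(x,y,z)\bigr)^\top$ for all $X$, whence $z'=\txtD\psi(x,y)(x',y')^\top=\frac{\txtd}{\txtd t}\psi(x(t),y(t))$, so the \emph{difference} $z-\psi(x,y)$ is constant. Your version instead shows that $z^{(j)}$ and $\tilde z^{(j)}=\psi^{(j)}(x,y)$ satisfy the same scalar linear ODE $u'=A_j(t)u$, and concludes via constancy of the \emph{ratio}. In fact your route is the more careful one here: the outermost factor in the definition of $f^{(j+2)}(X)$ is $X^{(j+2)}=z^{(j)}$, not $\psi^{(j)}(x,y)$, so the paper's displayed identity for $f$ actually holds only on the constraint surface $z=\psi(x,y)$; off that surface what one really gets is precisely your pair $z^{(j)\prime}=A_j z^{(j)}$ and $(\psi^{(j)}(x,y))'=A_j\,\psi^{(j)}(x,y)$ with the same coefficient $A_j$. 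Your ratio argument (exploiting $\psi^{(j)}\neq 0$) then closes the loop cleanly and handles both hypotheses (i) and (ii) without circularity. Your cautionary remark about substituting the extended-system expressions for $x',y'$ rather than the gradient ones is exactly the right point to flag.
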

\begin{proof}
We can write $-\nabla V(x,y)=g(x,y,\psi(x,y))$, where $g:\R^6\to\R^2$,
\begin{align*}
g^{(1)}(x,y,z) &= -\sum_{i=1}^4\left(2a^{(i)}x+b^{(i)}y-w^{(i)}_1\right)z^{(i)} \\
g^{(2)}(x,y,z) &= -\sum_{i=1}^4\left(b^{(i)}x+2c^{(i)}y-w^{(i)}_2\right)z^{(i)}.
\end{align*}
We can then write
\begin{equation*}
f(X)=\begin{pmatrix}
g(x,y,z) \\
\txtD \psi(x,y)g(x,y,z)
\end{pmatrix},
\end{equation*}
and given a solution $X=(x,y,z^{(1)},z^{(2)},z^{(3)},z^{(4)})^\top$ of~\eqref{eq:extended_system}, $(x,y)$ solves~\eqref{eq:grad_form} if and only if $z(t)=\psi(x(t),y(t))$ for all time $t$ for which the solution is defined. However, since $X$ solves~\eqref{eq:extended_system} we have
\begin{equation*}
z'=\txtD \psi(x,y)g(x,y,z)=\txtD \psi(x,y)\begin{pmatrix} x' \\ y' \end{pmatrix} = \frac{\txtd}{\txtd t}\psi(x,y)
\end{equation*}
and thus either $(i)$ or $(ii)$ ensures that $z(t)=\psi(x(t),y(t))$ for all time $t$ for which the solution is defined.
\end{proof}

We also have the following statement which shows that for any equilibrium point of~\eqref{eq:grad_form} going to the extended system~\eqref{eq:extended_system} only adds center directions.
\begin{lemma}
\label{lem:extended_spectrum}
Let $(x_0,y_0)$ be a zero of $\nabla V$, and $X_0$ be defined as in~\eqref{def:X} with $z_0=\psi(x_0,y_0)$. Then 
\begin{equation*}
\det(\lambda I_6-\txtD f(X_0))=\lambda^4\det(\lambda I_2+\txtD^2V(x_0,y_0)).
\end{equation*}
In particular, $\txtD f(X_0)$ has the same spectrum as $-\txtD^2V(x_0,y_0)$, up to an additional $0$ of algebraic multiplicity 4.
\end{lemma}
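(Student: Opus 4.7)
My plan is to compute $\txtD f(X_0)$ in block form, observe that the vanishing of $g$ at $X_0$ produces a strong linear dependence between the blocks, and then extract the factor $\lambda^4$ via one block row operation followed by a Schur complement.

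First I would write $f$ in the compact form used in the proof of Lemma~\ref{lem:equivalence}: $f(X) = (g(x,y,z),\ \txtD\psi(x,y)\,g(x,y,z))^\top$, where $\txtD\psi(x,y)$ is the $4\times 2$ Jacobian of $\psi$. Concretely, since each $\psi^{(j)}$ is an exponential, one has $\txtD \psi^{(j)}(x,y) = \psi^{(j)}(x,y)\,(u^{(j)},v^{(j)})$ with $u^{(j)} = 2a^{(j)}x+b^{(j)}y-w_1^{(j)}$ and $v^{(j)} = b^{(j)}x+2c^{(j)}y-w_2^{(j)}$, which recovers exactly the $f^{(j+2)}$ given in Section~\ref{sec:polynomial_reformulation} when $z^{(j)} = \psi^{(j)}(x,y)$. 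The point is that $X_0$ corresponds to $g(X_0) = -\nabla V(x_0,y_0) = 0$.

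Next I would compute $\txtD f(X_0)$. Denote $G_1 = \partial_{(x,y)} g\,|_{X_0}$ ($2\times 2$), $G_2 = \partial_z g\,|_{X_0}$ ($2\times 4$), and $\Psi = \txtD\psi(x_0,y_0)$ ($4\times 2$). Differentiating $z^{(j)}\bigl(u^{(j)}g^{(1)}+v^{(j)}g^{(2)}\bigr)$ with the product rule, every term containing $g(X_0)$ drops out, so
\begin{equation*}
\txtD f(X_0) \;=\; \begin{pmatrix} G_1 & G_2 \\ \Psi\, G_1 & \Psi\, G_2 \end{pmatrix}.
\end{equation*}
Moreover, differentiating the identity $-\nabla V(x,y) = g(x,y,\psi(x,y))$ at $(x_0,y_0)$ gives the key relation $G_1 + G_2\,\Psi = -\txtD^2 V(x_0,y_0)$.

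Then I would exploit the block structure. Multiplying on the left by the unit-determinant matrix $\begin{psmallmatrix} I_2 & 0 \\ -\Psi & I_4 \end{psmallmatrix}$ (I would write it as $\begin{pmatrix} I_2 & 0 \\ -\Psi & I_4 \end{pmatrix}$ in the actual proof) transforms $\lambda I_6 - \txtD f(X_0)$ into
\begin{equation*}
\begin{pmatrix} \lambda I_2 - G_1 & -G_2 \\ -\lambda\Psi & \lambda I_4 \end{pmatrix},
\end{equation*}
since the operation cancels the $-\Psi G_1$ and $-\Psi G_2$ in the bottom row up to a residual $-\lambda\Psi$ on the left and $\lambda I_4$ on the right. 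The Schur complement with respect to the (invertible, for $\lambda\neq 0$) block $\lambda I_4$ yields
\begin{equation*}
\det(\lambda I_6 - \txtD f(X_0)) \;=\; \lambda^4\,\det\!\bigl(\lambda I_2 - G_1 - G_2\Psi\bigr) \;=\; \lambda^4\,\det\!\bigl(\lambda I_2 + \txtD^2 V(x_0,y_0)\bigr),
\end{equation*}
and the equality extends to $\lambda = 0$ by continuity of polynomials. The in-particular statement about the spectrum is then immediate.

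I do not expect a real obstacle here; the only mildly delicate point is the bookkeeping for $\partial f^{(j+2)}/\partial(x,y)$ and $\partial f^{(j+2)}/\partial z^{(k)}$, where one must verify that both the $(\txtD\rho^{(j)})\,g$ term and the $\delta_{jk}\rho^{(j)}g$ term drop out at $X_0$ because $g(X_0)=0$. Once this is observed, the rest is a direct block manipulation.
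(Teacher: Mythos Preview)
Your proof is correct and rests on the same key observation as the paper: since $g(X_0)=0$, the Jacobian factors as $\txtD f(X_0)=\begin{pmatrix} I_2 \\ \Psi \end{pmatrix}\txtD g(X_0)$, a product of a $6\times 2$ and a $2\times 6$ matrix. The only difference is in the final determinant step: the paper applies the Sylvester-type identity $\det(\lambda I_6-PQ)=\lambda^4\det(\lambda I_2-QP)$ directly via a $\det(MN)=\det(NM)$ trick with suitably chosen block matrices, whereas you unpack the same identity by a block row operation followed by a Schur complement. Both routes are standard and equivalent; your version has the minor advantage of making the relation $G_1+G_2\Psi=-\txtD^2V(x_0,y_0)$ appear explicitly, while the paper's is slightly more compact.
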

\begin{proof}
With the same notations as in the previous proof, we have
\begin{equation*}
-\nabla V(x,y)=g(x,y,\psi(x,y)) \quad \text{and} \quad
f(X)= \begin{pmatrix}
I_2 \\
\txtD \psi(x,y)
\end{pmatrix}g(x,y,z).
\end{equation*}
If $(x_0,y_0)$ is such that $\nabla V(x_0,y_0)=0$, and $z_0=\psi(x_0,y_0)$, then $g(X_0)=0$ and hence
\begin{equation*}
\txtD f(X_0)=\begin{pmatrix}
I_2 \\
\txtD \psi(x_0,y_0)
\end{pmatrix}\txtD g(X_0),
\end{equation*}
whereas
\begin{equation*}
-\txtD^2V(x_0,y_0)= \txtD g(X_0)\begin{pmatrix}
I_2 \\
\txtD \psi(x_0,y_0)
\end{pmatrix}.
\end{equation*}
The result follows from the identity
\begin{equation*}
\det(MN)=\det(NM),
\end{equation*}
with
\begin{equation*}
M=\begin{pmatrix}
\lambda I_2 & \txtD g(X_0)\\
\begin{pmatrix}
I_2 \\
\txtD \psi(x_0,y_0)
\end{pmatrix} & I_6
\end{pmatrix} \qquad 
N=\begin{pmatrix}
I_2 & -\txtD g(X_0)\\
0 & \lambda I_6
\end{pmatrix}.
\end{equation*}
\end{proof}

This \emph{polynomial reformulation} is based on ideas from automatic differentiation, and was introduced in the context of a posteriori validation in~\cite{LesMirRan16}. See~\cite[Section~4.2]{KepMir18} for a discusion about this type of polynomial reformulation in a more general framework, and also~\cite{Knu97,HarCanFigLuqMon16} for detailed algorithmic descriptions.

\subsection{Validation of heteroclinic orbits}

We are now ready to validate heteroclinic orbits. Since these orbits features boundary conditions \emph{at infinity}, we use the method of \emph{projected boundaries}~\cite{AscMatRus94}. That is, given a saddle point $S$ and a minimum $M$, we are going to look for an orbit $X:[0,\tau]\to\R^6$ satisfying
\begin{equation*}
X'=f(X),\quad X(0)\in W^{\textnormal{u}}_{\textnormal{loc}}(S),\quad X(\tau)\in W^{\textnormal{s}}_{\textnormal{loc}}(M).
\end{equation*}
Therefore, we first have to compute and validate a local unstable manifold for $S$ and a local stable manifold for $M$, and then compute and validate an orbit that connects them. The fact that $M$ is a minimum makes the determination of a local stable manifold rather straightforward, as will be explained just below. The validation a local unstable manifold for the saddle, and of an orbit connecting this unstable manifold to the minimum are more involved. We briefly present the main ideas in subsequent paragraphs, and postpone the technicalities to Sections~\ref{sec:manifold} and~\ref{sec:connection} where all the needed estimates are derived.

\subsubsection{Validation of trapping regions around the minima}
\label{sec:trapping}

In our example, each considered minimum is strict, and therefore its stable manifold contains a whole neighborhood of the minimum. We only have to explicitly find such a neighborhood. Using \textsc{Intlab}, we can prove the existence of a small square around each minimum, such that
\begin{enumerate}
\item $V$ is strictly convex in the square,
\item $-\nabla V$ is pointing strictly inward, on the boundary of the square.
\end{enumerate}
The second property ensures that an orbit entering the square never leaves it again, and the first proves that there is no other minimum in the square, and hence that any orbit entering the square converges to the minimum. Such a square is thus part of the stable manifold of the minimum.

\begin{remark}
A more precise description of the dynamics near the minimum could be obtained by applying the techniques described in Section~\ref{sec:manifold} to compute a validated parameterization of the local stable manifold. This approach could of course also be used to validate a local stable manifold of a saddle point, which would then serves as an \emph{end point} for an heteroclinic orbit between two saddles points.
\end{remark}

\subsubsection{Computation and validation of a local unstable manifold for the saddles}
\label{sec:step_manifold}

We now explain the main ideas that we use to compute and validate a local unstable manifold for each saddle. Our technique is based on the \emph{parameterization method}, introduced in~\cite{CabFonLla03,CabFonLla03bis,CabFonLla05} (see also the recent book~\cite{HarCanFigLuqMon16}). We want to obtain a parameterization that conjugates the dynamics on the unstable manifold with the unstable dynamics of the linearized system. To be more precise, let us consider a saddle point $S$, in the extended phase space $\R^6$, and denote by $\lambda$ the unstable eigenvalue, together with an associated eigenvector $v$. As in Section~\ref{sec:step_zeros}, $\lambda$ and $v$ can first be computed numerically and then validated rigorously, using Theorem~\ref{th:T} or via the built-in \texttt{verifyeig} function of \textsc{Intlab}. We look for a parameterization $p:[-1,1]\to \R^6$ such that $p(0)=S$ and
\begin{equation}
\label{eq:conjugacy}
\phi_t(p(\theta))=p(e^{\lambda t}\theta),\quad \forall~\theta\in[-1,1],\ \forall~t\leq 0, 
\end{equation}
where $\phi_t$ is the flow generated by $f$ (see Figure~\ref{fig:PM}). 
\begin{figure}[h]
\centering
\begin{overpic}
[width=0.65\linewidth]{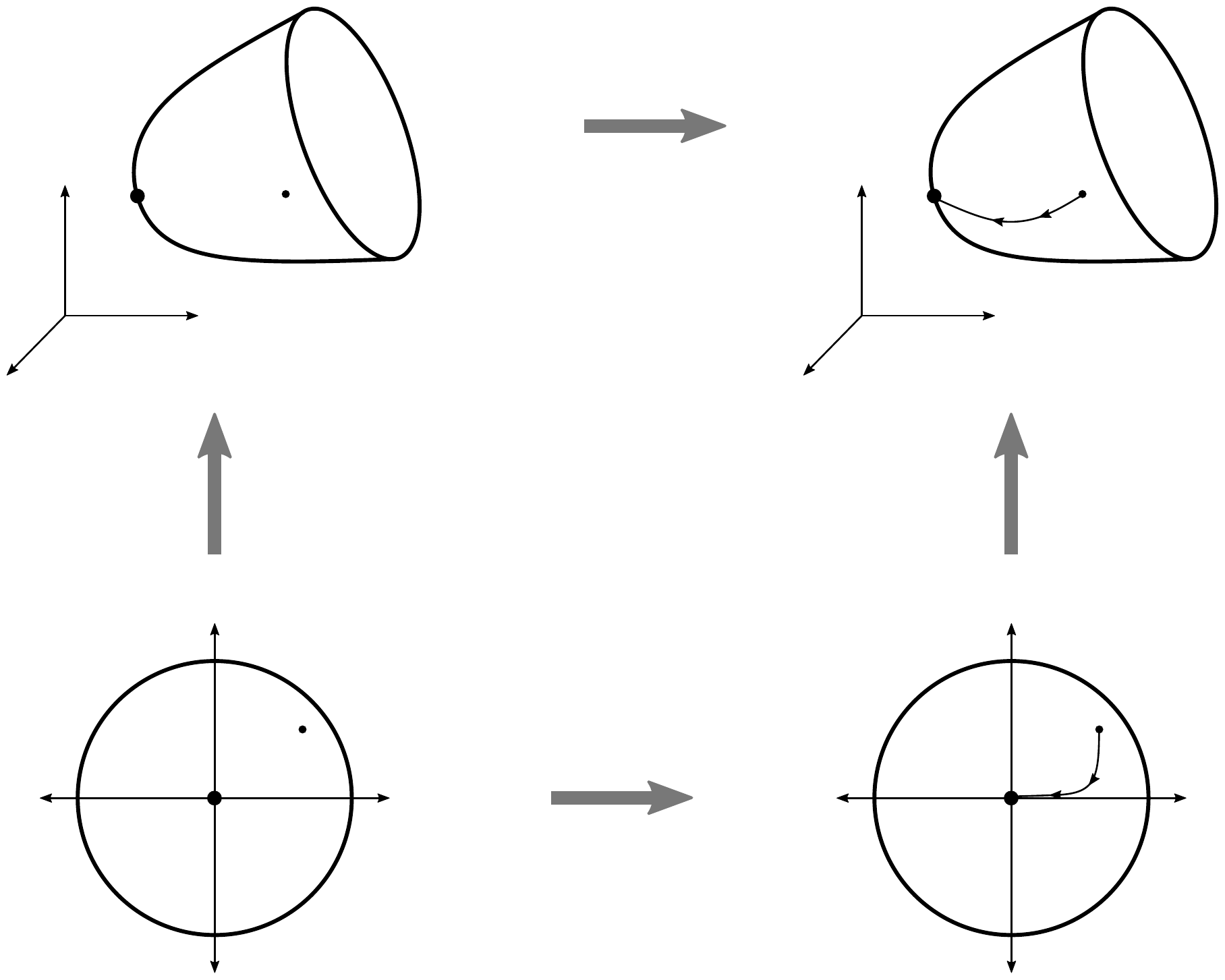}
\put (5,2) {\large$\bm{\R^m}$}
\put (70,2) {\large$\bm{\R^m}$}
\put (6,50) {\large$\bm{\R^n}$}
\put (71,50) {\large$\bm{\R^n}$}
\put (22,20) {\large$\bm{\theta}$}
\put (87,20) {\large$\bm{\theta}$}
\put (19,66) {\large$\bm{p(\theta)}$}
\put (84,66) {\large$\bm{p(\theta)}$}
\put (13,39) {\Large$\bm{p}$}
\put (84.5,39) {\Large$\bm{p}$}
\put (51,72) {\Large$\bm{\phi_t}$}
\put (48,10) {\Large$\bm{L_t}$}
\put (32,38.5) {\large$\bm{\phi_t(p(\theta))=p(L_t(\theta))}$}
\end{overpic}
\caption{Schematic illustration of the parameterization method. We want the parameterization $p$ to conjugate the nonlinear flow $\phi$ to the linearized flow $L$. In our actual example the (extended) phase space is of dimension $n=6$, whereas the parameter space is of dimension $m=1$ (the unstable manifold is one-dimensional), and the linearized flow is given by $L_t\theta = e^{\lambda t}\theta$. As suggested by the illustration, this method is not restricted to one-dimensional manifolds.}
\label{fig:PM}
\end{figure}
However, this formulation is not the most convenient one to work with, because it involves the flow. To get rid of it, one can take a time derivative of the above equation and evaluate at $t=0$, to obtain the following \emph{invariance equation}
\begin{equation}
\label{eq:invariance_equation}
f(p(\theta))=p'(\theta)\lambda\theta ,\quad \forall~\theta\in[-1,1].
\end{equation}
One can check that, if $p$ is such that $p(0)=S$ and solves~\eqref{eq:invariance_equation}, then $p$ satisfies~\eqref{eq:conjugacy}, therefore $p([-1,1])$ is a local unstable manifold of $S$. The invariance equation~\eqref{eq:invariance_equation} is the one we are going to solve, first numerically and then rigorously by applying a variation of Theorem~\ref{th:T}. Let us already mention that, since Theorem~\ref{th:T} is based on a contraction argument, it can only be used to validate locally unique solutions. However, the invariance equation~\eqref{eq:invariance_equation} together with the condition $p(0)=S$ has a one parameter family of solutions (corresponding to different rescalings) and to isolate the solution one has to add a constraint on the norm of the derivative, i.e. fix a scalar $\gamma$ such that $p'(0)=\gamma v$.

Since $f$ is analytic, there is an analytic solution to~\eqref{eq:invariance_equation} (see~\cite{CabFonLla03}), therefore we can look for a power series representation of $p$. Starting from $p(0)=S$ and $p'(0)=\gamma v$, and taking advantage of the fact that our extended vector field $f$ is polynomial, it becomes rather straightforward to recursively compute the coefficients of the Taylor expansion. Once sufficiently many coefficients have been computed, the remaining \emph{tail} can be controlled by Theorem~\ref{th:T}. A detailed exposition of this procedure is the subject of Section~\ref{sec:manifold}. Before proceeding further, let us mention that the approach presented here, i.e., combining the parameterization method with a posteriori validation techniques, was first used in~\cite{BerLesMirMis11} and then further developed in several subsequent works (among which~\cite{BreLesMir16} where the role of the scaling of the eigenvectors is investigated in details, and~\cite{BerMirRei16}, where an extension to treat \emph{resonance conditions} is introduced).

\subsubsection{Computation and validation of orbits connecting unstable manifolds and trapping regions}

Our last objective is to validate an orbit starting on the unstable manifold of a saddle and ending in the attracting set of a minimum, that is we want validate a solution $X:[0,\tau]\to \R^6$ of the problem
\begin{equation}
\label{eq:cauchy_problem}
\left\{
\begin{aligned}
X'&=f(X) \\
X(0)&=p(1)
\end{aligned}
\right.
\end{equation}
for a time $\tau$ large enough so that $\left(X^{(1)}(\tau),X^{(2)}(\tau)\right)$ lies in the validated trapping region of a minimum of the potential $V$. Notice that, $X(0)=p(1)$ ensures that the orbit is on the unstable manifold of the saddle (in the extended phase space), and by Lemma~\ref{lem:equivalence} $(ii)$ we get that $\left(X^{(1)},X^{(2)}\right)$ indeed solves~\eqref{eq:grad_form} and is on the unstable manifold of the saddle in the original two-dimensional phase space. To validate a solution $X$ of~\eqref{eq:cauchy_problem}, we use an approach based on Chebyshev series, introduced in~\cite{LesRei14}. The method is similar in spirit to the one described in the previous subsection to solve the invariance equation, the main difference being that we look for a solution represented as a Chebyshev series and not as a power series. Again, our approximate solution will only have a finite number of coefficients, and we control the error with the true solution thanks to a variation of Theorem~\ref{th:T}. One difference with the situation described in the previous subsection is that the coefficients cannot be computed recursively, as the obtained system is fully coupled. Therefore, we have to consider a fixed point operator on the whole sequence of coefficients rather than just of the tail, but this operator has no reason to be a contraction. We remedy to this by introducing a Newton-like reformulation. 

In practice, the integration time $\tau$ for which we have to validate the orbit until it reaches the trapping region of the minimum can be quite long, and maybe too long for the orbit to be validated using a single Chebyshev series. To validate the orbit for longer times, we use the \emph{domain decomposition} principle introduced in the context of a posteriori validation using Chebyshev series in~\cite{BerShe15}. The procedure outlined in this subsection is presented in full details in Section~\ref{sec:connection}.

\section{Validation of local manifolds}
\label{sec:manifold}

\subsection{Background}
\label{sec:background_manifold}

The material introduced in this subsection is mostly standard in rigorous numerics. It is mainly included to fix the notation and to select/refine suitable operator norms for our computations. The reader familiar with the notions presented here might skip Section~\ref{sec:background_manifold} on first reading, proceed to Section~\ref{ssec:framework4}, and only refer back when needed.

\subsubsection{Notations}

For $X\in\R^6$, we denote its components by $X^{(i)}$, $i=1,\ldots,6$. For $\bu\in\R^\N$, we denote its components by $\bu_n$, $n\in\N$. Given two sequences $\bu,\bv\in\R^\N$, we denote by $\bu\star \bv$ their Cauchy product
\begin{equation*}
(\bu\star \bv)_n=\sum_{m=0}^n \bu_m \bv_{n-m},\quad \forall~n\in\N. 
\end{equation*} 
For $\p\in \left(\R^\N\right)^6$, using the isomorphism between $\left(\R^6\right)^\N$ and $\left(\R^\N\right)^6$, we can think of $\p$ either as a sequence indexed on $\N$ and with values in $\R^6$, that is
\begin{equation*}
\p=\left(\p_n\right)_{n\in\N},\quad \p_n\in\R^6,
\end{equation*}
or as $6$ sequences indexed on $\N$ and with values in $\R$, that is
\begin{equation*}
\p=\left(\p^{(1)},\ldots,\p^{(6)}\right),\quad \p^{(i)}\in\R^\N.
\end{equation*}
To a sequence $\p\in\left(\R^6\right)^\N$ can be associated a power series $p$ with values in $\R^6$, defined by
\begin{equation*}
p(\theta)=\sum_{n=0}^\infty \p_n\theta^n,\quad \theta\in\R.
\end{equation*}
We always use this convention of denoting a (possibly multivariate) function like $p$ with an unbold character, whereas we use a bold character like $\p$ for the associated sequence of coefficients. We introduce in the next subsection a subspace of $\left(\R^6\right)^\N$ for which such series is guaranteed to converge, at least for $\theta\in[-1,1 ]$. Similarly, we denote by $\f:\left(\R^\N\right)^6 \to \left(\R^\N\right)^6$ the map such that, for any power series $p$, $\f(\p)$ are the coefficients of the power series $f(p)$, that is 
\begin{equation*}
\f^{(1)}_n(\p)=  -\sum_{i=1}^4\left(2a^{(i)}\left(\p^{(1)}\star \p^{(i+2)}\right)_n+b^{(i)}\left(\p^{(2)}\star \p^{(i+2)}\right)_n-w^{(i)}_1\p^{(i+2)}_n\right),
\end{equation*}
and so on.

\subsubsection{Norms and Banach spaces}

For $\eta\in \R_{>0}^6$, we define the following weighted $1$-norm on $\R^6$ :
\begin{equation*}
\left\vert X\right\vert_\eta = \sum_{i=1}^6 \vert X^{(i)} \vert \eta^{(i)}.
\end{equation*}
We denote the usual $\ell^1$-norm on $\R^\N$ by $\left\Vert\cdot\right\Vert_1$, that is
\begin{equation*}
\left\Vert \bu \right\Vert_1 = \sum_{n=0}^\infty \vert \bu_n \vert.
\end{equation*}
We recall that
\begin{equation*}
\left\Vert \bu\star\bv \right\Vert_1 \leq \left\Vert \bu \right\Vert_1 \left\Vert \bv \right\Vert_1.
\end{equation*}
To highlight the key space we are going to use, we state it in the next definition:

\begin{definition}
\label{def:space_manifold}
Let $\eta\in \R_{>0}^6$. For $\p\in \left(\R^\N\right)^6$ we define
\begin{equation*}
\left\Vert \p \right\Vert_{\X_\eta} = \sum_{i=1}^6 \sum_{n=0}^\infty  \vert\p_n^{(i)}\vert \eta^{(i)},
\end{equation*}
and
\begin{equation*}
\X_\eta=\left\{\p\in \left(\R^\N\right)^6 ,\ \left\Vert \p\right\Vert_{\X_\eta}<\infty\right\}.
\end{equation*}
\end{definition}

Notice that we have
\begin{equation*}
\left\Vert \p \right\Vert_{\X_\eta} = \sum_{i=1}^6 \left\Vert \p^{(i)} \right\Vert_1 \eta^{(i)},
\end{equation*}
and with a slight abuse of notation
\begin{equation*}
\left\Vert \p \right\Vert_{\X_\eta} = \left\vert \left\Vert \p \right\Vert_1\right\vert_\eta,
\end{equation*}
where $\left\Vert \p \right\Vert_1$ must be understood as $\left(\left\Vert \p^{(1)} \right\Vert_1,\ldots,\left\Vert \p^{(6)} \right\Vert_1\right)$.

\subsubsection{Operator norms}
\label{sec:op_blocks_manifold}

If $A=\left(A^{(i,j)}\right)_{1\leq i,j\leq 6}$ is a $6\times 6$ matrix, we still denote by $\left\vert A\right\vert_\eta$ the associated operator norm, that is
\begin{equation*}
\left\vert A\right\vert_\eta=\sup\limits_{\left\vert X\right\vert_\eta=1} \left\vert AX\right\vert_\eta.
\end{equation*}
We recall that
\begin{equation*}
\left\vert A\right\vert_\eta = \max\limits_{1\leq j\leq 6} \frac{1}{\eta^{(j)}}\sum_{i=1}^6 \left\vert A^{(i,j)} \right\vert \eta^{(i)}.
\end{equation*}
For a given matrix $A$ with positive coefficients, the weight $\eta$ can be chosen in a optimal way to minimize $\left\vert A \right\vert_\eta$. This is the content of the following proposition.

\begin{proposition}
\label{prop:frob}
Let $A$ be a $d \times d$ matrix of positive numbers. There exists a left-eigenvector $\bar\eta$, with positive coefficients, associated to the spectral radius of $A$, i.e.
\begin{equation}
\label{eq:frob}
\bar\eta^\top A = \rho(A)\bar\eta^\top.
\end{equation}
Besides
\begin{equation}
\label{eq:optimal_weight}
\left\vert A\right\vert_{\bar\eta} = \rho(A) = \inf\limits_{\eta\in \R_{>0}^d} \left\vert A\right\vert_{\eta}.
\end{equation}
\end{proposition}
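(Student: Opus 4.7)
The plan is to reduce the three claims to classical Perron--Frobenius theory plus a direct substitution in the explicit formula for $\left\vert A\right\vert_\eta$. First, I would apply the Perron--Frobenius theorem to $A^\top$: since all entries of $A$ are strictly positive, the same holds for $A^\top$, so Perron--Frobenius yields a strictly positive right-eigenvector $\bar\eta$ of $A^\top$ with eigenvalue $\rho(A^\top)=\rho(A)$. Transposing the identity $A^\top\bar\eta=\rho(A)\bar\eta$ gives exactly~\eqref{eq:frob}, and the strict positivity of $\bar\eta$ is precisely what is required for it to define a valid weight in Definition~\ref{def:space_manifold}.

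Next I would establish the middle equality in~\eqref{eq:optimal_weight} by plugging $\bar\eta$ into the recalled formula
\begin{equation*}
\left\vert A\right\vert_{\bar\eta} = \max_{1\leq j\leq d} \frac{1}{\bar\eta^{(j)}}\sum_{i=1}^d \left\vert A^{(i,j)}\right\vert \bar\eta^{(i)}.
\end{equation*}
Since $A$ has positive entries, the absolute values can be dropped, and the eigenvector identity $\sum_i A^{(i,j)}\bar\eta^{(i)} = \rho(A)\bar\eta^{(j)}$ makes every term in the maximum equal to $\rho(A)$. Hence $\left\vert A\right\vert_{\bar\eta}=\rho(A)$, with no actual maximization needed.

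Finally, for the optimality statement $\rho(A)=\inf_{\eta\in\R_{>0}^d} \left\vert A\right\vert_\eta$, I would invoke the standard fact that the spectral radius is a lower bound for every operator norm induced by a vector norm. This follows directly from the eigenvalue inequality: if $\lambda$ is an eigenvalue of $A$ with eigenvector $v\neq 0$, then $\vert\lambda\vert\, \vert v\vert_\eta = \vert Av\vert_\eta \leq \vert A\vert_\eta\, \vert v\vert_\eta$, so $\vert\lambda\vert\leq \vert A\vert_\eta$. Taking the supremum over eigenvalues gives $\rho(A)\leq \vert A\vert_\eta$ for every $\eta\in\R_{>0}^d$. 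Combining with the previous step shows the infimum is attained at $\bar\eta$ and equals $\rho(A)$.

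There is no real obstacle here beyond invoking Perron--Frobenius cleanly; the only point that deserves care is making sure the eigenvector produced is \emph{strictly} positive (so that it genuinely lies in $\R_{>0}^d$ and defines the weighted norm), which is exactly the strong form of Perron--Frobenius available because $A$ has strictly positive entries. The rest is a one-line computation plus a one-line classical inequality.
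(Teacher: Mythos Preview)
Your proposal is correct and follows essentially the same approach as the paper's own proof: invoke Perron--Frobenius for the existence of a positive left-eigenvector, substitute into the explicit formula for $\left\vert A\right\vert_{\bar\eta}$ to obtain $\rho(A)$, and conclude optimality from the standard fact that the spectral radius is a lower bound for any induced operator norm. You provide slightly more detail (applying Perron--Frobenius to $A^\top$ and spelling out the eigenvalue inequality), but the structure and ideas are identical.
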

\begin{proof}
The existence of $\bar\eta$ is given by the Perron-Frobenius Theorem. Identity~\eqref{eq:frob} then yields 
\begin{equation*}
\sum_{i=1}^6  A^{(i,j)} \eta^{(i)} = \rho(A) \eta^{(j)},\quad \forall~j\in\{1,\ldots,d\},
\end{equation*}
and thus $\left\vert A\right\vert_{\bar\eta} = \rho(A)$. Finally, since the spectral radius is bounded from above by any operator norm, $\bar\eta$ is indeed a weight that minimizes $\left\vert A\right\vert_{\eta}$. 
\end{proof}

\begin{remark}
The above proposition allows us to choose weights in an optimal way, at least with respect to one crucial estimate needed to control the contraction rate and apply Theorem~\ref{th:T} (see Sections~\ref{sec:Z1_manifold} and~\ref{sec:Z1_orbit}). We point out that the connection between the choice of such weigths and the Perron-Frobenius Theorem was also noticed in~\cite{BerWil17}. An alternative approach would be to include the weights in the validation radius $\bar r$ of Theorem~\ref{th:T} (see for instance~\cite{Ber18}).
\end{remark}

If $\A=\left(\A_{m,n}\right)_{m,n\in\N}$ is an "infinite matrix" representing a linear operator on $\ell^1$, we still denote by $\left\Vert \A\right\Vert_1$ the associated operator norm, that is
\begin{equation*}
\left\Vert \A\right\Vert_1=\sup\limits_{\left\Vert \bu\right\Vert_1=1} \left\Vert \A\bu\right\Vert_1.
\end{equation*}
We recall that
\begin{equation*}
\left\Vert \A\right\Vert_1 = \sup\limits_{n\in\N} \sum_{m\in\N} \left\vert \A_{m,n} \right\vert.
\end{equation*}
Finally, a linear operator $\A$ on $\X_\eta$ can be represented as an "infinite block-matrix", that is
\begin{equation*}
\A=\begin{pmatrix}
\A^{(1,1)} & \ldots & \A^{(1,6)} \\
\vdots & \ddots & \vdots \\ 
\A^{(6,1)} & \ldots & \A^{(6,6)} \\  
\end{pmatrix},
\end{equation*}
where each block $\A^{(i,j)}$ is a linear operator on $\ell^1$. Each of these blocks can themselves be represented as infinite matrices $\A^{(i,j)}=\left(\A^{(i,j)}_{m,n}\right)_{m,n\in\N}$ and their $\ell^1$ operator norm is then given by
\begin{equation*}
\left\Vert \A^{(i,j)}\right\Vert_1 = \sup_{n\in\N} \sum_{m\in\N} \left\vert \A^{(i,j)}_{m,n}\right\vert.
\end{equation*} 
By a slight abuse of notation, when $\A$ is a linear operator on $\X_\eta$, we consider that $\left\Vert \cdot \right\Vert_1$ applies block-wise to $\A$, that is we define\begin{equation*}
\left\Vert \A \right\Vert_1=\begin{pmatrix}
\left\Vert \A^{(1,1)}\right\Vert_1 & \ldots & \left\Vert \A^{(1,6)}\right\Vert_1 \\
\vdots & \ddots & \vdots \\ 
\left\Vert \A^{(6,1)}\right\Vert_1 & \ldots & \left\Vert \A^{(6,6)}\right\Vert_1 \\  
\end{pmatrix}.
\end{equation*}
Similarly, $\left\vert \cdot \right\vert_{\eta}$ applies component-wise to $\left\Vert \A \right\Vert_1$, that is we define
\begin{equation*}
\left\vert \left\Vert \A \right\Vert_1 \right\vert_\eta = \max_{1\leq j\leq 6} \frac{1}{\eta^{(j)}}\sum_{i=1}^6\left\Vert \A^{(i,j)}\right\Vert_1 \eta^{(i)}.
\end{equation*}
We have that
\begin{align*}
\left\Vert \A\right\Vert_{\X_\eta} &= \max\limits_{1\leq j\leq 6} \sup\limits_{n\in\N} \frac{1}{\eta^{(j)}} \sum_{i=1}^6\sum_{m\in\N} \left\vert \A^{(i,j)}_{m,n}\right\vert \eta^{(i)} \\
&\leq \max\limits_{1\leq j\leq 6}  \frac{1}{\eta^{(j)}} \sum_{i=1}^6 \eta^{(i)} \sup\limits_{n\in\N}\sum_{m\in\N} \left\vert \A^{(i,j)}_{m,n}\right\vert  \\
&= \max\limits_{1\leq j\leq 6}  \frac{1}{\eta^{(j)}} \sum_{i=1}^6  \left\Vert \A^{(i,j)}\right\Vert_1 \eta^{(i)},
\end{align*}
that is
\begin{equation}
\label{eq:lin_op_norm_computable}
\left\Vert \A\right\Vert_{\X_\eta} \leq \left\vert \left\Vert \A \right\Vert_1 \right\vert_\eta.
\end{equation}
Notice that, once $\left\Vert \A \right\Vert_1$ has been computed, Proposition~\ref{prop:frob} can be used to find a weight $\eta$ that minimize $\left\vert \left\Vert \A \right\Vert_1 \right\vert_\eta$. Similarly, if $\B$ is an $k$-linear operator on $\X_\eta$, we still denote by $\left\Vert \B\right\Vert_{\X_\eta}$ its operator norm, defined as
\begin{equation*}
\left\Vert \B\right\Vert_{\X_\eta} = \sup\limits_{\substack{\left\Vert \bu_l\right\Vert_{\X_\eta}=1 \\ l=1,\ldots,k }} \left\Vert \B(\bu_1,\ldots,\bu_k) \right\Vert_{\X_\eta},
\end{equation*}
and have that
\begin{equation}
\label{eq:multilin_op_norm_computable}
\left\Vert \B\right\Vert_{\X_\eta} \leq \max\limits_{1\leq j_1,\ldots,j_k\leq 6}  \frac{1}{\eta^{(j_1)}\ldots\eta^{(j_k)}} \sum_{i=1}^6  \left\Vert \B^{(i,j_1,\ldots,j_k)}\right\Vert_1 \eta^{(i)}.
\end{equation}

\subsection{Framework}
\label{ssec:framework4}

Let $(x_0,y_0)\in\R^2$ be a saddle point of $V$, denote by $\lambda>0$ the unstable eigenvalue and by $\mu<0$ the stable one. We want to validate a parametrization of the local unstable manifold. We consider $X_0$ defined as in~\eqref{def:X} with $z_0=\psi(x_0,y_0)$ and work with the extended system, given by $f$. By Lemma~\ref{lem:extended_spectrum}, $\lambda$ is also the unique unstable eigenvalue of $\txtD f(X_0)$. We denote by $v$ an associated eigenvector. We look for a parametrization $p:\R\to\R^6$ of the local unstable manifold at $X_0$, that satisfies $p(0)=X_0$, $p'(0)=\gamma v$, for some $\gamma>0$ to be chosen later, and the invariance equation
\begin{equation}
\label{eq:invariance}
p'(\theta)\lambda\theta=f(p(\theta)), \quad \theta\in[-1,1].
\end{equation}
We look for a power series representation of $p$, that is
\begin{equation}
\label{def:p}
p(\theta)=\sum_{n=0}^\infty \p_n \theta^n, \quad \theta\in[-1,1],\ \p_n\in\R^6.
\end{equation}
The invariance equation~\eqref{eq:invariance} can be rewritten as an equation on $\p$, yielding: 
\begin{equation}
\label{eq:invariance_coeff}
\p_0=X_0,\quad \p_1=\gamma v\quad \text{and}\quad n\lambda \p_n = \f_n(\p),\quad \forall n\geq 2.
\end{equation}
We denote $\pi_n(\p)=(\p_0,\ldots,\p_n)\in \left(\R^6\right)^{n+1}$, which we also may identify with the element 
\begin{equation*}
(\p_0,\ldots,\p_n,0,\ldots,0,\ldots)\in\left(\R^6\right)^{\N}. 
\end{equation*}
Notice that, for any $\bu,\bv\in \R^\N$, the $n$-th coefficient of a Cauchy product $(\bu\star \bv)_n$ only depends on the coefficients $\left(\bu_m\right)_{0\leq m\leq n}$ and $\left(\bv_m\right)_{0\leq m\leq n}$, and therefore $\f_n(\p)$ in fact only depends on $\pi_n(\p)$, i.e.
\begin{align*}
\f_n(\p) &=\f_n(\pi_n(\p)),\quad \forall n\geq 1.
\end{align*}
Next, we want to extract from $\f_n(\p)$ the contribution of $\p_n$. To do so, we consider the Taylor series representing $\f(\pi_n(\p))$ that is
\begin{equation*}
f\left(\sum_{k=0}^n p_k\theta ^k\right),
\end{equation*}
and use a Taylor expansion to write
\begin{align*}
f\left(\sum_{k=0}^n p_k\theta ^k\right) &= f\left(\sum_{k=0}^{n-1} p_k\theta ^k + p_n\theta^n\right) \\
&= f\left(\sum_{k=0}^{n-1} p_k\theta ^k \right) + Df\left(\sum_{k=0}^{n-1} p_k\theta ^k\right)p_n\theta^n + \text{higher order terms}.
\end{align*}
Looking at the coefficient of degree $n$ in each of the Taylor series above, we get
\begin{equation*}
\f_n(\pi_n(\p)) = \f_n(\pi_{n-1}(\p)) + D\f(\p_0)\p_n,\quad \forall n\geq 1,
\end{equation*}
and there is no contribution from the higher order terms since they have only coefficients of degree $2n$ or more. Introducing the $6\times 6$ matrices $M_n=n\lambda I_6-\txtD f(X_0)$, which are invertible for all $n\geq 2$, the invariance equation~\eqref{eq:invariance_coeff} for the coefficients $\p_n$, $n\geq 2$, rewrites
\begin{align*}
n\lambda \p_n - \txtD f(X_0)\p_n &= \f_n(\pi_{n-1}(\p)) \\
\p_n &= M_n^{-1}\f_n(\pi_{n-1}(\p)).
\end{align*}
Therefore, starting from $\p_0=X_0$ and $\p_1=\gamma v$, the coefficients $\p_n$ can be computed recursively. However, in practice one can only compute finitely many coefficients. Assume that the coefficients $\p_2,\ldots,\p_{N-1}$ have been computed, for a given $N\in\N$. For any $\p\in\X_\eta$, define $\hat \p:= (\p_0,\ldots,\p_{N-1})$, $\check \p:=(\p_N,\p_{N+1},\ldots)$. If $\gamma$ is chosen appropriately (see Remark~\ref{rem:gamma}), and if the truncation mode $N$ is large enough, the finite series given by $\hat p$ should be a good approximate parameterization of the local unstable manifold. To verify this claim, we introduce the operator
\begin{equation}
\label{def:T_manifold}
\T:\begin{cases}
\X_\eta\to \X_\eta \\
\check \p \mapsto \T(\check \p)=\left(\T_n(\check \p)\right)_{n\geq N},
\end{cases}
\end{equation}
where
\begin{equation*}
\T_n(\check \p) = M_n^{-1}\f_n(\pi_{n-1}(\p)),\quad \forall n\geq N,
\end{equation*}
with $\p=(\hat \p,\check \p)$. For convenience we also introduce the function $\g$ defined by $\g_n(\p)=\f_n(\pi_{n-1}(\p))$, so that $\T_n(\check \p) = M_n^{-1}\g_n(\p)$. We note that the coefficients of $\hat\p$ are now simply parameters that have already been computed, rather than variables. If $\check\p$ is a fixed point of $\T$, then $\p=(\hat\p,\check\p)$ solves~\eqref{eq:invariance_coeff} and hence corresponds to an exact parameterization of the local unstable manifold. Our goal is to show that there exists a fixed point of $\T$ in a neighborhood of $\0$, which would prove that $\hat\p$ is indeed an approximate parameterization. Our main tool is going to be the following corollary of Theorem~\ref{th:T}.

\begin{corollary}
\label{cor:T}
Let $\XX$ be a Banach space, and $T:\XX\to\XX$ a polynomial map of degree 4. Let $\barX\in\XX$ and $Y$, $Z_1$, $Z_2$, $Z_3$, $Z_4$ be non negative constants such that
\begin{subequations}

\label{def:bounds_corT}
\begin{align}
\label{def:Y_corT}
\left\Vert T(\barX)-\barX\right\Vert &\leq Y \\
\label{def:Z1_corT}
\left\Vert \txtD T(\barX)\right\Vert &\leq Z_1 \\
\label{def:Z2_corT}
\left\Vert \txtD^2T(\barX)\right\Vert &\leq Z_2 \\
\label{def:Z3_corT}
\left\Vert \txtD^3T(\barX)\right\Vert &\leq Z_3 \\
\label{def:Z4_corT}
\left\Vert \txtD ^4T(\barX)\right\Vert &\leq Z_4.
\end{align}
\end{subequations}
Consider
\begin{subequations}
\label{def:radii_pol_T}
\begin{align}
\label{def:radii_pol1_T}
P(r)&=\frac{Z_4}{24}r^4+\frac{Z_3}{6}r^3+\frac{Z_2}{2}r^2-(1-Z_1)r+Y \\
Q(r)&=\frac{Z_4}{6}r^3+\frac{Z_3}{2}r^2+Z_2r-(1-Z_1)
\end{align}
\end{subequations}
If there exists $\br>0$ such that $P(\br)<0$, then for all $r$ in the non empty interval $(r_{min},r_{max})$, $T$ has a unique fixed point in $\ball(\barX,r)$, where $r_{min}$ is the smallest positive root of $P$ and $r_{max}$ is the unique positive root of $Q$.
\end{corollary}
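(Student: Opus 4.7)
\smallskip

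\noindent\textbf{Proof proposal.} The plan is to reduce Corollary~\ref{cor:T} to Theorem~\ref{th:T} by constructing an explicit increasing continuous function $Z$ that bounds $\|\txtD T(X)\|$ on all of $\XX$, using the fact that $T$ is a polynomial of degree $4$ (so $\txtD T$ is a polynomial of degree $3$ and its Taylor expansion around $\barX$ is exact). Writing
\begin{equation*}
\txtD T(X) = \txtD T(\barX) + \txtD^2T(\barX)(X-\barX) + \tfrac{1}{2}\txtD^3T(\barX)(X-\barX)^{\otimes 2} + \tfrac{1}{6}\txtD^4T(\barX)(X-\barX)^{\otimes 3},
\end{equation*}
the triangle inequality together with~\eqref{def:Z1_corT}--\eqref{def:Z4_corT} gives
\begin{equation*}
\|\txtD T(X)\| \;\leq\; Z(\|X-\barX\|), \qquad Z(r) := Z_1 + Z_2 r + \tfrac{Z_3}{2}r^2 + \tfrac{Z_4}{6}r^3,
\end{equation*}
which is plainly non-negative, increasing and continuous on $[0,\infty)$, as required by Theorem~\ref{th:T}.

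A direct computation shows that the two conditions in~\eqref{cond:r_T} translate into polynomial inequalities in $r$: condition~\eqref{cond:rY_T} becomes
\begin{equation*}
Y + \int_0^{r} Z(s)\,\txtd s - r \;=\; \tfrac{Z_4}{24}r^4 + \tfrac{Z_3}{6}r^3 + \tfrac{Z_2}{2}r^2 - (1-Z_1)r + Y \;=\; P(r) \;<\; 0,
\end{equation*}
while condition~\eqref{cond:rZ_T} becomes $Z(r) - 1 = Q(r) < 0$. Observe moreover that $Q = P'$, so the proof reduces entirely to a one-dimensional analysis of the polynomials $P$ and $Q$ to identify the set of admissible radii.

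Next, I would analyze $Q$. Its derivative $\tfrac{Z_4}{2}r^2 + Z_3 r + Z_2$ is non-negative on $[0,\infty)$, so $Q$ is non-decreasing with $Q(0) = -(1-Z_1)$ and $Q(r)\to+\infty$. The hypothesis that some $\br>0$ satisfies $P(\br)<0$ forces $Z_1<1$: otherwise $Q\geq 0$ on $[0,\infty)$, making $P$ non-decreasing, which is incompatible with $P(0)=Y\geq 0$ and $P(\br)<0$. Consequently $Q$ admits a unique positive root $r_{\max}$, and $P$ is strictly decreasing on $(0,r_{\max})$ and strictly increasing on $(r_{\max},\infty)$, with minimum value $P(r_{\max})\leq P(\br)<0$ (regardless of whether $\br$ lies to the left or right of $r_{\max}$, by monotonicity on either side). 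Since $P(0)=Y\geq 0$ and $P$ is strictly decreasing on $(0,r_{\max})$, the intermediate value theorem provides a unique root $r_{\min}\in(0,r_{\max})$ of $P$, so the interval $(r_{\min},r_{\max})$ is non-empty.

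To conclude, for every $r\in(r_{\min},r_{\max})$ the strict monotonicities established above give $P(r)<P(r_{\min})=0$ and $Q(r)<Q(r_{\max})=0$, so both~\eqref{cond:rY_T} and~\eqref{cond:rZ_T} hold and Theorem~\ref{th:T} delivers a unique fixed point of $T$ in $\ball(\barX,r)$. There is no substantial obstacle in this argument: the only subtle points are recognizing that the exact Taylor expansion of $\txtD T$ (afforded by the polynomial-of-degree-$4$ hypothesis) yields the explicit cubic $Z$, and verifying that the hypothesis $P(\br)<0$ forces $Z_1<1$ so that $r_{\max}$ is well-defined and $(r_{\min},r_{\max})$ is non-empty; everything else is routine monotonicity and intermediate-value reasoning on $P$ and $Q=P'$.
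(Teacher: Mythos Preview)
Your proposal is correct and follows essentially the same approach as the paper: define $Z(r)=Z_1+Z_2 r+\tfrac{Z_3}{2}r^2+\tfrac{Z_4}{6}r^3$ via the exact Taylor expansion of $\txtD T$, observe that conditions~\eqref{cond:rY_T} and~\eqref{cond:rZ_T} of Theorem~\ref{th:T} become $P(r)<0$ and $Q(r)<0$, and apply that theorem. You actually supply more detail than the paper's very terse proof, in particular the monotonicity analysis (via $Q=P'$) establishing that $(r_{\min},r_{\max})$ is non-empty, which the paper leaves implicit.
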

\begin{proof}
Just notice that $Z$ defined by
\begin{equation*}
Z(r)=Z_1+Z_2r+\frac{Z_3}{2}r^2+\frac{Z_4}{6}r^3
\end{equation*}
satisfies~\eqref{def:Z_T}. Condition~\eqref{cond:rY_T} is then equivalent to having $P(\br)<0$, while condition~\eqref{cond:rZ_T} is equivalent to having $Q(\br)<0$, and Theorem~\ref{th:T} can then be applied to conclude.
\end{proof}

\begin{remark}
\label{rem:non_pol}
At this stage, having that $T$ is a polynomial is convenient but absolutely not mandatory. If $T$ does not have a finite Taylor expansion (or if it is too cumbersome to use the full Taylor expansion), one can proceed is in Section~\ref{sec:background_validation} and consider an a priori radius $r^*>0$, together with non negative constants $Y$, $Z_1$ and $Z_2$ such that
\begin{subequations}
\begin{align*}
\left\Vert T(\barX)-\barX\right\Vert &\leq Y \\
\left\Vert \txtD T(\barX)\right\Vert &\leq Z_1 \\
\left\Vert \txtD T(X)- \txtD T(\barX) \right\Vert &\leq Z_2 \left\Vert X-\barX \right\Vert,\quad \forall~X\in \ball(\barX,r^*).
\end{align*}
\end{subequations}
We point out that, in practice, such a $Z_2$ can be obtained by bounding \begin{equation*}
\sup\limits_{X\in \ball(\barX,r^*)} \left\Vert \txtD ^2T(X) \right\Vert.
\end{equation*}
Then, defining 
\begin{equation*}
Z(r)=Z_1+Z_2r,
\end{equation*}
condition~\eqref{def:Z_T} holds for all $X\in \ball(\barX,r^*)$. Therefore, if assumptions~\eqref{cond:r_T} are satisfied for some $\br\leq r^*$, we can still conclude that $T$ has a unique fixed point in $\ball(\barX,\br)$.
\end{remark}

\begin{remark}
\label{rem:radii_pol}
$P$ and $Q$ are sometimes called \emph{radii polynomials} in the literature.
\end{remark}

Our goal is to apply this corollary to the operator $\T$ defined just above, with $\barX=\0$, to validate the approximate parameterization defined by $\hat \p$.

\subsection{The bounds needed for the validation}

In this subsection, we obtain computable bounds $Y$ and $Z_i$, $i=1,\ldots,4$, satisfying~\eqref{def:bounds_corT}.

\subsubsection{The bound $Y$}

\begin{proposition}
\label{prop_Y_manifold}
Consider $\T$ defined in~\eqref{def:T_manifold}, $\eta\in\R^6_{>0}$ and let
\begin{equation}
\label{def:Y_manifold}
Y= \sum_{n=N}^{4N-4} \sum_{i=1}^6 \vert \T_n^{(i)}(\0)\vert \eta^{(i)}.
\end{equation}
Then 
\begin{equation*}
\left\Vert \T(\0) \right\Vert_{\X_\eta} \leq Y.
\end{equation*}
\end{proposition}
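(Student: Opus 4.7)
The proof turns on the polynomial nature of the extended vector field $f$. The components $f^{(1)}, f^{(2)}$ are polynomials of total degree $2$ in the extended variable $X$, while each $f^{(j+2)}$ is a product of three factors of degrees $1$, $2$, and $1$, giving total degree $4$. Consequently, if $p(\theta)$ is any polynomial of degree $d$ in $\theta$ with values in $\R^6$, then $f(p(\theta))$ is a polynomial of degree at most $4d$ in $\theta$. This elementary degree count is the only nontrivial ingredient of the proof.

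First I would apply this observation with $p = \hat p$, which by definition has degree $N-1$. Setting $\check \p = \0$ yields $\p = (\hat\p,\0) \in (\R^6)^\N$, and then for every $n \geq N$ one has $\pi_{n-1}(\p) = (\p_0, \dots, \p_{N-1}, 0, \dots, 0)$, so $\f_n(\pi_{n-1}(\p))$ is precisely the $n$-th Taylor coefficient of $f(\hat p(\theta))$. Since $f(\hat p(\theta))$ has degree at most $4(N-1) = 4N-4$ in $\theta$, this coefficient vanishes for every $n > 4N-4$, and therefore $\T_n(\0) = M_n^{-1}\f_n(\pi_{n-1}(\hat\p,\0)) = 0$ for every $n > 4N-4$.

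It then remains only to substitute into Definition~\ref{def:space_manifold}:
\[
\left\Vert \T(\0)\right\Vert_{\X_\eta} = \sum_{i=1}^6 \sum_{n=N}^{\infty} |\T_n^{(i)}(\0)|\,\eta^{(i)} = \sum_{i=1}^6 \sum_{n=N}^{4N-4} |\T_n^{(i)}(\0)|\,\eta^{(i)},
\]
which is exactly the right-hand side of~\eqref{def:Y_manifold}. The inequality in the statement (rather than an equality) simply accommodates the fact that in practice each term $|\T_n^{(i)}(\0)|$ is computed using interval arithmetic and thus yields a numerical upper bound. There is no real technical obstacle: the content of the proposition is that the polynomial reformulation of Section~\ref{sec:polynomial_reformulation} makes the $Y$ bound a genuinely finite, directly computable sum rather than something requiring a separate tail estimate.
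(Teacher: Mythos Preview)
Your proof is correct and follows essentially the same approach as the paper: both rely on the observation that $f$ is quartic and $\hat\p$ has degree $N-1$, so $\T_n(\0)=0$ for all $n>4N-4$. Your version simply spells out the degree count and the substitution into the norm in more detail than the paper's one-line proof.
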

\begin{proof}
Since $f$ is quartic and $\hat\p$ only has coefficients up to order $N-1$, $\T_n(\0)=0$ for all $n> 4N-4$.
\end{proof}

Notice that $Y$ can be rigorously evaluated on a computer (or more precisely upper-bounded, using interval arithmetic).

\subsubsection{The bound $Z_1$}
\label{sec:Z1_manifold}

First, we need to control the norm of $M_n^{-1}$.

\begin{lemma}
\label{lem:norm_Mn_inv1}
Let $n\in\N$ and $\eta\in\R_{>0}^6$ such that the following holds
\begin{equation*}
n\lambda > \left\vert \txtD f(X_0) \right\vert_\eta.
\end{equation*}
Then we can conclude that
\begin{equation*}
\left\vert M_n^{-1} \right\vert_\eta \leq \frac{1}{n\lambda-\left\vert \txtD f(X_0) \right\vert_\eta}.
\end{equation*}
\end{lemma}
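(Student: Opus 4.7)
The plan is to use a standard Neumann series argument. Since $M_n = n\lambda I_6 - \txtD f(X_0)$, I would first factor out $n\lambda$ and write
\begin{equation*}
M_n = n\lambda\left(I_6 - \frac{1}{n\lambda}\txtD f(X_0)\right).
\end{equation*}
The hypothesis $n\lambda > \left\vert \txtD f(X_0)\right\vert_\eta$ (in particular $n\lambda>0$) ensures that $\left\vert \frac{1}{n\lambda}\txtD f(X_0)\right\vert_\eta < 1$, so the operator $I_6 - \frac{1}{n\lambda}\txtD f(X_0)$ is invertible with inverse given by the convergent Neumann series
\begin{equation*}
\left(I_6 - \frac{1}{n\lambda}\txtD f(X_0)\right)^{-1} = \sum_{k=0}^\infty \left(\frac{1}{n\lambda}\txtD f(X_0)\right)^k.
\end{equation*}

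Next I would take $\left\vert\cdot\right\vert_\eta$ on both sides of
\begin{equation*}
M_n^{-1} = \frac{1}{n\lambda}\sum_{k=0}^\infty \left(\frac{1}{n\lambda}\txtD f(X_0)\right)^k
\end{equation*}
and use submultiplicativity of the operator norm together with the triangle inequality to obtain
\begin{equation*}
\left\vert M_n^{-1}\right\vert_\eta \leq \frac{1}{n\lambda}\sum_{k=0}^\infty \left(\frac{\left\vert \txtD f(X_0)\right\vert_\eta}{n\lambda}\right)^k = \frac{1}{n\lambda}\cdot\frac{1}{1-\frac{\left\vert \txtD f(X_0)\right\vert_\eta}{n\lambda}} = \frac{1}{n\lambda-\left\vert \txtD f(X_0)\right\vert_\eta},
\end{equation*}
which is exactly the desired estimate.

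There is no real obstacle here: the only subtlety is making sure the geometric series is summed correctly and that the factor $n\lambda$ is pulled out before bounding. The argument is purely algebraic and uses only properties of the operator norm $\left\vert\cdot\right\vert_\eta$ on $\R^6$ already recalled in Section~\ref{sec:op_blocks_manifold}, so no additional machinery beyond the Neumann series is needed.
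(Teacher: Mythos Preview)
Your proposal is correct and is precisely the standard Neumann series argument the paper invokes; the paper's proof consists of the single sentence ``This follows directly from a standard Neumann series argument,'' and you have simply written out that argument in full.
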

\begin{proof}
This follows directly from a standard Neumann series argument.
\end{proof}

\begin{remark}
\label{rem:eta_manifold}
In practice, the bound on $\left\vert M_n^{-1} \right\vert_\eta$ ends up being one of the critical factors influencing the success or failure of the validation procedure. Therefore, we choose $\eta$ so as to minimize $\left\vert \txtD f(X_0) \right\vert_\eta$ (see Proposition~\ref{prop:frob}), in order to get the smallest possible estimate out of Lemma~\ref{lem:norm_Mn_inv1}.
\end{remark}

The downside of Lemma~\ref{lem:norm_Mn_inv1} is that it can only be used for $n$ larger than $\frac{\left\vert \txtD f(X_0) \right\vert_\eta}{\lambda}$. If one wishes to choose the truncation level $N$ smaller than this threshold, the following alternative bound can be used for the low order modes, i.e., for all modes below the truncation level.

\begin{lemma}
\label{lem:norm_Mn_inv2}
Assume there exists a $6\times 6$ matrix $Q$ such that
\begin{equation*}
\txtD f(X_0)=Q^{-1} \Lambda Q,
\end{equation*}
where
\begin{equation*}
\Lambda=\begin{pmatrix}
\lambda & & & & &  \\
 & \mu & & & &  \\
 & & 0 & & &  \\  
 & & & 0 & &  \\ 
 & & & & 0 &  \\ 
 & & & & & 0 \\  
\end{pmatrix}.
\end{equation*}
Then, for all $\eta\in\R_{>0}^6$ and $n\geq 2$,
\begin{equation*}
\left\vert M_n^{-1} \right\vert_\eta \leq \frac{\left\vert Q^{-1} \right\vert_\eta \left\vert Q \right\vert_\eta}{(n-1)\lambda}.
\end{equation*}
\end{lemma}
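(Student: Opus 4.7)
The plan is to diagonalize $M_n$ using $Q$, reduce to estimating the norm of a diagonal matrix (which is weight-independent), and then apply submultiplicativity of the operator norm $|\cdot|_\eta$.

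First I would write $M_n = n\lambda I_6 - \txtD f(X_0) = Q^{-1}(n\lambda I_6 - \Lambda)Q$, so that
\begin{equation*}
M_n^{-1} = Q^{-1}(n\lambda I_6 - \Lambda)^{-1} Q.
\end{equation*}
The diagonal matrix $n\lambda I_6 - \Lambda$ has diagonal entries $(n-1)\lambda$, $n\lambda - \mu$, $n\lambda$, $n\lambda$, $n\lambda$, $n\lambda$. Since $\mu < 0 < \lambda$ and $n\geq 2$, all these entries are strictly positive, so $(n\lambda I_6 - \Lambda)^{-1}$ is diagonal with entries $\tfrac{1}{(n-1)\lambda}$, $\tfrac{1}{n\lambda - \mu}$, $\tfrac{1}{n\lambda}$, $\tfrac{1}{n\lambda}$, $\tfrac{1}{n\lambda}$, $\tfrac{1}{n\lambda}$, and the first entry is the largest in absolute value.

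Next I would observe that for any diagonal matrix $D = \diag(d_1,\ldots,d_6)$ and any weight $\eta \in \R_{>0}^6$, the formula for the weighted operator norm collapses to
\begin{equation*}
\left\vert D \right\vert_\eta = \max_{1\leq j\leq 6} \frac{1}{\eta^{(j)}} |d_j|\, \eta^{(j)} = \max_{1\leq j\leq 6} |d_j|,
\end{equation*}
since only the diagonal term survives in each column sum. Applied to $(n\lambda I_6 - \Lambda)^{-1}$, this yields $\left\vert (n\lambda I_6 - \Lambda)^{-1}\right\vert_\eta = \tfrac{1}{(n-1)\lambda}$, independently of $\eta$.

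Finally I would combine these two facts via submultiplicativity of the operator norm:
\begin{equation*}
\left\vert M_n^{-1}\right\vert_\eta \leq \left\vert Q^{-1}\right\vert_\eta \left\vert (n\lambda I_6 - \Lambda)^{-1}\right\vert_\eta \left\vert Q \right\vert_\eta = \frac{\left\vert Q^{-1}\right\vert_\eta \left\vert Q \right\vert_\eta}{(n-1)\lambda},
\end{equation*}
which is the claimed bound. There is no real obstacle here: the only thing to check carefully is that $n\geq 2$ ensures $(n-1)\lambda$ is the smallest of the six positive diagonal entries of $n\lambda I_6 - \Lambda$, so that its reciprocal dominates the diagonal of the inverse.
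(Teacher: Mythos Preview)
Your proof is correct and follows the same approach as the paper: diagonalize $M_n^{-1}$ via $Q$, identify $\frac{1}{(n-1)\lambda}$ as the dominant diagonal entry (using $\mu<0$), and conclude by submultiplicativity. You are simply more explicit than the paper about why the weighted norm of a diagonal matrix equals the maximum absolute diagonal entry.
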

\begin{proof}
Just notice that
\begin{equation*}
M_n^{-1}=Q^{-1}\begin{pmatrix}
\frac{1}{(n-1)\lambda} & & & & &  \\
 & \frac{1}{n\lambda-\mu} & & & &  \\
 & & \frac{1}{n\lambda} & & &  \\  
 & & & \frac{1}{n\lambda} & &  \\ 
 & & & & \frac{1}{n\lambda} &  \\ 
 & & & & & \frac{1}{n\lambda} \\  
\end{pmatrix}Q,
\end{equation*}
with $\mu<0$.
\end{proof}
\begin{remark}
In practice, such matrix $Q$ can be obtained numerically and then validated using the function \texttt{verifyeig} from \textsc{Intlab}, combined with the fact that we know a priori by Lemma~\ref{lem:extended_spectrum} that the eigenvalues of $\txtD f(X_0)$ can only be $\lambda$, $\mu$ and $0$.
\end{remark}

Combining the two above lemmas, we define
\begin{equation*}
\mathfrak{M}_n = \left\{\begin{aligned}
&\frac{\left\vert Q^{-1} \right\vert_\eta \left\vert Q \right\vert_\eta}{(n-1)\lambda},\qquad &n\leq\frac{\left\vert \txtD f(X_0) \right\vert_\eta}{\lambda}\\
&\min\left(\frac{1}{n\lambda-\left\vert \txtD f(X_0) \right\vert_\eta},\frac{\left\vert Q^{-1} \right\vert_\eta \left\vert Q \right\vert_\eta}{(n-1)\lambda}\right),\qquad &n>\frac{\left\vert \txtD f(X_0) \right\vert_\eta}{\lambda} 
\end{aligned}\right.
\end{equation*}
%
Next, we introducing the linear operator $\tilde{\M}:\X_\eta\to\X_\eta$
\begin{equation*}
\tilde{\M} = \begin{pmatrix}
M_N^{-1} & & \\
 & M_{N+1}^{-1} &  \\
 & & \ddots  
\end{pmatrix}.
\end{equation*}
We are now ready to define the $Z_1$ bound.

\begin{proposition}
\label{prop:Z1_manifold}
Consider $\T$ defined in~\eqref{def:T_manifold}, $\eta\in\R^6_{>0}$ and let
\begin{align}
\label{def:Z1_manifold}
Z_1 &= \mathfrak{M}_N \left\vert \left\Vert \txtD \g(\hat \p)\right\Vert_1 \right\vert_\eta \nonumber\\
&= \mathfrak{M}_N \max_{1\leq j\leq 6} \frac{1}{\eta^{(j)}}\sum_{i=1}^6 \left\Vert \txtD _j\g^{(i)}(\hat \p) \right\Vert_1. 
\end{align}
Then we get
\begin{equation*}
\left\Vert \txtD \T(\0) \right\Vert_{\X_\eta} \leq Z_1.
\end{equation*}
\end{proposition}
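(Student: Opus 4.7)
The plan is to decompose $\txtD\T(\0)$ as the composition of the block-diagonal ``multiplier'' operator $\tilde\M$ with the derivative of the polynomial map $\g$, and then estimate each factor separately using the computable operator-norm inequality~\eqref{eq:lin_op_norm_computable} on one side and the explicit bounds on $|M_n^{-1}|_\eta$ from Lemmas~\ref{lem:norm_Mn_inv1} and~\ref{lem:norm_Mn_inv2} on the other.

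Concretely, I would first observe that for any tail perturbation $\check\p\in\X_\eta$ (identified with its extension by zero on the indices $0\leq n<N$), the chain rule gives
\begin{equation*}
(\txtD\T(\0)[\check\p])_n = M_n^{-1}\bigl(\txtD\g(\hat\p)[\check\p]\bigr)_n,\qquad n\geq N,
\end{equation*}
because $\T_n(\check\p)=M_n^{-1}\g_n(\p)$ with $\p=(\hat\p,\check\p)$, and $\g_n(\p)=\f_n(\pi_{n-1}(\p))$ depends polynomially on $\p$. Taking the $\X_\eta$-norm and applying the $|\cdot|_\eta$ operator-norm bound on each $6\times 6$ block then yields
\begin{equation*}
\|\txtD\T(\0)[\check\p]\|_{\X_\eta} \leq \Bigl(\sup_{n\geq N}|M_n^{-1}|_\eta\Bigr)\,\|\txtD\g(\hat\p)[\check\p]\|_{\X_\eta},
\end{equation*}
where I would use that the zero-extension of $\check\p$ has the same $\X_\eta$-norm as $\check\p$ itself, and that the modes $n<N$ of $\txtD\g(\hat\p)[\check\p]$ vanish identically since $\g_n$ only depends on $\pi_{n-1}(\p)$.

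Next, I would check that $\sup_{n\geq N}|M_n^{-1}|_\eta \leq \mathfrak{M}_N$. Both expressions $\tfrac{1}{n\lambda-|\txtD f(X_0)|_\eta}$ (where positive) and $\tfrac{|Q^{-1}|_\eta|Q|_\eta}{(n-1)\lambda}$ are decreasing functions of $n$, so their pointwise minimum -- which equals $\mathfrak{M}_n$ in the regime where Lemma~\ref{lem:norm_Mn_inv1} applies, and coincides with the single term from Lemma~\ref{lem:norm_Mn_inv2} for smaller $n$ -- is itself non-increasing. Hence the supremum over $n\geq N$ is attained at $n=N$.

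Finally, I would invoke the computable operator-norm inequality~\eqref{eq:lin_op_norm_computable} applied to the linear operator $\txtD\g(\hat\p):\X_\eta\to\X_\eta$ to obtain
\begin{equation*}
\|\txtD\g(\hat\p)[\check\p]\|_{\X_\eta} \leq \bigl|\|\txtD\g(\hat\p)\|_1\bigr|_\eta\,\|\check\p\|_{\X_\eta},
\end{equation*}
and combine with the previous two displays to conclude $\|\txtD\T(\0)\|_{\X_\eta}\leq Z_1$. No real obstacle is expected here: the monotonicity check for $\mathfrak{M}_n$ and the bookkeeping between tail sequences and their zero-extensions are both minor, and the polynomial structure of $\g$ makes the chain-rule step immediate.
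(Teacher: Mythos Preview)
Your proposal is correct and follows essentially the same line as the paper's proof: factor $\txtD\T(\0)$ as $\tilde\M\,\txtD\g(\hat\p)$, bound $\|\tilde\M\|_{\X_\eta}$ by $\sup_{n\geq N}|M_n^{-1}|_\eta\leq\mathfrak{M}_N$ via Lemmas~\ref{lem:norm_Mn_inv1}--\ref{lem:norm_Mn_inv2}, and bound $\|\txtD\g(\hat\p)\|_{\X_\eta}$ via~\eqref{eq:lin_op_norm_computable}. You add two details that the paper leaves implicit (the monotonicity of $\mathfrak{M}_n$ in $n$, and the tail-versus-full-sequence bookkeeping), but the argument is otherwise identical.
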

\begin{proof}
Using that $\T=\tilde{\M}\g$, Lemma~\ref{lem:norm_Mn_inv1}, Lemma~\ref{lem:norm_Mn_inv2} and then~\eqref{eq:lin_op_norm_computable}, we estimate
\begin{align*}
\left\Vert \txtD \T(\0) \right\Vert_{\X_\eta} &\leq \left\Vert \tilde{\M} \right\Vert_{\X_\eta} \left\Vert \txtD \g(\hat \p)\right\Vert_{\X_\eta} \\
&= \sup_{n\geq N} \left\vert M_n^{-1} \right\vert_\eta \left\Vert \txtD \g(\hat \p)\right\Vert_{\X_\eta} \\
&\leq \mathfrak{M}_N \left\Vert \txtD \g(\hat \p)\right\Vert_{\X_\eta} \\
&\leq \mathfrak{M}_N \left\vert \left\Vert \txtD \g(\hat \p)\right\Vert_1 \right\vert_\eta,
\end{align*}
which finishes the proof.
\end{proof}

We emphasize that the bound $Z_1$ defined just above is computable, since the $\ell^1$ operator norms $\left\Vert \txtD _j\g^{(i)}(\hat \p)\right\Vert_{1}$ are very easy to evaluate. Indeed, the action of the linear operator $\txtD _j\f^{(i)}(\p)$ on $\ell^1$ is nothing but a convolution with the sequence representing the function $\txtD _jf^{(i)}(p)$, and therefore these $\ell^1$ operator norms are simply given by the $\ell^1$ norm of the corresponding vector. For instance, since
\begin{equation*}
\txtD _2f^{(1)}(X)=-\sum_{i=1}^4b^{(i)}X^{(i+2)},
\end{equation*}
we have
\begin{equation*}
\left\Vert \txtD _2\f^{(1)}(\p) \right\Vert_1= \left\Vert \sum_{i=1}^4b^{(i)}\p^{(i+2)}\right\Vert_1.
\end{equation*}
To go back to $\txtD \g(\hat\p)$, notice that, for any $\bu,\bv\in\ell^1$,
\begin{align*}
\left(\bu\star \pi_{n-1} (\bv)\right)_n &= \sum_{k=1}^n \bu_k \bv_{n-k} \\
& = \left(\pi^0(\bu)\star \bv\right)_n,
\end{align*}
where $\pi^0(\bu):=(0,\bu_1,\bu_2,\ldots)$. Finally, using that $\pi_{n-1}(\hat\p)=\hat\p$ for all $n\geq N$, we get that
\begin{equation*}
\txtD \g_n(\hat\p)=\txtD \f_n(\hat\p)\pi_{n-1},
\end{equation*}
and thus the $\ell^1$ operator norm of $\txtD _j\g^{(i)}(\hat \p)$ is nothing but the $\ell^1$ norm of $\pi^0 \bu$, where $\bu$ is the vector representing $\txtD _jf^{(i)}(\hat p)$. For instance, we have 
\begin{equation*}
\left\Vert \txtD _2\g^{(1)}(\hat \p) \right\Vert_1= \left\Vert \pi^0\left(\sum_{i=1}^4b^{(i)}\hat \p^{(i+2)}\right)\right\Vert_1
\end{equation*}
and since $\hat \p$ only has finitely many non zero coefficients, this quantity can be evaluated (or more precisely, upper-bounded using interval arithmetic).

\begin{remark}
\label{rem:gamma}
\textbf{About the role of $\gamma$.} The approximate parameterization $\hat \p$ is uniquely determined by the choice of the scaling $\gamma$ in~\eqref{eq:invariance_coeff}. In this remark, we note $\hat\p=\hat\p(\gamma)$ to highlight this dependency. One crucial observation is that we have
\begin{equation*}
\hat\p_n(\gamma) = \gamma^n \hat\p_n(1),\quad \forall~n\in\N.
\end{equation*}
Besides, denoting by $\bu=\bu(\gamma)$ the vector representing $\txtD _jf^{(i)}(\hat p(\gamma))$, we also have that
\begin{equation*}
\bu_n(\gamma) = \gamma^n \bu_n(1),\quad \forall~n\in\N.
\end{equation*}
Hence, 
\begin{align*}
\left\Vert \txtD _j\g^{(i)}(\hat \p(\gamma)) \right\Vert_1 &= \left\Vert \pi^0(\bu(\gamma))\right\Vert_1 \\
&= \sum_{n= 1}^{3N-2} \left\vert \bu_n(\gamma)\right\vert \\
&= \sum_{n= 1}^{3N-2} \vert\gamma\vert^n \left\vert \bu_n(1)\right\vert.
\end{align*}
Therefore, $\left\Vert \txtD _j\g^{(i)}(\hat \p(\gamma)) \right\Vert_1$ goes to $0$ when $\gamma$ goes to $0$, which means that we can have a $Z_1$ bound that is arbitrarily small (and in particular strictly less than $1$) by taking $\gamma$ small enough. In other words, up to considering a small enough patch of the local manifold, we can always get a contraction.
\end{remark}

\subsubsection{The bounds $Z_k$, $2\leq k\leq 4$}

\begin{proposition}
\label{prop:Zk_manifold}
Consider $\T$ defined in~\eqref{def:T_manifold}, $\eta\in\R^6_{>0}$ and define for $2\leq k\leq 4$
\begin{align}
\label{def:Zk_manifold}
Z_k = \mathfrak{M}_N \max_{1\leq j_1,\ldots,j_k \leq 6} \frac{1}{\eta^{(j_1)}\ldots\eta^{(j_k)}}\sum_{i=1}^6 \left\Vert \txtD ^k_{(j_1,\ldots,j_k)}\g^{(i)}(\hat \p) \right\Vert_1. 
\end{align}
Then
\begin{equation*}
\left\Vert \txtD ^k\T(\0) \right\Vert_{\X_\eta} \leq Z_k.
\end{equation*}
\end{proposition}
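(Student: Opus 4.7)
The structure of the proof mirrors that of Proposition~\ref{prop:Z1_manifold}, the only difference being that instead of a linear operator we have to estimate the norm of a $k$-linear operator. The plan is to factorize $\T = \tilde{\M}\circ \g$, so that $\txtD^k \T(\0) = \tilde{\M}\circ \txtD^k\g(\hat\p)$ as $k$-multilinear maps, and then bound each factor in turn.

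First, I would apply the (multilinear) submultiplicativity of the operator norm to get
\begin{equation*}
\left\Vert \txtD^k\T(\0) \right\Vert_{\X_\eta} \leq \left\Vert \tilde{\M} \right\Vert_{\X_\eta} \left\Vert \txtD^k \g(\hat \p)\right\Vert_{\X_\eta}.
\end{equation*}
The first factor is controlled exactly as in the proof of Proposition~\ref{prop:Z1_manifold}: since $\tilde{\M}$ is block-diagonal with blocks $M_n^{-1}$ for $n\geq N$, the definition of $\mathfrak{M}_N$ combined with Lemmas~\ref{lem:norm_Mn_inv1} and~\ref{lem:norm_Mn_inv2} yields $\left\Vert \tilde{\M} \right\Vert_{\X_\eta} \leq \mathfrak{M}_N$.

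For the second factor, I would invoke the computable upper bound~\eqref{eq:multilin_op_norm_computable} for $k$-linear operators on $\X_\eta$, which gives
\begin{equation*}
\left\Vert \txtD ^k \g(\hat \p)\right\Vert_{\X_\eta} \leq \max\limits_{1\leq j_1,\ldots,j_k\leq 6}  \frac{1}{\eta^{(j_1)}\ldots\eta^{(j_k)}} \sum_{i=1}^6  \left\Vert \txtD ^k_{(j_1,\ldots,j_k)}\g^{(i)}(\hat \p)\right\Vert_1 \eta^{(i)}.
\end{equation*}
Combining the two bounds yields the claimed formula~\eqref{def:Zk_manifold}.

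The only point deserving a comment is that the $\ell^1$ operator norms $\Vert \txtD^k_{(j_1,\ldots,j_k)}\g^{(i)}(\hat\p)\Vert_1$ are genuinely computable. Writing $\g_n(\p)=\f_n(\pi_{n-1}(\p))$ and differentiating, for $n\geq N$ we have $\txtD^k\g_n(\hat\p) = \txtD^k\f_n(\hat\p)\circ(\pi_{n-1})^{\otimes k}$, so each component of $\txtD^k_{(j_1,\ldots,j_k)}\g^{(i)}(\hat\p)$ acts by convolution against the sequence of coefficients of $\txtD^k_{(j_1,\ldots,j_k)}f^{(i)}(\hat p)$ (composed with shifts coming from the $\pi_{n-1}$ projections, as in the $Z_1$ computation). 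Since $f$ is of polynomial degree $4$, $\txtD^k f$ is of degree $\leq 4-k$, and $\hat p$ is a polynomial of degree $N-1$; hence the relevant sequences of convolution coefficients are finitely supported, and their $\ell^1$ norms can be rigorously evaluated with interval arithmetic. I do not expect any real obstacle beyond bookkeeping — the main conceptual content (the norm of $\tilde{\M}$, the multilinear norm bound, and the polynomial nature of $f$) has all been set up in the preceding subsections.
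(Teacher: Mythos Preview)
Your proof is essentially identical to the paper's: factor $\T=\tilde{\M}\g$, bound $\Vert\tilde{\M}\Vert_{\X_\eta}\leq\mathfrak{M}_N$, and apply the multilinear estimate~\eqref{eq:multilin_op_norm_computable}. One small remark: the bound you correctly derive from~\eqref{eq:multilin_op_norm_computable} carries a factor $\eta^{(i)}$ inside the sum, whereas the displayed formula~\eqref{def:Zk_manifold} in the statement omits it (as does~\eqref{def:Z1_manifold}); this appears to be a typo in the paper rather than a flaw in your argument.
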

\begin{proof}
We have 
\begin{align*}
\left\Vert \txtD ^k\T(\0)\right\Vert_{\X_\eta} \leq \mathfrak{M}_N \left\Vert \txtD ^k\g(\hat \p)\right\Vert_{\X_\eta},
\end{align*}
and we estimate the norm of the $k$-linear operator $\txtD ^k\g(\hat \p)$ as in~\eqref{eq:multilin_op_norm_computable}.
\end{proof}

Again, this bound can be evaluated since we can compute the $\ell^1$ operator norms
\begin{equation*}
\left\Vert \txtD ^k_{(j_1,\ldots,j_k)}\g^{(i)}(\hat \p)\right\Vert_1.
\end{equation*}
Indeed, such multilinear operator norm is equal to the $\ell^1$ norm of the vector of Taylor coefficients representing
\begin{equation*}
\txtD ^k_{(j_1,\ldots,j_k)}g^{(i)}(\hat p),
\end{equation*}
and thus can be computed using interval arithmetic.
For instance
\begin{equation*}
\left\Vert \txtD ^2_{(2,3)}\g^{(1)}(\hat \p)\right\Vert_1=\left\vert b^{(1)}\right\vert,
\end{equation*}
and
\begin{equation*}
\left\Vert \txtD ^2_{(1,1)}\g^{(3)}(\hat \p)\right\Vert_1= \left\Vert 2\sum_{j=1}^4\left( (4a^{(1)}a^{(j)}+b^{(1)}b^{(j)})\hat\p^{(3)}\ast\hat\p^{(j+2)}\right)\right\Vert_1.
\end{equation*}

\subsection{Summary}
\label{sec:summary_manifold}

We recall that we explained in Sections~\ref{sec:step_zeros} and~\ref{sec:step_manifold} how to obtain a rigorous enclosure of saddles points and of the associated eigenvalues and eigenvectors. We now show how all the estimates derived up to now can be combined with Corollary~\ref{cor:T} to rigorously validate a local stable manifold around each saddle.

\begin{theorem}
\label{th:recap_manifold}
Let $(x_0,y_0)\in\R^2$ be saddle point of $V$ and consider $X_0$ defined as in~\eqref{def:X} with $z_0=\psi(x_0,y_0)$. Denote by $\lambda>0$ the unstable eigenvalue of $\txtD f(X_0)$ and by $v$ an associated eigenvector. Let $\gamma>0$, $N\in\N_{\geq 2}$ and $\eta\in\R^6_{>0}$.  Assume that 
\begin{equation}
\hat \p:= (\p_0,\ldots,\p_{N-1}) 
\end{equation}
has been computed recursively rigorously (i.e. with interval arithmetic) so that $\p_n$ solves the invariance equation~\eqref{eq:invariance_coeff} for all $n<N$. Consider the operator $\T$ defined in~\eqref{def:T_manifold} and the bounds $Y$, $Z_1$ and $Z_k$, $2\leq k\leq 4$ defined in~\eqref{def:Y_manifold}, \eqref{def:Z1_manifold} and \eqref{def:Zk_manifold} respectively. Finally, assume there exists $\br>0$ such that $P(\br)<0$ and $Q(\br)<0$, with $P$ and $Q$ defined in~\eqref{def:radii_pol_T}. Then there exists $\p_n\in\R^6$, $n\geq N$, such that
\begin{equation*}
\sum_{i=1}^6\sum_{n=N}^\infty \vert \p_n^{(i)}\vert \eta^{(i)} \leq \br,
\end{equation*}
and such that the function $p$ defined as in~\eqref{def:p} satisfies the invariance equation~\eqref{eq:invariance}, that is $p([-1,1])\subset \R^6$ is a local unstable manifold of $X_0$ for the vector field $f$. Besides
\begin{equation*}
\left\{ \left(p^{(1)}(\theta),p^{(2)}(\theta)\right),\ \theta\in [-1,1] \right\}\subset \R^2
\end{equation*}
is a local unstable manifold of $(x_0,y_0)$ for the vector field $-\nabla V$.
\end{theorem}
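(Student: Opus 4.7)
The plan is to apply Corollary~\ref{cor:T} directly to the operator $\T$ defined in~\eqref{def:T_manifold}, viewed as a degree-$4$ polynomial map on the Banach space $(\X_\eta,\|\cdot\|_{\X_\eta})$, centered at $\barX = \0$. Propositions~\ref{prop_Y_manifold}, \ref{prop:Z1_manifold} and~\ref{prop:Zk_manifold} certify that the quantities $Y$, $Z_1$, $Z_2$, $Z_3$, $Z_4$ defined in~\eqref{def:Y_manifold}, \eqref{def:Z1_manifold} and~\eqref{def:Zk_manifold} satisfy the assumptions~\eqref{def:bounds_corT}, and the hypothesis that $P(\br)<0$ and $Q(\br)<0$ is exactly what Corollary~\ref{cor:T} requires. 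This yields a unique fixed point $\check \p$ of $\T$ in $\ball(\0,\br) \subset \X_\eta$, and the bound $\|\check\p\|_{\X_\eta} \leq \br$ is precisely the claimed tail estimate $\sum_{i=1}^6\sum_{n\geq N}|\p_n^{(i)}|\eta^{(i)} \leq \br$.

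Next, set $\p := (\hat\p,\check\p)$. The fixed-point relation $\check\p_n = M_n^{-1}\g_n(\p)$ together with the identity $\f_n(\pi_n(\p))=\f_n(\pi_{n-1}(\p))+\txtD f(X_0)\p_n$ derived in Section~\ref{ssec:framework4} gives $n\lambda\p_n=\f_n(\p)$ for all $n\geq N$. By the recursive construction of $\hat \p$ carried out rigorously on the computer, the same relation holds for $2\leq n<N$, and the cases $n=0,1$ reduce to $\p_0=X_0$ and $\p_1=\gamma v$. Hence the coefficient invariance equation~\eqref{eq:invariance_coeff} holds in full. Because every $\eta^{(i)}>0$ and $\|\p\|_{\X_\eta}<\infty$, each component sequence $\p^{(i)}$ lies in $\ell^1$, so the series~\eqref{def:p} converges absolutely and uniformly on $[-1,1]$ to a function $p$ that is analytic on $(-1,1)$, with $p(0)=X_0$ and $p'(0)=\gamma v$. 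The identity $n\lambda\p_n=\f_n(\p)$ is exactly the equality of the Taylor coefficients of $\lambda\theta p'(\theta)$ and $f(p(\theta))$; by analyticity these two power series coincide on $(-1,1)$, and continuity extends~\eqref{eq:invariance} to $[-1,1]$. The standard parameterization-method argument~\cite{CabFonLla03} (integrating~\eqref{eq:invariance} along $L_t\theta=e^{\lambda t}\theta$) then produces the conjugacy~\eqref{eq:conjugacy} and identifies $p([-1,1])$ as a local unstable manifold of $X_0$ for $f$.

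It remains to descend to the two-dimensional system. Fix $\theta_0\in[-1,1]$: by~\eqref{eq:conjugacy}, the curve $t\mapsto p(e^{\lambda t}\theta_0)$, $t\leq 0$, is an orbit of~\eqref{eq:extended_system} whose limit as $t\to-\infty$ is $p(0)=X_0$. Since $X_0$ was defined with $z_0=\psi(x_0,y_0)$, hypothesis~$(ii)$ of Lemma~\ref{lem:equivalence} applies, so $(p^{(1)}\circ L_t,p^{(2)}\circ L_t)$ solves~\eqref{eq:grad_form}. Varying $\theta_0$ shows that the projected set $\{(p^{(1)}(\theta),p^{(2)}(\theta)):\theta\in[-1,1]\}$ is flow-invariant for $-\nabla V$, passes through $(x_0,y_0)$, is tangent there to the unstable eigenspace of $-\txtD^2V(x_0,y_0)$ (via the matching of unstable eigenvectors furnished by Lemma~\ref{lem:extended_spectrum}), and is swept out by orbits that converge to $(x_0,y_0)$ in backward time, i.e.~it is a local unstable manifold of $(x_0,y_0)$ for $-\nabla V$.

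The only step with a small subtlety is the passage from the coefficient-wise identity $n\lambda\p_n=\f_n(\p)$ to the functional identity $\lambda\theta p'(\theta)=f(p(\theta))$, since one must know that $\f(\p)$ genuinely represents the composition $f\circ p$ on $[-1,1]$; this is handled by the absolute convergence in $\X_\eta$ and the polynomial structure of $f$ (only finitely many convolutions appear in each component). Everything else is routine assembly of ingredients already built in Sections~\ref{sec:background_manifold}--\ref{ssec:framework4}.
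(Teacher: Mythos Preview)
Your proposal is correct and follows exactly the approach implicit in the paper: Theorem~\ref{th:recap_manifold} is a ``recap'' statement with no separate proof in the text, and its content is precisely the assembly you describe---apply Corollary~\ref{cor:T} to $\T$ at $\barX=\0$ using Propositions~\ref{prop_Y_manifold}, \ref{prop:Z1_manifold}, \ref{prop:Zk_manifold}, read off the tail bound, recover~\eqref{eq:invariance_coeff} and hence~\eqref{eq:invariance}, invoke the parameterization-method conjugacy, and descend to $\R^2$ via Lemma~\ref{lem:equivalence}. Your added remarks on absolute convergence and the polynomial structure of $f$ make explicit exactly the small points the paper leaves to the reader.
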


This theorem proves that the function $\hat p$ defined by
\begin{equation*}
\hat p(\theta) =\sum_{n=0}^{N-1} \p_n \theta^n
\end{equation*}
is an approximate parameterization of the local unstable manifold of $X_0$ in the sense that
\begin{equation*}
\left\vert p(\theta)-\hat p(\theta) \right\vert_\eta \leq \br ,\quad \forall~\theta\in[-1,1].
\end{equation*}
We give in Section~\ref{sec:results} several examples (with explicit values of the parameters) of 
applications of Theorem~\ref{th:recap_manifold} to validate local manifolds for the M\"uller-Brown potential.

\section{Validation of connecting orbits}
\label{sec:connection}

\subsection{Background}
\label{sec:background_orbit}

The material introduced in this subsection is standard, and mainly included for the sake of completeness and to fix some notations. The reader familiar with the notions presented here might skip Section~\ref{sec:background_orbit} on first reading, and only refer back when needed.

\subsubsection{Notations}

For $M\in\N_{\geq 1}$, we consider sequences $\bX\in \left(\left(\R^\N\right)^{6}\right)^M\simeq \left(\R^\N\right)^{6\times M} \simeq \left(\R^{6\times M}\right)^\N$. We thus write
\begin{equation*}
\bX=\left(\bX^{(1)},\ldots,\bX^{(M)}\right),\quad\text{where}\quad \bX^{(m)}=\left(\bX^{(m,1)},\ldots,\bX^{(m,6)}\right) \in \left(\R^\N\right)^6.
\end{equation*}
For $k\in\N$, we then denote by $T_k$ the Chebyshev polynomial of order $k$, defined by $T_k(\cos(\theta))=\cos(k\theta)$. We consider $\tau>0$ and a partition $0=t^{(0)}<t^{(1)}<\ldots<t^{(M)}=\tau$. For all $k\in\N$ and $m=1,\ldots,M$, we also introduce the \emph{rescaled} Chebyshev polynomial $T^{(m)}_k$, defined as
\begin{equation*}
T^{(m)}_k(t)=T_k\left(\frac{2t-t^{(m)}-t^{(m-1)}}{t^{(m)}-t^{(m-1)}}\right).
\end{equation*}
We associate to a sequence $\bX\in \left(\left(\R^\N\right)^{6}\right)^M$ the function $X:[0,\tau]\to \R^6$ defined as a piece-wise Chebyshev series by
\begin{equation}
\label{eq:piecewise_cheb}
X(t) = \bX^{(m)}_0+2\sum_{k=1}^\infty \bX^{(m)}_k T^{(m)}_k(t), \quad \forall~t\in(t^{(m-1)},t^{(m)}),\ \forall~m\in\{1,\ldots,M\}.
\end{equation}
Notice that, for all $k\in\N$, $\bX^{(m)}_k=\left(\bX^{(m,1)}_k,\ldots,\bX^{(m,6)}_k\right)\in \R^6$. We also introduce in the next subsection a subspace of $\left(\left(\R^6\right)^\N\right)^M$ for which such series is guaranteed to converge. Given two sequences $\bu,\bv\in\R^N$, we denote by $\bu\ast\bv$ their convolution product
\begin{equation*}
\left(\bu\ast\bv\right)_k =\sum_{l\in\Z} \bu_{\vert l\vert} \bv_{\vert k-l\vert}.
\end{equation*}
In this Section, $\f:\left(\R^\N\right)^6 \to \left(\R^\N\right)^6$ denotes the map such that, for any function $X$ represented by a piece-wise Chebyshev series $\bX$, $\f(\bX)$ are the coefficients of the piece-wise Chebyshev series representation of $f(X)$, that is 
\begin{equation*}
\f^{(m,1)}_k(\bX)=  -\sum_{i=1}^4\left(2a^{(i)}\left(\bX^{(m,1)}\ast \bX^{(m,i+2)}\right)_k+b^{(i)}\left(\bX^{(m,2)}\ast \bX^{(m,i+2)}\right)_k-w^{(i)}_1\bX^{(m,i+2)}_k\right),
\end{equation*}
and so on.

\subsubsection{Norms and Banach spaces}

\begin{definition}
For $\nu>1$, we define the following weighted $\ell^1$ norm. We first introduce the weights
\begin{equation*}
\xi_k(\nu) = \left\{\begin{aligned}
&1 &\quad k=0, \\
&2\nu^k &\quad k\geq 1.
\end{aligned}\right.
\end{equation*} 
For $\bu\in\R^\N$, we then define
\begin{align*}
\left\Vert \bu \right\Vert_\nu &= \sum_{k=0}^\infty \vert \bu_k \vert \xi_k(\nu) \\
&= \vert \bu_0 \vert + 2\sum_{k=1}^\infty \vert \bu_k \vert \nu^k.
\end{align*}
\end{definition}
We recall that we have $\left\Vert \bu\ast\bv \right\Vert_\nu \leq \left\Vert \bu 
\right\Vert_\nu \left\Vert \bv \right\Vert_\nu$.

\begin{definition}
Let $\nu>1$ and $\eta\in \R_{>0}^{6M}$. For $\bu\in\left(\R^\N\right)^6$ we define
\begin{align*}
\left\Vert \bu \right\Vert_{\X_{\eta^{(m)},\nu}} &= \sum_{i=1}^6 \left(\vert \bu^{(i)}_0 \vert + 2\sum_{k=1}^\infty \vert \bu^{(i)}_k \vert \nu^k\right)\eta^{(m,i)} \\
&= \sum_{i=1}^6 \left\Vert \bu^{(i)} \right\Vert_\nu\eta^{(m,i)} \\
&= \left\vert \left\Vert \bu \right\Vert_\nu \right\vert_{\eta^{(m)}},
\end{align*}
with again a slight abuse of notation on the last line, $\left\Vert \bu \right\Vert_\nu$ being understood as $\left(\left\Vert \bu^{(i)} \right\Vert_\nu\right)_{i=1,\ldots,6}$, and for $X\in\R^6$,
\begin{equation*}
\left\vert X \right\vert_{\eta^{(m)}} = \sum_{i=1}^6 \vert X^{(i)} \vert \eta^{(m,i)}.
\end{equation*} 
We then introduce
\begin{equation*}
\X_{\eta^{(m)},\nu}=\left\{ \bu\in \left(\R^\N\right)^{6} ,\ \left\Vert \bu\right\Vert_{\X_{\eta^{(m)},\nu}}<\infty \right\}.
\end{equation*} 
Next, we consider the product space $\X_{\eta,\nu}=\prod_{m=1}^M \X_{\eta^{(m)},\nu}$ endowed with the supremum norm. That is, for $\bX\in \left(\R^\N\right)^{6M}$ we define
\begin{align*}
\left\Vert \bX \right\Vert_{\X_{\eta,\nu}} &= \max_{1\leq m\leq M} \left\Vert \bX^{(m)} \right\Vert_{\X_{\eta^{(m)},\nu}} \\
&= \max_{1\leq m\leq M} \sum_{i=1}^6 \left(\vert \bX^{(m,i)}_0 \vert + 2\sum_{k=1}^\infty \vert \bX^{(m,i)}_k \vert \nu^k\right)\eta^{(m,i)},
\end{align*}
and
\begin{equation*}
\X_{\eta,\nu}=\left\{ \bX\in \left(\R^\N\right)^{6M} ,\ \left\Vert \bX\right\Vert_{\X_{\eta,\nu}}<\infty \right\}.
\end{equation*}
\end{definition}

\subsubsection{Operator norms}

We use the same block-representation as in Section~\ref{sec:op_blocks_manifold}, with one extra layer. We write a linear operator $\A$ on $\X_{\eta,\nu}$ as 
\begin{equation*}
\A=\begin{pmatrix}
\A^{(1;1)} & \ldots & \A^{(1;M)} \\
\vdots & \ddots & \vdots \\ 
\A^{(M;1)} & \ldots & \A^{(M;M)} \\  
\end{pmatrix},
\end{equation*}
where $\A^{(m;n)}$ is a linear operator from $\X_{\eta^{(n)},\nu}$ to $\X_{\eta^{(m)},\nu}$ and can be written himself in block-form
\begin{equation*}
\A^{(m;n)}=\begin{pmatrix}
\A^{(m,1;n,1)} & \ldots & \A^{(m,1;n,6)} \\
\vdots & \ddots & \vdots \\ 
\A^{(m,6;n,1)} & \ldots & \A^{(m,6;n,6)} \\  
\end{pmatrix},
\end{equation*}
each $\A^{(m,i;n,j)}$ being a linear operator on $\ell^1_\nu$. We also write
\begin{equation*}
\A^{(m)}=\begin{pmatrix}
\A^{(m;1)} & \ldots & \A^{(m;M)} 
\end{pmatrix},
\end{equation*}
the $m$-th "operator row" of $\A$. Notice that, for $\bX\in\X_{\eta,\nu}$, we have
\begin{align*}
\left(\A\bX\right)^{(m)} &= \A^{(m)}\bX \\
&= \sum_{n=1}^M \A^{(m;n)}\bX^{(n)}
\end{align*}
We slightly abuse the notation by applying the $\ell^1_\nu$ operator norm component wise to $\A^{(m;n)}$, that is
\begin{equation*}
\left\Vert \A^{(m;n)}\right\Vert_\nu=\begin{pmatrix}
\left\Vert\A^{(m,1;n,1)}\right\Vert_\nu & \ldots & \left\Vert\A^{(m,1;n,6)}\right\Vert_\nu \\
\vdots & \ddots & \vdots \\ 
\left\Vert\A^{(m,6;n,1)}\right\Vert_\nu & \ldots & \left\Vert\A^{(m,6;n,6)}\right\Vert_\nu \\  
\end{pmatrix}.
\end{equation*}
We recall that
\begin{align}
\label{eq:ell_1_nu_op_norm}
\left\Vert \A^{(m,i;n,j)} \right\Vert_\nu = \sup_{l\in\N} \frac{1}{\xi_l(\nu)} \sum_{k\in\N} \left\vert \A^{(m,i;n,j)}_{k,l} \right\vert \xi_k(\nu).
\end{align}
We also introduce a notation for weighted $1$-operator norms with two different sets of weights: 
\begin{equation*}
\left\vert \left\Vert \A^{(m;n)}\right\Vert_\nu \right\vert_{\eta^{(n)}\to\eta^{(m)}} = \max_{1\leq j\leq 6} \frac{1}{\eta^{(n,j)}} \sum_{i=1}^6 \left\Vert \A^{(m,i;n,j)} \right\Vert_\nu \eta^{(m,i)}
\end{equation*}
We then have
\begin{align}
\label{eq:lin_op_norm_eta_nu}
\left\Vert \A^{(m)} \right\Vert_{\X_{\eta,\nu}\to\X_{\eta^{(m)},\nu}} & \leq \sum_{n=1}^M \left\Vert \A^{(m;n)} \right\Vert_{\X_{\eta^{(n)},\nu}\to\X_{\eta^{(m)},\nu}} \nonumber\\
& \leq \sum_{n=1}^M \left\vert \left\Vert \A^{(m;n)}\right\Vert_\nu \right\vert_{\eta^{(n)}\to\eta^{(m)}}.
\end{align}

\subsection{Framework}
\label{sec:framework_orbit}

We want $X$ to satisfy
\begin{equation*}
X'=f(X),\quad X(0)=p(1),\quad X(\tau) \in W^{\textnormal{s}}_{\textnormal{loc}}(M).
\end{equation*}
If we look for a piece-wise Chebyshev series representation of $X$ as in~\eqref{eq:piecewise_cheb}, we obtain the following equations on the coefficients $\bX$
\begin{equation*}
\left\{\begin{aligned}
&k\bX^{(m)}_k=\frac{t^{(m)}-t^{(m-1)}}{4}\left(\f^{(m)}_{k-1}(\bX)-\f^{(m)}_{k+1}(\bX)\right),\quad \forall~k\geq 1,\ \forall~1\leq m\leq M \\
&\bX^{(1)}_0+2\sum_{k=1}^\infty (-1)^k\bX^{(1)}_k=p(1), \\
&\bX^{(m)}_0+2\sum_{k=1}^\infty \bX^{(m)}_k = \bX^{(m+1)}_{0}+2\sum_{k=1}^\infty (-1)^k\bX^{(m+1)}_{k},\quad \forall~1\leq m\leq M-1 \\
&\bX^{(M)}_0+2\sum_{k=1}^\infty \bX^{(M)}_k \in W^{\textnormal{s}}_{\textnormal{loc}}(M).
\end{aligned}\right.
\end{equation*}
The derivation of these equations follows readily (see for instance~\cite{LesRei14,BerShe15}) from well known properties of the Chebyshev polynomials, namely
\begin{equation*}
T_k(1)=1,\quad T_k(-1)=(-1)^k\quad \text{and} \quad \int T_k =\frac{1}{2}\left(\frac{T_{k+1}}{k+1}-\frac{T_{k-1}}{k-1}\right).
\end{equation*}
We define $\F$ by
\begin{align}
\label{def:F_orbit}
\F^{(m)}_0(\bX) &= \bX^{(m)}_0+2\sum_{k=1}^\infty (-1)^k\bX^{(m)}_k-\left(\bX^{(m-1)}_0+2\sum_{k=1}^\infty \bX^{(m-1)}_k\right),\quad &\forall~1\leq m\leq M \nonumber\\
\F^{(m)}_k(\bX) &= k\bX^{(m)}_k-\frac{t^{(m)}-t^{(m-1)}}{4}\left(\f^{(m)}_{k-1}(\bX)-\f^{(m)}_{k+1}(\bX)\right) ,\quad &\forall~1\leq m\leq M,\ \forall~k\geq 1,
\end{align}
with the convention $\bX^{(0)}_0+2\sum_{k=1}^\infty \bX^{(0)}_k=p(1)$, and we want to validate a zero $\bX$ of $\F$ in $\X_{\eta,\nu}$. The main difference with the situation considered in Section~\ref{sec:manifold} is that we cannot solve first for a finite number of modes and then get a contraction for the tail, because the system is now fully coupled (since we use Chebyshev series) instead of triangular (since we used Taylor series for the manifold).  To be precise, one can still solve \emph{numerically} for a finite number of modes by considering a truncated system, but the obtained numerical solution and the tail must both be validated simultaneously. To do so, we introduce a Newton-like reformulation, based on a truncated system, that is suitable for our a posteriori validation procedure. We denote by $\pi_K:\X_{\eta,\nu}\to \R^{6MK}$ the finite dimensional projection obtained by truncating the Chebyshev modes of order $K$ and higher, that is
\begin{equation*}
\pi_K(\bX)=(\bX_0,\ldots,\bX_{K-1}).
\end{equation*}
We also denote by $\imath_K$ the natural injection from $\R^{6MK}$ to $\X_{\eta,\nu}$. First, we compute an approximate solution $\bar\bX$ by solving numerically the finite dimensional problem $\F^{[K]}=\pi_K \circ \F \circ \imath_K =0$. We use the same notation to denote $\bar \bX\in\R^{6MK}$ and its injection in $\X_{\eta,\nu}$. Then, we define the linear operator $\A^\dag$ by
\begin{equation}
\label{def:Adag}
\left\{\begin{aligned}
&\A^\dag \pi_K \bX = \txtD \F^{[K]}(\bar\bX) \pi_K \bX, \\
&\A^\dag \bX_k = k \bX_k,\quad \forall~k\geq K.
\end{aligned}\right.
\end{equation}
Next, we compute $\A^{[K]}$ an approximate inverse of $\txtD \F^{[K]}(\bar\bX)$, and define the linear operator $\A$ by
\begin{equation}
\label{def:A}
\left\{\begin{aligned}
&\A \pi_K \bX = \A^{[K]} \pi_K \bX, \\
&\A \bX_k = \frac{1}{k}\bX_k,\quad \forall~k\geq K.
\end{aligned}\right.
\end{equation}
The operator $\A$ enables us to recover an equivalent fixed-point formulation which should give a contraction around $\bar \bX$, by considering the Newton-like operator
\begin{equation*}
I_{\X_{\eta,\nu}}-\A\F.
\end{equation*}
Our goal is to apply a variant of Theorem~\ref{th:T} to this Newton-like operator, to validate $\bar \bX$ by proving the existence of a true zero of $\F$ in a neighborhood of $\bar \bX$.

\begin{corollary}
\label{cor:F}
Let $M\in\N_{\geq 1}$, $\left(\XX^{(m)},\left\Vert\cdot\right\Vert_{\XX^{(m)}}\right)$ and  $\left(\YY^{(m)},\left\Vert\cdot\right\Vert_{\YY^{(m)}}\right)$ be Banach spaces, for all $1\leq m\leq M$. Consider the product spaces 
\begin{equation*}
\XX = \prod_{m=1}^M \XX^{(m)},\quad \text{and}\quad \YY = \prod_{m=1}^M \YY^{(m)}
\end{equation*}
endowed with the norms 
\begin{equation*}
\left\Vert\cdot\right\Vert_{\XX} = \max\limits_{1\leq m\leq M} \left\Vert\cdot\right\Vert_{\XX^{(m)}},\quad \text{and}\quad \left\Vert\cdot\right\Vert_{\YY} = \max\limits_{1\leq m\leq M} \left\Vert\cdot\right\Vert_{\YY^{(m)}}.
\end{equation*}
Consider $F=\left(F^{(m)}\right)_{1\leq m\leq M}:\XX\to\YY$ a polynomial map of degree 4, $\barX\in\XX$, $A^\dag:\XX\to\YY$ a linear operator, and $A:\YY\to\XX$ an injective linear operator. Finally, assume that, for all $1\leq m\leq M$, $Y^{(m)}$, $Z_0^{(m)}$, $Z_1^{(m)}$, $Z_2^{(m)}$, $Z_3^{(m)}$, $Z_4^{(m)}$ are non negative constants such that
\begin{subequations}
\label{def:bounds_corF}
\begin{align}
\label{def:Y_corF}
\left\Vert \left(AF(\barX)\right)^{(m)}\right\Vert_{\XX^{(m)}} &\leq Y^{(m)} \\
\label{def:Z0_corF}
\left\Vert \left(I-AA^\dag\right)^{(m)}\right\Vert_{\XX\to\XX^{(m)}} &\leq Z_0^{(m)} \\
\label{def:Z1_corF}
\left\Vert \left(A(\txtD F(\barX)-A^\dag)\right)^{(m)}\right\Vert_{\XX\to\XX^{(m)}} &\leq Z_1^{(m)} \\
\label{def:Z2_corF}
\left\Vert \left(A\txtD ^2F(\barX)\right)^{(m)}\right\Vert_{\XX^2\to\XX^{(m)}} &\leq Z_2^{(m)} \\
\label{def:Z3_corF}
\left\Vert \left(A\txtD ^3F(\barX)\right)^{(m)}\right\Vert_{\XX^3\to\XX^{(m)}} &\leq Z_3^{(m)} \\
\label{def:Z4_corF}
\left\Vert \left(A\txtD ^4F(\barX)\right)^{(m)}\right\Vert_{\XX^4\to\XX^{(m)}} &\leq Z_4^{(m)}.
\end{align}
\end{subequations}
Consider, for all $1\leq m\leq M$,
\begin{subequations}
\label{def:radii_pol_F}
\begin{align}
P^{(m)}(r) &=\frac{Z_4^{(m)}}{24}r^4+\frac{Z_3^{(m)}}{6}r^3+\frac{Z_2^{(m)}}{2}r^2-\left(1-Z_1^{(m)}-Z_0^{(m)}\right)r+Y^{(m)} \\
Q^{(m)}(r) &=\frac{Z_4^{(m)}}{6}r^3+\frac{Z_3^{(m)}}{2}r^2+\left(Z_2\right)^{(m)}r-\left(1-Z_1^{(m)}-Z_0^{(m)}\right).
\end{align}
\end{subequations}
Assume that, for all $1\leq m\leq M$, $P^{(m)}$ has a positive root, and denote by $r_{min}^{(m)}$ the smallest positive root of $P^{(m)}$ and by $r_{max}^{(m)}$ is the unique positive root of $Q^{(m)}$. Finally, define
\begin{equation*}
r_{min}=\max\limits_{1\leq m\leq M} r_{min}^{(m)} \quad\text{and}\quad r_{max}=\min\limits_{1\leq m\leq M} r_{max}^{(m)}.
\end{equation*}
If $r_{min}<r_{max}$, then for all $r$ in $(r_{min},r_{max})$, $F$ has a unique zero in $\ball(\barX,r)$.
\end{corollary}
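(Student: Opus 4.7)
The plan is to apply the contraction-mapping argument of Theorem~\ref{th:T} (more precisely its polynomial refinement, Corollary~\ref{cor:T}) to the Newton-like operator $T := I_{\XX} - AF \colon \XX \to \XX$, adapted to the product space structure with the supremum norm. Since $A$ is injective, fixed points of $T$ coincide with zeros of $F$, so it suffices to show that for each $r \in (r_{min},r_{max})$, $T$ has a unique fixed point in $\ball(\barX,r)$.

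First I would derive componentwise bounds on $T$. From $T(\barX)-\barX = -AF(\barX)$, $\txtD T(\barX) = (I_{\XX}-AA^\dag) - A(\txtD F(\barX)-A^\dag)$, and $\txtD^k T = -A\txtD^k F$ for $k \geq 2$, the hypotheses~\eqref{def:bounds_corF} together with the triangle inequality immediately give
\begin{equation*}
\left\Vert T^{(m)}(\barX)-\barX^{(m)} \right\Vert_{\XX^{(m)}} \leq Y^{(m)}, \qquad \left\Vert \txtD T^{(m)}(\barX) \right\Vert_{\XX \to \XX^{(m)}} \leq Z_0^{(m)} + Z_1^{(m)},
\end{equation*}
and $\left\Vert \txtD^k T^{(m)}(\barX) \right\Vert_{\XX^k \to \XX^{(m)}} \leq Z_k^{(m)}$ for $k=2,3,4$. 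Since $F$, and hence $T$, is polynomial of degree $4$, the Taylor expansion of $\txtD T^{(m)}(X)$ around $\barX$ terminates, yielding for all $X \in \XX$
\begin{equation*}
\left\Vert \txtD T^{(m)}(X) \right\Vert_{\XX \to \XX^{(m)}} \leq Z^{(m)}\!\left(\left\Vert X-\barX \right\Vert_{\XX}\right), \quad Z^{(m)}(r) := Z_0^{(m)} + Z_1^{(m)} + Z_2^{(m)} r + \tfrac{Z_3^{(m)}}{2} r^2 + \tfrac{Z_4^{(m)}}{6} r^3.
\end{equation*}

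Next, fix $r \in (r_{min},r_{max})$ and $X,X' \in \ball(\barX,r)$. Integrating $\txtD T^{(m)}$ along the segment from $X'$ to $X$ (which stays in the ball by convexity) and using the monotonicity of $Z^{(m)}$ give the Lipschitz bound $\left\Vert T^{(m)}(X)-T^{(m)}(X') \right\Vert_{\XX^{(m)}} \leq Z^{(m)}(r) \left\Vert X-X' \right\Vert_{\XX}$. Integrating instead along $[\barX,X]$ and applying the triangle inequality as in the proof of Theorem~\ref{th:T} yields $\left\Vert T^{(m)}(X)-\barX^{(m)} \right\Vert_{\XX^{(m)}} \leq Y^{(m)} + \int_0^r Z^{(m)}(s)\, \txtd s$. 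Direct evaluation of the integral shows that the self-mapping inequality $Y^{(m)} + \int_0^r Z^{(m)}(s)\,\txtd s < r$ is exactly $P^{(m)}(r)<0$, while the contraction inequality $Z^{(m)}(r)<1$ is exactly $Q^{(m)}(r)<0$.

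Finally, the sup-norm structure of $\XX$ means that $T(\ball(\barX,r)) \subset \ball(\barX,r)$ and the global Lipschitz constant of $T$ being strictly less than $1$ both hold exactly when $P^{(m)}(r)<0$ and $Q^{(m)}(r)<0$ for every $m$. The key observation is $(P^{(m)})' = Q^{(m)}$, so $P^{(m)}$ is strictly decreasing on $[0,r_{max}^{(m)}]$; combined with the assumption that $P^{(m)}$ has a positive root, this forces $P^{(m)}<0$ throughout $(r_{min}^{(m)}, r_{max}^{(m)})$, while $Q^{(m)}<0$ holds there by monotonicity of $Q^{(m)}$ on $\R_{\geq 0}$. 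Intersecting these intervals over $m$ gives exactly $(r_{min},r_{max})$, and for any $r$ inside, Banach's fixed-point theorem applied to $T$ on the complete ball $\ball(\barX,r)$ yields the unique fixed point. The main obstacle is simply the careful bookkeeping of componentwise norms on the product space; conceptually the argument is identical to that of Theorem~\ref{th:T}.
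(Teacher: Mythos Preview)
Your proof is correct and follows essentially the same approach as the paper: define the Newton-like operator $T=I_{\XX}-AF$, establish the componentwise bounds via $Z^{(m)}(r)=Z_0^{(m)}+Z_1^{(m)}+Z_2^{(m)}r+\tfrac{Z_3^{(m)}}{2}r^2+\tfrac{Z_4^{(m)}}{6}r^3$, and then repeat the argument of Theorem~\ref{th:T} component by component, concluding via injectivity of $A$. The paper's proof is a three-line sketch that simply says ``following the proof of Theorem~\ref{th:T}, component-wise''; you have spelled out exactly those details, including the useful observation $(P^{(m)})'=Q^{(m)}$ that pins down the interval $(r_{min}^{(m)},r_{max}^{(m)})$.
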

\begin{proof}
We consider the fixed point operator $T=I-AF$. Defining
\begin{equation*}
Z^{(m)}(r)=Z_0^{(m)} + Z_1^{(m)}+Z_2^{(m)}r+\frac{Z_3^{(m)}}{2}r^2+\frac{Z_4^{(m)}}{6}r^3,
\end{equation*}
and following the proof of Theorem~\ref{th:T}, component-wise, shows that $T$ is a contraction on $\ball(\barX,r)$, for all $r$ in $(r_{min},r_{max})$. This concludes the proof, since fixed points of $T$ correspond to zeros of $F$ by injectivity of $A$.
\end{proof}

Remarks~\ref{rem:non_pol} and~\ref{rem:radii_pol} also apply here. We now want to use this corollary for $\F$, $\A^\dag$ and $\A$ defined just above, to validate the approximate orbit defined by $\bar \bX$.

\subsection{The bounds needed for the validation}

In this subsection, we obtain computable bounds $Y^{(m)}$ and $Z_i^{(m)}$, $i=1,\ldots,4$, $m=1,\ldots,M$, satisfying~\eqref{def:bounds_corF}.

\subsubsection{The bound $Y$}

Since $f$ is polynomial and $\bX$ only has finitely many non zero coefficients, so does $\F(\bX)$, and hence 
\begin{equation}
\label{def:Y_orbit}
Y^{(m)} = \left\Vert \left(\A\F(\bX)\right)^{(m)}\right\Vert_{\X_{\eta^{(m)},\nu}} 
\end{equation}
can be evaluated on a computer using interval arithmetic.

\begin{remark}
This bound requires the evaluation of the parameterization $p$, since it involves $F^{(0)}_0(\bar \bX)$ which is defined as
\begin{equation*}
F^{(0)}_0(\bar \bX)= \bX^{(m)}_0+2\sum_{k=1}^\infty (-1)^k\bX^{(m)}_k - p(1).
\end{equation*} 
In practice, we only have an approximate parameterization $\hat p$, together with a validated error bound $\br$, see Section~\ref{sec:summary_manifold}. Thus, whenever we have to evaluate $p(1)$ we use
\begin{equation*}
p^{(i)}(1)\in \hat p^{(i)}(1) + \left[-\frac{\br}{\eta^{(i)}},\frac{\br}{\eta^{(i)}}\right]\quad \forall~1\leq i\leq 6.
\end{equation*}
\end{remark}

\subsubsection{The bound $Z_0$}

\begin{proposition}
\label{prop:Z0_orbit}
Let $\nu>1$ and $\eta\in\R^{6M}_{>0}$. Consider $\A^\dag$ and $\A$ as in~\eqref{def:Adag} and~\eqref{def:A}, and define $\B=I_{\X_{\eta,\nu}}-\A\A^\dag$. Then, for all $1\leq m\leq M$
\begin{align}
\label{def:Z0_orbit}
Z_0^{(m)} &= \sum_{n=1}^M \left\vert \left\Vert \B^{(m;n)}\right\Vert_\nu \right\vert_{\eta^{(n)}\to\eta^{(m)}} \nonumber \\
&= \sum_{n=1}^M\max_{1\leq j\leq 6} \frac{1}{\eta^{(n,j)}} \sum_{i=1}^6 \left\Vert \B^{(m,i;n,j)} \right\Vert_\nu \eta^{(m,i)} \nonumber \\
&= \sum_{n=1}^M\max_{1\leq j\leq 6} \frac{1}{\eta^{(n,j)}} \sum_{i=1}^6 \max_{0\leq l\leq K-1} \frac{1}{\xi_l(\nu)}\sum_{k=0}^{K-1} \left\vert \B^{(m,i;n,j)}_{k,l} \right\vert \xi_k(\nu) \eta^{(m,i)}
\end{align}
satisfies
\begin{equation*}
\left\Vert \left(I_{\X_{\eta,\nu}}-\A\A^\dag\right)^{(m)}\right\Vert_{\X_{\eta,\nu}\to\X_{\eta^{(m)},\nu}} \leq Z_0^{(m)}.
\end{equation*}
\end{proposition}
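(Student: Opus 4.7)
The plan is to exploit the block structure of $\A$ and $\A^\dag$ to reduce the infinite-dimensional operator norm of $\B = I_{\X_{\eta,\nu}} - \A\A^\dag$ to a finite-dimensional computation, and then apply the operator norm estimates developed in Section~\ref{sec:background_orbit}.

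First I would verify that $\A\A^\dag$ acts as the identity on the tail modes. Directly from the definitions~\eqref{def:Adag} and~\eqref{def:A}: for any $\bX \in \X_{\eta,\nu}$ and any $k \geq K$, one has $(\A^\dag \bX)_k = k \bX_k$, and then $\A$ divides each such tail coefficient by $k$, so $(\A\A^\dag \bX)_k = \bX_k$ for all $k \geq K$. On the finite-dimensional part $\pi_K$, $\A^\dag$ coincides with $\txtD \F^{[K]}(\bar\bX)$ and $\A$ with $\A^{[K]}$, which is a numerical approximation rather than an exact inverse, so $\pi_K(\B\bX)$ is not identically zero but depends only on $\pi_K\bX$. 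Consequently, each block $\B^{(m,i;n,j)}$ has matrix entries $\B^{(m,i;n,j)}_{k,l} = 0$ whenever $k \geq K$ or $l \geq K$.

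Next I would plug this support property into the $\ell^1_\nu$ operator norm formula~\eqref{eq:ell_1_nu_op_norm}: since $\B^{(m,i;n,j)}_{k,l}$ vanishes outside $\{0,\ldots,K-1\}^2$, the supremum over $l \in \N$ collapses to a maximum over $l \in \{0,\ldots,K-1\}$ and the series over $k$ becomes a finite sum up to $K-1$, giving
\begin{equation*}
\left\Vert \B^{(m,i;n,j)} \right\Vert_\nu = \max_{0\leq l\leq K-1} \frac{1}{\xi_l(\nu)} \sum_{k=0}^{K-1} \left\vert \B^{(m,i;n,j)}_{k,l} \right\vert \xi_k(\nu).
\end{equation*}
This is a finite quantity that can be rigorously enclosed using interval arithmetic, since $\B^{(m,i;n,j)}_{k,l}$ for $k,l < K$ is computed from $I - \A^{[K]}\txtD \F^{[K]}(\bar\bX)$, a purely finite-dimensional matrix product.

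Finally I would apply the product-space operator norm estimate~\eqref{eq:lin_op_norm_eta_nu} to $\B^{(m)}$, yielding
\begin{equation*}
\left\Vert \B^{(m)} \right\Vert_{\X_{\eta,\nu}\to\X_{\eta^{(m)},\nu}} \leq \sum_{n=1}^M \left\vert \left\Vert \B^{(m;n)}\right\Vert_\nu \right\vert_{\eta^{(n)}\to\eta^{(m)}},
\end{equation*}
and then unfold the weighted $1$-operator norm definition to recover the explicit triple-max-sum expression in~\eqref{def:Z0_orbit}. No step is really an obstacle here; the main point is the structural observation that $I - \A\A^\dag$ is supported entirely on the finite truncation $\pi_K$, so that the bound reduces to something a computer can actually evaluate. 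The only care required is to keep track of the two layers of block decomposition (over the $M$ Chebyshev sub-intervals and over the $6$ extended coordinates) and to apply the mixed-weight estimates~\eqref{eq:ell_1_nu_op_norm} and~\eqref{eq:lin_op_norm_eta_nu} in the correct order.
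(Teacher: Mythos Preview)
Your proposal is correct and follows exactly the approach of the paper, which simply cites~\eqref{eq:ell_1_nu_op_norm} and~\eqref{eq:lin_op_norm_eta_nu}; you have merely spelled out in more detail why each block $\B^{(m,i;n,j)}$ is supported on $\{0,\ldots,K-1\}^2$, which is precisely the structural observation that makes those two estimates yield the finite formula in~\eqref{def:Z0_orbit}.
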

\begin{proof}
The result follows immediately from~\eqref{eq:ell_1_nu_op_norm} and~\eqref{eq:lin_op_norm_eta_nu}.
\end{proof}

We point out that, by construction of $\A^\dag$ and $\A$, each block $\B^{(m,i;n,j)}$ is finite (only the first $K\times K$ coefficients are potentially non zero), and hence the $\ell^1_\nu$ operator norms $\left\Vert \B^{(m,i;n,j)}\right\Vert_\nu$ can all be evaluated on a computer.

\subsubsection{The bound $Z_1$}
\label{sec:Z1_orbit}

\begin{proposition}
\label{prop:Z1_orbit}
Let $\nu>1$, $\eta\in\R^{6M}_{>0}$ and $\bar \bX\in\R^{6MK}$ (again identified with its injection in $\X_{\eta,\nu}$). Consider $\F$, $\A^\dag$ and $\A$ as in~\eqref{def:F_orbit},~\eqref{def:Adag} and~\eqref{def:A}. Define $\bC=\A(\txtD \F(\bar\bX)-\A^\dag)$ and 
\begin{equation*}
\fC^{(m,i;n,j)}_{{\textnormal{finite}}}=\max_{0\leq l\leq 4K-2} \frac{1}{\xi_l(\nu)} \sum_{k\in\N} \left\vert \bC^{(m,i;n,j)}_{k,l} \right\vert \xi_k(\nu).
\end{equation*}
Let $\bu^{(n;l,j)}$ be the vector of Chebyshev coefficients representing the partial derivative $\txtD _{(n,j)}f^{(n,l)}(\bar X)$ and define
\begin{equation*}
\left(\bu^{(n;l,j)}_{\mp}\right)_k =\left\{\begin{aligned}
& 0 \quad & k=0 \\
& \bu^{(n;l,j)}_{k-1}-\bu^{(n;l,j)}_{k+1}\quad & k\geq 1, \end{aligned}\right.
\end{equation*}
and
\begin{equation*}
\fC^{(m,i;n,j)}_{{\textnormal{tail}}}=\frac{2}{\nu^K}\left(\left\Vert \A^{(m,i;n,j)}_{\cdot,0}\right\Vert_\nu + \left\Vert \A^{(m,i;n+1,j)}_{\cdot,0}\right\Vert_\nu\right) +  \frac{\delta_{m,n}}{K}\frac{t^{(m)}-t^{(m-1)}}{4} \left\Vert \bu^{(n;i,j)}_{\mp}\right\Vert_\nu,
\end{equation*}
with the convention that $\left\Vert \A^{(m,i;n+1,j)}_{\cdot,s}\right\Vert_\nu=0$ if $n=M$. Finally, define 
\begin{equation*}
\fC^{(m,i;n,j)} = \max\left(\fC^{(m,i;n,j)}_{{\textnormal{finite}}},\fC^{(m,i;n,j)}_{{\textnormal{tail}}}\right)
\end{equation*}
and
\begin{align}
\label{def:Z1_orbit}
Z_1^{(m)} &= \sum_{n=1}^M \left\vert \fC^{(m;n)}\right\vert_{\eta^{(n)}\to\eta^{(m)}} \nonumber \\
&= \sum_{n=1}^M \max_{1\leq j\leq 6} \frac{1}{\eta^{(n,j)}} \sum_{i=1}^6 \fC^{(m,i;n,j)} \eta^{(m,i)}.
\end{align}
Then, for all $1\leq m\leq M$
\begin{equation*}
\left\Vert \left(\A(\txtD \F(\bar\bX)-\A^\dag)\right)^{(m)}\right\Vert_{\X_{\eta,\nu}\to\X_{\eta^{(m)},\nu}} \leq Z_1^{(m)}.
\end{equation*}
\end{proposition}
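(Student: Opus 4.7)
The plan is to combine the reduction~\eqref{eq:lin_op_norm_eta_nu} to blockwise $\ell^1_\nu$ operator norms with the column-sum formula~\eqref{eq:ell_1_nu_op_norm}, which reduces the task to proving the block-wise estimate $\left\|\bC^{(m,i;n,j)}\right\|_\nu\leq \max\bigl(\fC^{(m,i;n,j)}_{\textnormal{finite}},\fC^{(m,i;n,j)}_{\textnormal{tail}}\bigr)$; the formula for $Z_1^{(m)}$ given in~\eqref{def:Z1_orbit} then matches~\eqref{eq:lin_op_norm_eta_nu} applied to $\bC^{(m)}$.

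The first ingredient is a structural analysis of $\bC=\A(\txtD\F(\bar\bX)-\A^\dag)$. Both $\A^\dag$ and $\A$ are block-diagonal with respect to the splitting $\{k<K\}\cup\{k\geq K\}$: on the finite part they act as $\txtD\F^{[K]}(\bar\bX)$ and $\A^{[K]}$, and on the tail as scalar diagonals $k\cdot\mathrm{id}$ and $\frac{1}{k}\mathrm{id}$. Since $\txtD\F^{[K]}(\bar\bX)$ is the restriction of $\txtD\F(\bar\bX)$ to the first $K$ modes, this yields $\bC^{(m,i;n,j)}_{k,l}=0$ on $k,l<K$, and on $k,l\geq K$ the $-k\delta_{k,l}\delta_{mn}\delta_{ij}$ from $-\A^\dag$ exactly cancels the linear part $k\delta_{k,l}\delta_{mn}\delta_{ij}$ of $\txtD\F^{(m,i;n,j)}_{k,l}(\bar\bX)$, leaving $\frac{1}{k}$ times the nonlinear convolution. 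The two mixed quadrants are straightforward and give either $\frac{1}{k}\txtD\F^{(m,i;n,j)}_{k,l}(\bar\bX)$ (for $k\geq K$, $l<K$) or $\sum_{k'<K}\A^{[K],(m,i;\cdot)}_{k,k'}\txtD\F^{(\cdot;n,j)}_{k',l}(\bar\bX)$ (for $k<K$, $l\geq K$).

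Next I split the supremum over $l$ at $l=4K-2$. Since $\bar\bX$ has support in $[0,K-1]$ per piece, the triple-convolution nonlinearity of $\txtD\F$ has support of width at most $3K-3$; thus for $l\leq 4K-2$ the column $\bC^{(m,i;n,j)}_{\cdot,l}$ has only finitely many nonzero entries, and its weighted sum is exactly $\fC^{(m,i;n,j)}_{\textnormal{finite}}$, computable with interval arithmetic. For $l\geq 4K-1$ the nonlinear window $[l-3K+2,l+3K-2]$ misses $\{1,\ldots,K-1\}$, so the only surviving low-$k'$ entries of $\txtD\F^{(\cdot;n,j)}_{\cdot,l}(\bar\bX)$ come from the boundary row $k'=0$, where by direct inspection of~\eqref{def:F_orbit} they equal $2(-1)^l\delta_{m',n}\delta_{i',j}-2\delta_{m',n+1}\delta_{i',j}$ (for $l\geq 1$). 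This collapses the $k<K$ slice of the column to $2(-1)^l\A^{(m,i;n,j)}_{k,0}-2\A^{(m,i;n+1,j)}_{k,0}$; summing in the $\nu$-weight and dividing by $\xi_l(\nu)=2\nu^l\geq 2\nu^K$ yields the first summand of $\fC^{(m,i;n,j)}_{\textnormal{tail}}$.

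The remaining $k\geq K$ slice of the tail column is purely the nonlinearity $-\frac{1}{k}\frac{t^{(m)}-t^{(m-1)}}{4}\bigl[\txtD\f^{(m,i)}_{k-1}/\txtD\bX^{(n,j)}_l-\txtD\f^{(m,i)}_{k+1}/\txtD\bX^{(n,j)}_l\bigr]$, which vanishes unless $m=n$. Identifying $\txtD_{(n,j)}\f^{(n,i)}(\bar\bX)$ as the Chebyshev multiplication operator by $\bu^{(n;i,j)}$, this shifted combination is exactly the matrix of multiplication by $\bu^{(n;i,j)}_\mp$, the $\mp$-shift being the Chebyshev integration identity $\int T_k=\tfrac{1}{2}(T_{k+1}/(k+1)-T_{k-1}/(k-1))$ already used to derive~\eqref{def:F_orbit}. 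The Banach-algebra inequality $\|\bu_\mp\ast\bv\|_\nu\leq\|\bu_\mp\|_\nu\|\bv\|_\nu$ bounds each column by $\|\bu_\mp\|_\nu\xi_l(\nu)$ in $\nu$-norm, and combined with $\frac{1}{k}\leq\frac{1}{K}$ this gives the second summand of $\fC^{(m,i;n,j)}_{\textnormal{tail}}$. Taking the maximum over the two $l$-regimes closes the block-wise bound, and summing in $n$ via~\eqref{eq:lin_op_norm_eta_nu} produces $Z_1^{(m)}$ as in~\eqref{def:Z1_orbit}. The one genuinely delicate step is the identification of the $\mp$-shift with multiplication by $\bu_\mp$; everything else is bookkeeping.
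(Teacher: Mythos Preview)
Your proof is correct and follows essentially the same approach as the paper: both reduce to block-wise $\ell^1_\nu$ operator norms via~\eqref{eq:lin_op_norm_eta_nu}, split the column supremum at $l=4K-2$, handle the finite columns by direct computation, and for the tail columns isolate the boundary-row ($k'=0$) contribution and the diagonal $\frac{1}{k}$-times-convolution contribution. The paper compresses the tail analysis into four bullet points listing the relevant matrix entries of $\A$ and $\txtD\F(\bar\bX)-\A^\dag$ for $s>4K-2$, whereas you spell out the block-diagonal structure of $\A$, $\A^\dag$ and the cancellation of the linear part more explicitly; your identification of the $\mp$-shift with Chebyshev multiplication by $\bu_\mp$ and the subsequent Banach-algebra bound is exactly what underlies the paper's last bullet (the extra $(\bu_\mp)_{k+s}$ term in the full multiplication matrix vanishes here because $k+s>4K-2$ exceeds the support of $\bu_\mp$, which is why the paper can write the entry simply as $(\bu_\mp)_{|k-s|}$).
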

\begin{proof}
We start from 
\begin{align*}
\left\Vert \left(\A(\txtD \F(\bar\bX)-\A^\dag)\right)^{(m)}\right\Vert_{\X_{\eta,\nu}\to\X_{\eta^{(m)},\nu}} &= \left\Vert \bC^{(m)} \right\Vert_{\X_{\eta,\nu}\to\X_{\eta^{(m)},\nu}} \\
&\leq \sum_{n=1}^M \left\Vert \bC^{(m;n)} \right\Vert_{\X_{\eta^{(n)},\nu}\to\X_{\eta^{(m)},\nu}}  \\
& \leq \sum_{n=1}^M \max_{1\leq j\leq 6} \frac{1}{\eta^{(n,j)}} \sum_{i=1}^6 \left\Vert \bC^{(m,i;n,j)} \right\Vert_\nu \eta^{(m,i)}
\end{align*}
and estimate each $\ell^1_\nu$ operator norm $\left\Vert \bC^{(m,i;n,j)} \right\Vert_\nu$. We do so by computing explicitly (with a computer and interval arithmetic) the norm of a finite number of columns of $\bC^{(m,i;n,j)}$, and by estimating by hand the norm of the remaining columns, as emphasized by the following splitting: 
\begin{align*}
\left\Vert \bC^{(m,i;n,j)} \right\Vert_\nu &= \sup_{l\in\N} \frac{1}{\xi_l(\nu)} \sum_{k\in\N} \left\vert \bC^{(m,i;n,j)}_{k,l} \right\vert \xi_k(\nu) \\
&= \max \left[\max_{0\leq l\leq 4K-2} \frac{1}{\xi_l(\nu)} \sum_{k\in\N} \left\vert \bC^{(m,i;n,j)}_{k,l} \right\vert \xi_k(\nu),  \sup_{l>4K-2} \frac{1}{\xi_l(\nu)} \sum_{k\in\N} \left\vert \bC^{(m,i;n,j)}_{k,l} \right\vert \xi_k(\nu) \right] \\
&= \max \left[\fC^{(m,i;n,j)}_{{\textnormal{finite}}},  \sup_{l>4K-2} \frac{1}{\xi_l(\nu)} \sum_{k\in\N} \left\vert \bC^{(m,i;n,j)}_{k,l} \right\vert \xi_k(\nu) \right].
\end{align*} 
By definition of $\F$, the only non-zero blocks in $\txtD \F(\bar\bX)$ are the blocks $(\txtD \F(\bar\bX)-\A^\dag)^{(m;n)}$ for $m=n$ and $m=n+1$. Therefore we have
\begin{align}
\label{eq:C_deomposed}
 \bC^{(m,i;n,j)} &= \sum_{l=1}^6 \A^{(m,i;n,l)}(\txtD \F(\bar\bX)-\A^\dag)^{(n,l;n,j)} \nonumber\\
 &\quad + \sum_{l=1}^6 \A^{(m,i;n+1,l)}(\txtD \F(\bar\bX)-\A^\dag)^{(n+1,l;n,j)},
\end{align}
the second term being $0$ if $n=M$. This yields
\begin{align}
\label{eq:estimate_Z1_tail}
\sup_{l>4K-2} \frac{1}{\xi_l(\nu)} \sum_{k\in\N} \left\vert \bC^{(m,i;n,j)}_{k,l} \right\vert \xi_k(\nu) \leq \fC^{(m,i;n,j)}_{{\textnormal{tail}}}.
\end{align}

Estimate~\eqref{eq:estimate_Z1_tail} comes from a meticulous but rather straightforward analysis of the various terms appearing in~\eqref{eq:C_deomposed} for the \emph{columns} of indices $l> 4K-2$, using that
\begin{itemize}
\item $\A^{(m,i;n,l)}_{k,s} = \frac{1}{k}\delta_{k,s}\delta_{i,l}\delta_{m,n}$ whenever $s\geq K$;
\item $(\txtD \F(\bar\bX)-\A^\dag)^{(n,l;n,j)}_{k,s}=0$ for all $s>4K-2$ and $1\leq k\leq K-1$;
\item $\left\vert (\txtD \F(\bar\bX)-\A^\dag)^{(n,j;n,j)}_{0,s}\right\vert = 1$ for all $s>4K-2$ and $\left\vert (\txtD \F(\bar\bX)-\A^\dag)^{(n+1,j;n,j)}_{0,s}\right\vert = 1$ for all $s>4K-2$ if $n<M$;
\item $\left\vert (\txtD \F(\bar\bX)-\A^\dag)^{(n,l;n,j)}_{k,s}\right\vert = \left\vert \frac{t^{(n)}-t^{(n-1)}}{4} \left(\bu_{\mp}^{(n;l,j)}\right)_{\vert k-s\vert} \right\vert $ for all $s>4K-2$, $k\geq 1$.
\end{itemize}
Estimates similar to~\eqref{eq:estimate_Z1_tail} were obtained previously in~\cite{BerDesLesMir15,BerShe15}.
\end{proof}

Again, we point out that each \emph{row} and \emph{column} of $\A$ and $\txtD \F(\bar\bX)-\A^\dag$ only have a finite number of non-zero coefficients, hence the quantities $\fC^{(m,i;n,j)}_{{\textnormal{finite}}}$ and $\fC^{(m,i;n,j)}_{{\textnormal{tail}}}$ involved in the definition of $Z_1$ can indeed be evaluated with a computer.

\begin{remark}
\label{rem:eta_orbit}
In practice, the larger term in $Z_1^{(m)}$ comes from
\begin{equation*}
\left\vert \fC^{(m;m)}\right\vert_{\eta^{(m)}}=\max_{1\leq j\leq 6} \frac{1}{\eta^{(m,j)}} \sum_{i=1}^6 \fC^{(m,i;m,j)} \eta^{(m,i)},
\end{equation*} 
and thus we choose $\eta^{(m)}$ so as to minimize this quantity, as explained in Proposition~\ref{prop:frob}.
\end{remark}

\subsubsection{The bounds $Z_k$, $2\leq k\leq 4$}

\begin{proposition}
\label{prop:Z2_orbit}
Let $\nu>1$, $\eta\in\R^{6M}_{>0}$ and $\bar \bX\in\R^{6MK}$ (again identified with its injection in $\X_{\eta,\nu}$). Consider $\F$, $\A^\dag$ and $\A$ as in~\eqref{def:F_orbit},~\eqref{def:Adag} and~\eqref{def:A}, and define for all $2\leq k\leq 4$ and all $1\leq m\leq M$
\begin{equation}
\label{def:Zk_orbit}
Z_k^{(m)}=\sum_{n=1}^M \max_{1\leq j_1,\ldots,j_k \leq 6} \sum_{i_1=1}^6 \frac{\eta^{(m,i_1)}}{\eta^{(n,j_1)}\ldots\eta^{(n,j_k)}}\sum_{i_2=1}^6 \left\Vert \A^{(m,i_1;n,i_2)}\right\Vert_\nu \left\Vert \txtD ^k_{(n,j_1;\ldots;n,j_k)}\F^{(n,i_2)}(\bar\bX)\right\Vert_\nu.
\end{equation}
Then, for all $2\leq k\leq 4$ and all $1\leq m\leq M$
\begin{equation*}
\left\Vert \left(\A \txtD ^k\F(\bar\bX))\right)^{(m)}\right\Vert_{\X_{\eta,\nu}\to\X_{\eta^{(m)},\nu}} \leq Z_k^{(m)}.
\end{equation*}
\end{proposition}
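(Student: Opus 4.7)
The plan is to follow Proposition~\ref{prop:Zk_manifold} and the block-decomposition technique of Proposition~\ref{prop:Z1_orbit}, adapted to a $k$-linear operator in the piecewise Chebyshev setting.

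The structural observation to start from is that $\F_0^{(m)}$ is linear in $\bar\bX$, and the linear term $k\bX_k^{(m)}$ in $\F_k^{(m)}$ contributes only to $\txtD\F$, so for $k\geq 2$ the operator $\txtD^k\F(\bar\bX)$ reduces to derivatives of $\f^{(m)}_{k\pm 1}$, each of which involves only the block-$m$ variables. Consequently $\txtD^k\F(\bar\bX)$ is block-diagonal in the outer (piecewise) index, i.e.,
\begin{equation*}
\left(\txtD^k\F(\bar\bX)[\bX_1,\ldots,\bX_k]\right)^{(n,i_2)} = \sum_{j_1,\ldots,j_k=1}^6 \txtD^k_{(n,j_1;\ldots;n,j_k)}\F^{(n,i_2)}(\bar\bX)\bigl[\bX_1^{(n,j_1)},\ldots,\bX_k^{(n,j_k)}\bigr].
\end{equation*}

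I would insert this decomposition into $(\A\txtD^k\F(\bar\bX))^{(m)}[\bX_1,\ldots,\bX_k]$ and apply, successively, the triangle inequality and the sub-multiplicativity of $\left\Vert\cdot\right\Vert_\nu$ under convolution (bounding each elementary $k$-linear operator by its $\ell^1_\nu$-operator norm times $\prod_l \left\Vert \bX_l^{(n,j_l)} \right\Vert_\nu$), together with the componentwise block bound~\eqref{eq:lin_op_norm_eta_nu} on $\A^{(m;n)}$. For unit $\bX_l \in \X_{\eta,\nu}$, this produces an upper bound indexed by $(n,i_1,i_2,j_1,\ldots,j_k)$ carrying the extra factors $\prod_l \left\Vert \bX_l^{(n,j_l)} \right\Vert_\nu$.

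The final step to recover the $\max$ over $(j_1,\ldots,j_k)$ appearing in~\eqref{def:Zk_orbit} is to write
\begin{equation*}
\prod_l \left\Vert \bX_l^{(n,j_l)} \right\Vert_\nu = \frac{1}{\prod_l \eta^{(n,j_l)}}\prod_l\bigl(\left\Vert \bX_l^{(n,j_l)} \right\Vert_\nu \eta^{(n,j_l)}\bigr),
\end{equation*}
pull out the worst such ratio via $\max_{j_1,\ldots,j_k}$, and exploit the factorization
\begin{equation*}
\sum_{j_1,\ldots,j_k=1}^6 \prod_{l=1}^k \bigl(\left\Vert \bX_l^{(n,j_l)} \right\Vert_\nu \eta^{(n,j_l)}\bigr) = \prod_{l=1}^k \left\Vert \bX_l^{(n)} \right\Vert_{\X_{\eta^{(n)},\nu}} \leq \prod_{l=1}^k \left\Vert \bX_l \right\Vert_{\X_{\eta,\nu}} \leq 1,
\end{equation*}
where the key inequality uses that $\left\Vert\cdot\right\Vert_{\X_{\eta,\nu}}$ is a $\max$ over pieces, not a sum. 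This bookkeeping — ensuring the $\max$ can be pulled outside the $j$-sums while leaving the $(n,i_1,i_2)$ sums intact — is the main obstacle. Once it is handled, supremizing over unit $\bX_l$ and summing in $n$ yields precisely $Z_k^{(m)}$. Computability is then immediate, since $\bar\bX$ and $\A$ have finitely many non-zero entries and $\txtD^k_{(n,j_1;\ldots;n,j_k)}\F^{(n,i_2)}(\bar\bX)$ is a finite linear combination of finite Chebyshev convolutions that interval arithmetic can handle.
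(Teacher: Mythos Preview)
Your proposal is correct and follows essentially the same approach as the paper, which simply states that the estimate ``follows readily from writing down what the norm of the multilinear operator $\left(\A \txtD ^k\F(\bar\bX)\right)^{(m)}$ is and simply using triangle inequalities.'' You have in fact spelled out the details the paper omits: the block-diagonality of $\txtD^k\F(\bar\bX)$ in the outer index for $k\geq 2$ (which is why only indices $(n,j_1),\ldots,(n,j_k)$ with a common $n$ appear in~\eqref{def:Zk_orbit}) and the factorization trick converting the $j$-sum into a max times $\prod_l\|\bX_l^{(n)}\|_{\X_{\eta^{(n)},\nu}}\leq 1$.
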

\begin{proof}
This estimate follows readily from writing down what the norm of the multilinear operator $\left(\A \txtD ^k\F(\bar\bX))\right)^{(m)}$ is and simply using triangle inequalities, which is a straightforward computation albeit being quite lengthy due to the intricate norm used on $\X_{\eta,\nu}$.
\end{proof}
Again, each of the involved $\ell^1_\nu$ operator norms can be evaluated explicitly and rigorously on a computer, using interval arithmetic.

\begin{remark}
A slightly less sharp bound, but faster to evaluate on a computer, is given by
\begin{align*}
&\left\Vert \left(\A \txtD ^k\F(\bar\bX)\right)^{(m)}\right\Vert_{\left(\X_{\eta,\nu}\right)^k\to \X_{\eta^{(m)},\nu}}  \\
&\leq \sum_{n=1}^M \max_{1\leq j_1,\ldots,j_k \leq 6} \sum_{i_2=1}^6 \sum_{i_1=1}^6 \frac{\eta^{(m,i_1)}}{\eta^{(n,i_2)}} \left\Vert \A^{(m,i_1;n,i_2)}\right\Vert_\nu \frac{\eta^{(n,i_2)}}{\eta^{(n,j_1)}\ldots\eta^{(n,j_k)}}\left\Vert \txtD ^k_{(n,j_1;\ldots;n,j_k)}\F^{(n,i_2)}(\bar\bX)\right\Vert_\nu  \\
&\leq \sum_{n=1}^M \left(\max_{1\leq i_2\leq 6}\sum_{i_1=1}^6 \frac{\eta^{(m,i_1)}}{\eta^{(n,i_2)}} \left\Vert \A^{(m,i_1;n,i_2)}\right\Vert_\nu\right)\left(\max_{1\leq j_1,\ldots,j_k \leq 6} \sum_{i_2=1}^6  \frac{\eta^{(n,i_2)}}{\eta^{(n,j_1)}\ldots\eta^{(n,j_k)}}\left\Vert \txtD ^k_{(n,j_1;\ldots;n,j_k)}\F^{(n,i_2)}(\bar\bX)\right\Vert_\nu\right)  \\
&= \sum_{n=1}^M \left\vert \left\Vert \A^{(m;n)}\right\Vert_{\nu}\right\vert_{\eta^{(n)}\to\eta^{(m)}} \left\vert \left\Vert \txtD ^k\F^{(n)}(\bar\bX)\right\Vert_\nu \right\vert_{\left(\eta^{(n)}\right)^k\to\eta^{(n)}}.
\end{align*}
\end{remark}

\subsection{Summary}

We recall that we explained in Section~\ref{sec:step_zeros} how to obtain a rigorous enclosure of minima and saddle points of $V$, in Section~\ref{sec:trapping} how to validate trapping regions around minima, and in Sections~\ref{sec:step_manifold} and~\ref{sec:manifold} how to validate local unstable manifold for the saddles. We now show how all the estimates derived in this section can be combined with Corollary~\ref{cor:F} to rigorously validate a connecting orbit between a saddle and a minimum.

\begin{theorem}
\label{th:recap_orbit}
Let $\hat p:[-1,1]\to \R^6$ be a validated parameterization of a local unstable manifold of a saddle, with a validation radius $\br$, as provided by Theorem~\ref{th:recap_manifold}. Let $(x_1,y_1)\in \R^2$ be a minimum of $V$, and $U\subset \R^2$ be a validated trapping region around $(x_1,y_1)$. Let $\nu>1$, $M\in\N_{\geq 1}$, $K\in\N_{\geq 1}$, $\tau>0$ and $\eta\in\R^{6M}_{>0}$. 

Let $\bar \bX = (\bar\bX_0,\ldots,\bar\bX_{K-1})\in \R^{6MK}$ and consider $\F$, $\A^\dag$ and $\A$ as defined in Section~\ref{sec:framework_orbit}. Consider also the bounds $Y$, $Z_0$, $Z_1$ and $Z_k$, $2\leq k\leq 4$ defined in~\eqref{def:Y_orbit}, \eqref{def:Z0_orbit}, \eqref{def:Z1_orbit} and \eqref{def:Zk_orbit} respectively. Finally, assume there exists $\brho>0$ such that, for all $1\leq m\leq M$, $P^{(m)}(\brho)<0$ and $Q^{(m)}(\brho)<0$, with $P^{(m)}$ and $Q^{(m)}$ defined in~\eqref{def:radii_pol_F}.

Then there exists $\bX\in\X_{\eta,\nu}$ satisfying
\begin{equation}
\label{eq:err_X}
\left\Vert \bX -\bar\bX \right\Vert_{\X_{\eta,\nu}} \leq \brho 
\end{equation}
such that the function $X$ defined as in~\eqref{eq:piecewise_cheb} satisfies $X'=f(X)$ on $[0,\tau]$ and such that $X(0)$ belongs to the unstable manifold of the saddle. Besides, if $(X^{(1)}(\tau),X^{(2)}(\tau))\in U$, this yields that the existence of a connecting orbit from the saddle to the minimum for the vector field $-\nabla V$.
\end{theorem}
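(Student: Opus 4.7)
The plan is to apply Corollary~\ref{cor:F} to the Newton-like operator $I_{\X_{\eta,\nu}} - \A\F$ built around the numerical approximation $\bar\bX$, and then to lift the resulting sequence-space zero of $\F$ to an actual heteroclinic orbit of $-\nabla V$ using Lemma~\ref{lem:equivalence} together with the trapping region furnished by Section~\ref{sec:trapping}.

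First I would check the hypotheses of Corollary~\ref{cor:F}. The product spaces $\XX^{(m)} = \YY^{(m)} = \X_{\eta^{(m)},\nu}$ are Banach; $\F$ is a polynomial map of degree $4$ since the extended vector field $f$ is quartic; and bounds $Y^{(m)}$, $Z_0^{(m)}$, $Z_1^{(m)}$, $Z_k^{(m)}$ ($k=2,3,4$) satisfying~\eqref{def:bounds_corF} are provided by~\eqref{def:Y_orbit} and Propositions~\ref{prop:Z0_orbit}, \ref{prop:Z1_orbit}, \ref{prop:Z2_orbit}. The injectivity of $\A$ required by the corollary has to be recorded separately: the infinite block acts by $\bX_k\mapsto \frac{1}{k}\bX_k$ for $k\geq K$ and is clearly injective, while the finite block $\A^{[K]}$ is invertible as a by-product of $Z_0^{(m)}<1$ (which is forced by $P^{(m)}(\brho)<0$). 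With the radii-polynomial conditions $P^{(m)}(\brho)<0$ and $Q^{(m)}(\brho)<0$ in place for every $m$, the corollary yields a unique zero $\bX$ of $\F$ in $\ball(\bar\bX,\brho)$, which is precisely the bound~\eqref{eq:err_X}.

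The second step is to translate this zero into a classical solution of the ODE. Because $\bX\in\X_{\eta,\nu}$ with $\nu>1$, on each subinterval $[t^{(m-1)},t^{(m)}]$ the Chebyshev coefficients decay geometrically, so the piecewise series~\eqref{eq:piecewise_cheb} defines an analytic, hence $\CC^1$, function. The equations $\F^{(m)}_k(\bX)=0$ for $k\geq 1$ are, by the standard integration identity for Chebyshev polynomials recalled in Section~\ref{sec:framework_orbit}, equivalent to $X'=f(X)$ on that subinterval; the equations $\F^{(m)}_0(\bX)=0$ for $2\leq m\leq M$ enforce continuity of $X$ at the interior nodes $t^{(m-1)}$; and $\F^{(1)}_0(\bX)=0$, together with the convention $\bX^{(0)}_0+2\sum_{k\geq 1}\bX^{(0)}_k=p(1)$, gives $X(0)=p(1)$. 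Gluing across subintervals produces a $\CC^1$ solution of $X'=f(X)$ on $[0,\tau]$ whose initial value $p(1)$ lies on the local unstable manifold of $X_0$ in the extended system by Theorem~\ref{th:recap_manifold}.

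For the final step, extending the trajectory backward through the parameterization $p$ yields an orbit of $f$ that converges to $X_0$ as $t\to-\infty$; since $z_0=\psi(x_0,y_0)$ by construction of $X_0$, hypothesis $(ii)$ of Lemma~\ref{lem:equivalence} is met, so $(x,y):=(X^{(1)},X^{(2)})$ solves the gradient system $(x',y')^\top=-\nabla V(x,y)$ on its interval of definition. If moreover $(X^{(1)}(\tau),X^{(2)}(\tau))\in U$, the trapping properties validated in Section~\ref{sec:trapping} force the gradient orbit to remain in $U$ for $t\geq \tau$ and to converge to $(x_1,y_1)$, yielding the desired heteroclinic connection from $(x_0,y_0)$ to $(x_1,y_1)$. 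The main obstacle in the overall plan is not this final assembly but rather the preparation of the bounds fed into Corollary~\ref{cor:F}, most notably the tail estimate for $Z_1^{(m)}$ that handles the columns of $\A(\txtD\F(\bar\bX)-\A^\dag)$ beyond index $4K-2$; once those estimates are in hand from the preceding propositions, Theorem~\ref{th:recap_orbit} follows by the steps above.
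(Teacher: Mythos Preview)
Your proposal is correct and matches the paper's intended argument. The paper does not give an explicit proof of Theorem~\ref{th:recap_orbit}; it is presented as a summary statement whose justification is simply ``combine the bounds from Propositions~\ref{prop:Z0_orbit}--\ref{prop:Z2_orbit} with Corollary~\ref{cor:F}'', and you have filled in precisely those details (including the injectivity of $\A$, the passage from the sequence-space zero to a $\CC^1$ solution via the Chebyshev identities, and the use of Lemma~\ref{lem:equivalence}~$(ii)$ and the trapping region to close the argument).
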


Notice that~\eqref{eq:err_X} gives a control of the error in phase space, namely
\begin{equation*}
\left\vert X^{(i)}(t) -\bar X^{(i)}(t) \right\vert \leq \frac{\brho}{\eta^{(m,i)}},\quad \forall~t^{(m-1)} \leq t \leq t^{(m)}.
\end{equation*}

We give in Section~\ref{sec:results} several examples (with explicit values of the parameters) of applications of Theorem~\ref{th:recap_orbit} to validate a MEP for the M\"uller-Brown potential.

\section{Conclusions}
\label{sec:conclusions}

\subsection{Results for the M\"uller-Brown potential}
\label{sec:results}

We use Theorem~\ref{th:recap_manifold} and Theorem~\ref{th:recap_orbit} to validate a MEP for the M\"uller-Brown potential. This MEP connects two minima through two saddles and one extra minimum (see Figure~\ref{fig:zeros}), and is thus composed of four connecting orbits:
\begin{center}
Min$_1$ $\longleftarrow$ Sad$_1$ $\longrightarrow$ Min$_2$ $\longleftarrow$ Sad$_2$ $\longrightarrow$ Min$_3$.
\end{center}

\begin{figure}[htbp]
\centering
\includegraphics[width=\linewidth]{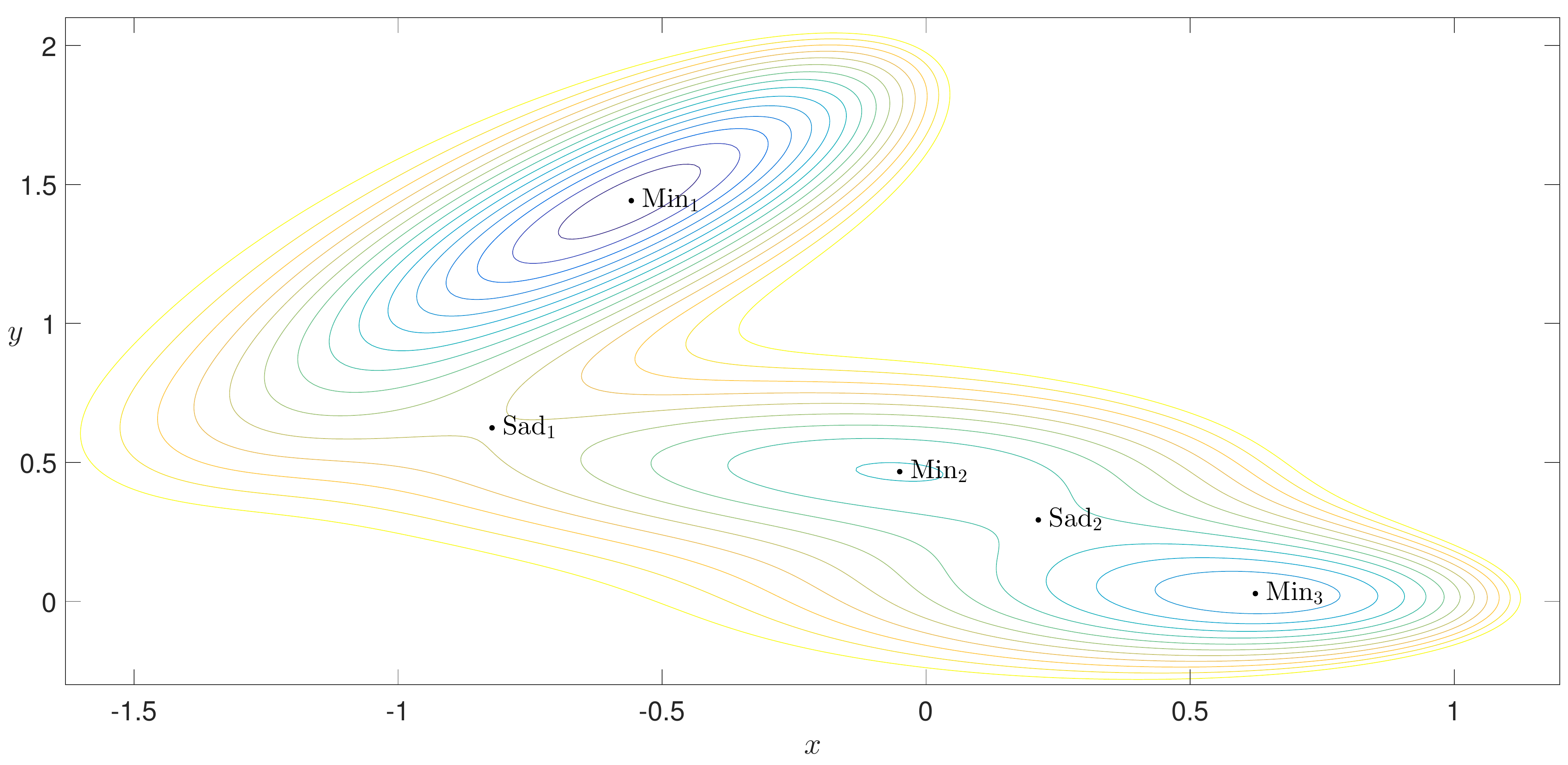}
\caption{The minima and saddles that \emph{organize} the MEP for the M\"uller-Brown potential, represented in the $(x,y)$ phase space.}
\label{fig:zeros}
\end{figure}

To validate this MEP, we follow the main steps described in Section~\ref{sec:main_steps}. We now make precise for each step the values of the various parameters that we use and present the output of the whole procedure.

\paragraph{(I) Locate and validate saddles and minima.}
Using the method exposed in Section~\ref{sec:step_zeros}, we obtain the following validated enclosure for the saddles and minima.
\begin{equation*}
\text{Min}_1=  \begin{pmatrix}
0.558223634633024 \\
1.441725841804669
\end{pmatrix} 
\pm 2.3\times 10^{-16}, \quad
\text{Min}_2=  \begin{pmatrix}
-0.050010822998206 \\
0.466694104871972
\end{pmatrix} 
\pm 3.1\times 10^{-16},
\end{equation*}
\begin{equation*}
\text{Min}_3=  \begin{pmatrix}
0.623499404930877 \\
0.028037758528686
\end{pmatrix} 
\pm 6.7\times 10^{-16},
\end{equation*}
\begin{equation*}
\text{Sad}_1=  \begin{pmatrix}
-0.822001558732732 \\
0.624312802814871
\end{pmatrix} 
\pm 6.7\times 10^{-16}, \quad
\text{Sad}_2=  \begin{pmatrix}
0.212486582000662 \\
0.292988325107368
\end{pmatrix} 
\pm 6.4\times 10^{-16}.
\end{equation*}

\paragraph{(II) Introduce an equivalent polynomial vector field.} 
This step was already fully detailed in Section~\ref{sec:polynomial_reformulation}.

\paragraph{(IIIa) Compute and validate the local unstable manifold of the saddles.}
We use here the method outlined in Section~\ref{sec:step_manifold} and detailed in Section~\ref{sec:manifold}. First, we obtain a validated enclosure for the unstable eigenvalue of each saddle, and for an associated eigenvector (in the extended phase space):
\begin{equation*}
\lambda_1=750.8626628392770 \pm 2.2\times 10^{-10}, \quad \lambda_2=735.2472621113654 \pm 2.3\times 10^{-10},
\end{equation*}
and
\begin{equation*}
v_1= \begin{pmatrix}
-0.004365095236381 \\
   0.003716635857366 \\
   0.009144362933730 \\
   0.715690888528226 \\
  -0.697810674544410 \\
  -0.027024576917547 
\end{pmatrix} \pm 1.6\times 10^{-12},\quad 
v_2=\begin{pmatrix}
-0.001396858271947 \\
   0.002417454704800 \\
   0.746139634477447 \\
  -0.660215837640495 \\
  -0.000000003332255 \\
  -0.085923793504697
\end{pmatrix} \pm 1.1\times 10^{-12}.
\end{equation*}
Then, we fix a scaling for the eigenvector, namely $\gamma_1=5$ for the first one and $\gamma_2=15$, and for each saddle point solve recursively the invariance equation~\eqref{eq:invariance_coeff} for a finite number of coefficients. Using interval arithmetic, we compute $N_1=20$ (resp. $N_2=30$) coefficients for the parameterization of the unstable manifold of Sad$_1$ (resp. Sad$_2$). The values of the coefficients can be found in the file \texttt{coeff\_para}. 

\begin{remark}
We recall that $\gamma_1$ and $\gamma_2$ act as scaling parameters for the parameterization coefficients $\p_1=(\p_1)_n$ and $\p_2=(\p_2)_n$. Their value were chosen so as to get a reasonable decay of the coefficients. Indeed, we want the last few computed coefficients to be of an order close to machine precision, so that the validation (i.e. proving that the missing tail is close to $0$ by applying Theorem~\ref{th:recap_manifold}) is likely to succeed, but we don't want to many of them too be to small, because then they don't contain much information on the manifold. For more details on how this choice can be done in an optimized way (also with regard to the computational cost), see~\cite{BreLesMir16}.
\end{remark}

Finally, we use Theorem~\ref{th:recap_manifold} to validate these two truncated parameterization. As explained in Remark~\ref{rem:eta_manifold}, we use Proposition~\ref{prop:frob} to find weights $\eta\in\R^6_{>0}$ giving an optimal $Z_1$ bound:
\begin{equation*}
\eta_1=\begin{pmatrix}
0.233475274346582 \\
   0.972240260296501 \\
   0.012217320748587 \\
   0.002816090862926 \\
   0.008991277632639 \\
   0.000368998496530
\end{pmatrix},\quad 
\eta_2=\begin{pmatrix}
0.013610077886795 \\
   0.999830877794874 \\
   0.002743858348424 \\
   0.001900890855733 \\
   0.011908138216216 \\
   0.000188911061966
\end{pmatrix}.
\end{equation*}

With these parameters we manage to validate both manifolds, that is we find validation radiuses $\br_1=1.024225153462953\times 10^{-17}$ and $\br_2=1.975153579406591\times 10^{-18} $ for which the radii polynomials $P$ and $Q$ are negative. See Figure~\ref{fig:with_orbit} for a representation of the validated manifolds in (the original two-dimensional) phase space.

\paragraph{(IIIb) Validate trapping regions around the minima.}
This step is more straightforward. Using the procedure described in Section~\ref{sec:trapping}, we prove using \textsc{Intlab} that for each of the three minimum, the square of length $0.02$ centered at the minimum is included in the stable manifold of the minimum. See Figure~\ref{fig:with_orbit} for a representation of this trapping regions in phase space.

\paragraph{(IIIc) Compute and validate orbits connecting unstable manifolds and trapping regions.}
First, we numerically compute orbits starting from the end of the validated unstable manifold, to find after which integration time the orbit enters the trapping region validated at the previous step. This yields $\tau_{11}=0.015$, $\tau_{12}=0.025$, $\tau_{22}=0.012$ and $\tau_{23}=0.018$. In all this paragraph, indices $_{ij}$ are used to denote a quantity related to the orbit from Sad$_i$ to Min$_j$. Then, we compute a piece-wise Chebyshev series representation of these approximate solutions. We use a uniform subdivision of size $M_{11}=M_{12}=M_{22}=M_{23}=10$, and on each subdivision use $K_{11}=30$ (resp. $K_{12}=K_{22}=K_{23}=20$) Chebyshev coefficients. The values of the coefficients can be found in the file \texttt{coeff\_orbit}. Then, we use Theorem~\ref{th:recap_orbit} to validate these four approximate orbits. As explained in Remark~\ref{rem:eta_orbit}, we use Proposition~\ref{prop:frob} to find weights $\eta\in\R^{6M}_{>0}$ giving an optimal $Z_1$ bound. The exact value of these weights can be found in the file \texttt{weights\_orbit}. With the weights $\nu_{11}=1.4$, $\nu_{12}=1.5$, $\nu_{22}=1.7$ and $\nu_{23}=1.5$ for the $\ell^1_\nu$ norms, we manage to validate the four orbits, that is we find validation radiuses $\brho_{11}=8.568033048134257\times 10^{-11}$, $\brho_{12}=1.493891911294753\times 10^{-11}$, $\brho_{22}=5.071535766518771\times 10^{-10}$ and $\brho_{11}=6.190811910154036\times 10^{-10}$ for which the radii polynomials $P^{(m)}$ and $Q^{(m)}$ are all negative. See Figure~\ref{fig:with_orbit} for a representation of the validated orbits in (the original two-dimensional) phase space. Finally, as suggested by Figure~\ref{fig:with_orbit}, we check using interval arithmetic and taking into account the validation radiuses $\rho_{ij}$ that each orbit indeed ends in the validated trapping region, which concludes the validation of each connecting orbit, and thus of the MEP. All the validation estimates can be obtained by running the script \texttt{script\_main} available at~\cite{BreKue18_code}, where the files \texttt{coeff\_para}, \texttt{coeff\_orbit} and \texttt{weights\_orbit} can also be found.

\begin{figure}[htbp]
\centering
\includegraphics[width=\linewidth]{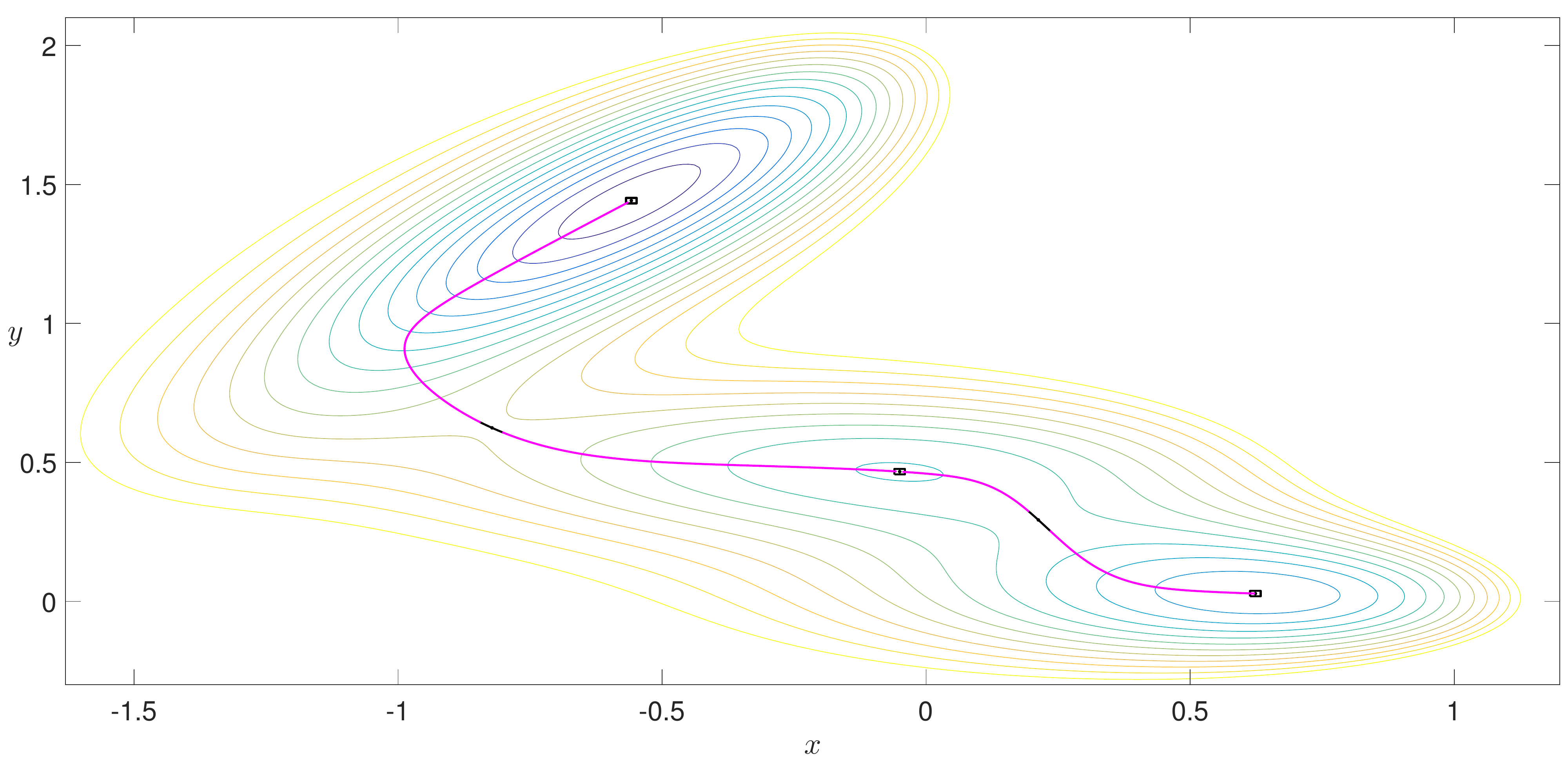}
\caption{The validated MEP represented in the $(x,y)$ phase space. The black parts of the orbits near each saddle represent the validated unstable manifolds, whereas the magenta parts are the one validated using piece-wise Chebyshev series. The small boxes around each minimum are the validated trapping regions.}
\label{fig:with_orbit}
\end{figure}

\subsection{Possible generalizations}
\label{sec:generalizations}

To make the exposition as clear as possible, most of the technical estimates presented were tailored for the M\"uller-Brown example. We end this paper by discussing to what extent the different parts of the method described can be generalized.

Our first comment concerns the gradient structure, and we explain here how to generalize our approach to non-gradient systems. As the reader may have noticed, nowhere in the a posteriori validation process for heteroclinic orbits did we use that the original problem had a gradient structure (in fact we even lost that feature through the polynomial reformulation). However, the gradient structure is crucial to ensure that heteroclinic paths of~\eqref{eq:ODE} actually correspond to MEP of~\eqref{eq:SDE}. For general SODEs of the form~\eqref{eq:SDE1} where $f$ is not a gradient, this may no longer be true, especially for MAPs going upward~\cite{HeyVan08bis} (i.e. from a minimum to a saddle). To solve this issue, one can use a different approach to solve the minimization problem~\eqref{eq:minimization}, namely look at Hamilton's equations of motion associated to the Lagrangian~\eqref{eq:FW1} (see e.g.~\cite{GraSchVan17}). For this extended system heteroclinic paths again correspond to MAPs, even in the non-gradient case, and the whole machinery presented in Sections~\ref{sec:main_steps} to~\ref{sec:connection} is then applicable.

Our second comment concerns the different behaviors that can result of having saddle points of Morse index greater than one (see for instance~\cite{CamKohVan11}). For systems of dimension three or more, besides \emph{saddle to minimum} connections a MEP could contain \emph{saddle to saddle} connections. Our approach generalizes with only small modifications to such cases. Indeed, using the method described in Section~\ref{sec:main_steps} one can compute and validate a local stable manifold of the saddle where the orbit arrives. This stable manifold then plays the role of the trapping region of the minimum in Section~\ref{sec:connection}, and one can validate the saddle to saddle heteroclinic orbit by rigorously solving the boundary value problem between the stable and the unstable manifold (we again refer to~\cite{Mir17} for more details).  As mentioned in~\cite{CamKohVan11}, saddle points of Morse index greater than one can also lead to situations where MEPs are not unique (not even locally), because some heteroclinic connections come as a continuous families. Geometrically this corresponds to situations where the intersection between the unstable manifold of a saddle and the stable manifold of a minimum (or a saddle) is more than one-dimensional. It is not completely clear to us what one would actually want to compute in such situation, but from the a posteriori validation point of view there would be at least two available options. The first one would be to recover local uniqueness by focusing on a specific orbit among the family, for instance by only computing and validating submanifolds of the stable/unstable manifolds corresponding to \emph{fast} directions. Another option would be to parametrize such family of heteroclinic orbits (for instance by the \emph{exit point} on the unstable manifold) and use a parameter-dependent version of the Banach fixed point theorem to validate the whole family of orbits (see for instance~\cite{BreLesVan13,GamLesPug16,Les18,Plu95} and the references therein).

Our last comment concerns the practical applicability (mainly from a computational cost point of view) of our approach to high dimensional systems. First, let us mention that, to the best of our knowledge, there has never been an attempt to apply this kind of a posteriori validation techniques to high dimensional system of ODEs. Nonetheless, such techniques were successfully used to study a different kind of high dimensional systems, namely 3D PDEs~\cite{BerWil18}. Therefore, we believe that, with some implementation effort, our approach could be adapted to handle higher dimensional systems. However, there is one aspect of the method described in this paper that is clearly ill-behaved with respect to the dimension: the polynomial reformulation. As explained in Section~\ref{sec:polynomial_reformulation}, this reformulation allowed us to simplify the presentation (and the implementation) but at the cost of an increase in the dimension. While this trade-off is acceptable for low dimensional examples, it limits the applicability for higher dimensional ones. An alternative approach could be to keep the initial (non-polynomial) nonlinearities, and extend our method (both from the estimates and from the implementation side) to handle these  terms. Such an extension would require substantial, but probably within reach, modifications. Indeed rigorous numerics techniques similar to those used in this work were already successfully applied directly to systems with non-polynomial linearities (see e.g.~\cite{BreHorMcKPlu06,DayKal13,Yam98}).

\paragraph*{Acknowledgments:} MB and CK have been supported by a Lichtenberg Professorship of the VolkswagenStiftung. We also thank two referees for their helpful comments improving the presentation of our results.


\end{document}